\providecommand{\U}[1]{\protect\rule{.1in}{.1in}}
\newtheorem{theorem}{Theorem}[section]
\newtheorem{definition}[theorem]{Definition}
\newtheorem{lemma}[theorem]{Lemma}
\newtheorem{proposition}[theorem]{Proposition}
\newtheorem{remark}[theorem]{Remark}
\newenvironment{proof}[1][Proof]{\noindent\textbf{#1.} }{\phantom{1}\hfill\rule{0.5em}{0.5em}\medskip}
\numberwithin{equation}{section}
\newcommand{\Addresses}{{
  \bigskip
  \footnotesize

(I.~Fonseca) \textsc{Department of Mathematical Sciences, Carnegie Mellon University, Pittsburgh PA 15213-3890, USA}\par\nopagebreak
  \textit{E-mail address}: \texttt{fonseca@andrew.cmu.edu}

  \medskip

(G.~Leoni) \textsc{Department of Mathematical Sciences, Carnegie Mellon University, Pittsburgh PA 15213-3890, USA}\par\nopagebreak
  \textit{E-mail address}: \texttt{giovanni@andrew.cmu.edu}

  \medskip

(M.G.~Mora) \textsc{Dipartimento di Matematica, Universit\`a di Pavia, via Ferrata 1, 27100 Pavia, Italy}\par\nopagebreak
  \textit{E-mail address}: \texttt{mariagiovanna.mora@unipv.it}

}}
\begin{document}

\title{A Second Order Minimality Condition for a Free-Boundary Problem}
\author{Irene Fonseca
\and Giovanni Leoni
\and Maria Giovanna Mora}

\maketitle

\begin{abstract}
\noindent
The goal of this paper is to derive in the two-dimensional case necessary and
sufficient minimality conditions in terms of the second variation for the
functional
\[
v\mapsto\int_{\Omega}\big(|\nabla v|^{2}+\chi_{\{v>0\}}Q^{2}%
\big)\,d\boldsymbol{x},
\]
introduced in a classical paper of Alt and Caffarelli. For a special choice of
$Q$ this includes water waves. The second variation is obtained by computing the second derivative of the functional along suitable variations of the free boundary. It is proved that the strict positivity of the second variation gives a sufficient condition for local minimality. Also, it is shown that smooth critical points are local minimizers in a small tubular neighborhood of the free-boundary. 

\

\noindent
2010 \textit{Mathematics Subject Classification:} 35R35, 49J40
\end{abstract}

\bigskip

\section{Introduction}

The goal of this paper is to derive a new minimality condition in terms of the
second variation for the functional
\begin{equation}
\mathcal{F}(v):=\int_{\Omega}\big(|\nabla v|^{2}+\chi_{\{v>0\}}Q^{2}%
\big)\,d\boldsymbol{x},\quad v\in\mathcal{A}_{0},\label{functional F}%
\end{equation}
introduced by Alt and Caffarelli in the seminal paper \cite{alt-caffarelli81}
(see also \cite{ACF1}, \cite{ACF2}, \cite{ACF3}, \cite{caffarelli-salsa-book05},
\cite{Friedman book}). Here, $\Omega\subset\mathbb{R}^{N}$ is an open
connected set with locally Lipschitz boundary, the function $Q:\Omega
\rightarrow[0,+\infty)$ is continuous, and
\[
\mathcal{A}_{0}:=\left\{  v\in H_{\operatorname*{loc}}^{1}(\Omega):\,\nabla
v\in L^{2}(\Omega;\mathbb{R}^{N}),~v=v^{\ast}\text{ on }S\right\}  ,
\]
where $S\subset\partial\Omega$ is a measurable set with $\mathcal{H}%
^{n-1}(  S)  >0$, and the Dirichlet datum $v^{\ast}\in
H_{\operatorname*{loc}}^{1}(\Omega)$ is a nonnegative function with $\nabla
v^{\ast}\in L^{2}(\Omega;\mathbb{R}^{N})$. The identity $v=v^{\ast}$ on $S$ is
to be understood in the sense of traces.

In this paper a critical point for \eqref{functional F} is a function $v\in\mathcal{A}_{0}$ such that $\mathcal{F}(v)\in\mathbb{R}$ and 
\begin{equation}\label{critical}
\frac{d \mathcal{F}}{d\varepsilon}(v+\varepsilon\varphi)\Big{\vert}_{\varepsilon=0}=0
\end{equation}
for every $\varphi\in H^1(\Omega)$ with $\varphi=0$ on $S$ in the sense of traces. It can be shown that when $v$ is a smooth critical point, e.g., $v\in C^2(\overline{\Omega})$, and  the free boundary $\Omega\cap\partial\left\{  v>0\right\}  $ is a manifold of class $C^2$, then we are led to a free boundary problem (see \cite{KinderS}). To be precise, 
the Euler-Lagrange equations of (\ref{functional F}) are given by 
\begin{equation}%
\begin{cases}
\Delta v=0 & \text{in }\Omega\cap\left\{  v>0\right\}  ,\\
v=0,\quad\left\vert \nabla v\right\vert =Q & \text{on }\Omega\cap
\partial\left\{  v>0\right\}  ,\\
v=v^{\ast} & \text{on }S,
\end{cases}
\label{E-L}%
\end{equation}
(see the Appendix).

Under the assumptions that $Q$ is H\"{o}lder continuous and%
\begin{equation}
0<Q_{\min}\leq Q(  \mathbf{x})  \leq Q_{\max}<\infty,
\label{Qbounds}%
\end{equation}
Alt and Caffarelli \cite{alt-caffarelli81} proved existence of global
minimizers, full regularity of the free boundary $\Omega\cap\partial\left\{
v>0\right\}  $ of local minimizers for $N=2$ and partial regularity for
$N\geq3$. Using a monotonicity formula, Weiss in \cite{weiss 99} improved the
estimate of the Hausdorff dimension of the singular set, and Caffarelli,
Jerison, and Kenig \cite{CJK} showed full regularity in dimension $N=3$. Note,
however, that in dimension $N=3$ there exist critical points of
(\ref{functional F}) whose free boundary is singular (see
\cite{alt-caffarelli81} and \cite{CJK}).

In this work we prove that in dimension $N=2$ and under the assumption
\eqref{Qbounds}, smooth critical points of (\ref{functional F}) are actually
local minimizers with respect to small $C^{2,\alpha}$ perturbations (see the statement of Theorem \ref{thm stability} for the precise notion of minimality) in a tubular neighborhood of $\partial\{u>0\}\cap\Omega$. The
proof is based on the derivation of a second order variation of the
functional (\ref{functional F}). 

This approach has been successfully applied to several contexts. In particular, in the study of the Mumford-Shah functional the strict positivity of the second variation has been used to obtain local minimality of critical points (see \cite{BM}, \cite{CMM}, \cite{C}, \cite{MM}), including triple junctions, which are at the core of the Mumford-Shah conjecture.
Furthermore, using the diffuse-interface Ohta-Kawasaki energy to model microphase separation in diblock copolymer melts, critical configurations with positive second variation were found to be local minimizers in \cite{AFM} (see also \cite{BC1}, \cite{BC2}). In turn, these results are used to determine global and local
minimality of certain lamellar configurations.
Finally, in  \cite{FM} (see also \cite{B}) the authors analyzed a variational model for epitaxial growth of a thin elastic film over a flat substrate when
a lattice mismatch between the two materials is present. Again using techniques involving the positivity of the second variation, they determined the
critical threshold for local and global minimality of the flat configuration. 

We now present the main results of this paper. We assume $\Omega
:=(-1,1)\times(0,\infty)$, and we consider the functional $\mathcal{F}$ in
(\ref{functional F}) defined on the class%
\begin{align}
\mathcal{A}:=\big\{v\in L_{\operatorname*{loc}}^{1}(\Omega):\ \nabla v\in
L^{2}(\Omega;\mathbb{R}^{2}),\ v(x,0)=u^{\ast}(x)\text{ } &  \text{for }%
x\in(-1,1),\nonumber\\
v(-1,y) &  =v(1,y)\text{ for }y>0\big\},\label{class admissible}%
\end{align}
where $\mathbf{x}=(x,y)
\in\mathbb{R}^{2}$, $u^{\ast}\in C^{1}([-1,1])$ is periodic, and $u^{\ast}>0$, while the function $Q$ satisfies
\begin{equation}
Q\in C^{0,1}(\Omega),\quad 0\leq Q(\boldsymbol{x})\leq Q_{\max}\quad\text{for every
}\boldsymbol{x}\in\Omega.\label{function Q}%
\end{equation}
By Theorem~1.3 in \cite{alt-caffarelli81} (see also \cite{AL}), there exists a minimizer of
$\mathcal{F}$ in $\mathcal{A}$. Moreover, in view of Lemma~2.4 in
\cite{alt-caffarelli81}, for any local minimizer $v$ of $\mathcal{F}$ in
$\mathcal{A}$, the set $\{v>0\}\cap\Omega$ is open and $v$ is harmonic in
$\{v>0\}\cap\Omega$. Let $u\in\mathcal{A}$ be such that the set
\begin{equation}
\Omega_{+}:=\{u>0\}\cap\Omega\label{Omega plus}%
\end{equation}
is open, $u$ is harmonic in $\Omega_{+}$, and
\begin{equation}
\Gamma:=\partial\{u>0\}\cap\Omega\label{Gamma}%
\end{equation}
is a smooth curve. Then $u$ satisfies the elliptic problem%
\begin{equation}%
\begin{cases}
\Delta u=0 & \text{in }\Omega_{+},\\
u=0 & \text{on }\Gamma,\\
u=u^{\ast} & \text{on }\{y=0\}\cap\partial\Omega_{+},
\end{cases}
\label{equation u}%
\end{equation}
together with the periodicity conditions
\begin{equation}
u(-1,y)=u(1,y)\quad\text{for }y>0.\label{periodicity u}%
\end{equation}
We consider a one-parameter family of diffeomorphisms $\{\Phi_{s}%
\}_{s\in\lbrack0,1]}$ that coincide with the identity in a uniform
neighborhood of $\partial\Omega$. We then derive the second derivative of
$\mathcal{F}(u_{s})$ with respect to $s$, where $u_{s}$ is the minimizer of
the Dirichlet energy on $\Phi_{s}(\{u>0\})$ with respect to the given boundary
conditions. Imposing the first derivative of $\mathcal{F}(u_{s})$ to be zero
at $s=0$ gives back the equilibrium condition $\left\vert \nabla u\right\vert
=Q$ on $\Omega\cap\partial\left\{  u>0\right\}  $. The second order derivative
of $\mathcal{F}(u_{s})$ provides a new necessary condition
for minimizers, expressed in terms of a sign condition for a quadratic form
(see Remark~\ref{remark minimality} below). In turn, the strict positivity of
this quadratic form gives a sufficient condition for local minimality. This is
made precise by the following theorem, which is one of the main results of
this paper. 

In what follows, we denote by $\nu:\Gamma\rightarrow\mathbb{S}^{1}$ a smooth normal vector
to $\Gamma$. The curvature $\kappa$ of $\Gamma$ satisfies $\partial_{\tau}%
\nu=\kappa\tau$ and $\partial_{\tau}\tau=-\kappa\nu$, where $\tau
:\Gamma\rightarrow\mathbb{S}^{1}$ is a smooth tangent vector to~$\Gamma$. 

\begin{theorem}
\label{theorem main} 
Assume that $Q\in C^{1,1}(\Omega)$ and satisfies (\ref{function Q}).
Let $u\in C^{2,\alpha}(\Omega_{+}\cup\Gamma)$, $\alpha
>0$, satisfy (\ref{Omega plus}), (\ref{equation u}), (\ref{periodicity u}), and
let the free boundary $\Gamma$ given in (\ref{Gamma}) be the graph of a
$C^{3}$ periodic function. Assume, in addition,
that
\begin{equation}
(\partial_{\nu}u)^{2}=Q^{2}\quad\text{on }\Gamma\label{u stat}%
\end{equation}
and that there exists $C_{0}>0$ such that
\begin{equation}
\int_{\Omega_{+}}2|\nabla u_{\psi}|^{2}\,d\boldsymbol{x}+\int_{\Gamma
}(\partial_{\nu}Q^{2}+2\kappa Q^{2})\psi^{2}\,d\mathcal{H}^{1}\geq
C_{0}\left\Vert \psi\right\Vert _{H^{1/2}(\Gamma)}^{2}\label{coercivity}%
\end{equation}
for every $\psi\in C_{c}^{1}(\Gamma)$, where $u_{\psi}$ is the solution to
\[%
\begin{cases}
\Delta u_{\psi}=0 & \text{in }\Omega_{+},\\
u_{\psi}=Q\psi & \text{on }\Gamma,\\
u_{\psi}=0 & \text{on }\{y=0\}\cap\partial\Omega_{+},%
\end{cases}
\]
with $u_{\psi}(-1,y)=u_{\psi}(1,y)$ for all $y$ such that $(\pm1,y)\in
\overline{\Omega_{+}}$. Then there exists $\delta>0$  such that for every open set
$U\Subset\Omega$ and for every diffeomorphism $\Phi\in C^{2,\alpha
}(\mathbb{R}^{2};\mathbb{R}^{2})$ with
\begin{equation}
\operatorname*{supp}(\Phi-\operatorname*{Id})\subset U\label{m1}%
\end{equation}
and
\begin{equation}
\left\Vert \Phi-\operatorname*{Id}\right\Vert _{C^{2,\alpha}(\mathbb{R}^{2}%
)}\leq\delta,\label{m2}%
\end{equation}
we have%
\[
\mathcal{F}(u)\leq\mathcal{F}(v)
\]
for every $v\in\mathcal{A}$ with $\{v>0\}=\Phi(\{u>0\})$.
\end{theorem}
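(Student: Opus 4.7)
The plan is to parametrize perturbations by a one-parameter family of diffeomorphisms, reduce the estimate to a Taylor expansion, and exploit (\ref{coercivity}) uniformly along the resulting path. The first step is a reduction to harmonic competitors: if $v\in\mathcal{A}$ satisfies $\{v>0\}=\Phi(\Omega_+)$, then the Dirichlet integral $\int_{\Phi(\Omega_+)}|\nabla v|^2\,d\boldsymbol{x}$ is minimized (subject to the same boundary data transported through $\Phi$) by the unique harmonic extension $\tilde u$. Since the surface term $\int\chi_{\{v>0\}}Q^2\,d\boldsymbol{x}$ depends only on $\Phi(\Omega_+)$, we obtain $\mathcal{F}(v)\geq \mathcal{F}(\tilde u)$, so it suffices to prove $\mathcal{F}(u)\leq \mathcal{F}(\tilde u)$ for every $\Phi$ as in (\ref{m1})--(\ref{m2}).

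Next, I would introduce the straight-line homotopy $\Phi_s:=\operatorname{Id}+s(\Phi-\operatorname{Id})$ for $s\in[0,1]$. When $\delta$ is small, each $\Phi_s$ is a $C^{2,\alpha}$ diffeomorphism agreeing with the identity outside $U$. Let $u_s$ denote the harmonic extension on $\Omega_s^+:=\Phi_s(\Omega_+)$ with the transported boundary data, and set $g(s):=\mathcal{F}(u_s)$. The first variation formula (derived earlier in the paper) together with (\ref{u stat}) yields $g'(0)=0$. Taylor's theorem with integral remainder then gives
\[
g(1)-g(0)=\int_0^1(1-s)\,g''(s)\,ds,
\]
so the conclusion follows provided $g''(s)\geq 0$ for all $s\in[0,1]$ when $\delta$ is small enough.

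The quadratic form $g''(s)$ should be expressible as the natural analogue of the left-hand side of (\ref{coercivity}) with $\Gamma$, $\Omega_+$, $\nu$, $\kappa$ replaced by their counterparts $\Gamma_s:=\Phi_s(\Gamma)$, $\Omega_s^+$, $\nu_s$, $\kappa_s$, and $\psi$ replaced by the normal velocity $\psi_s:=\bigl((\Phi-\operatorname{Id})\cdot\nu_s\bigr)\circ\Phi_s^{-1}$, plus lower-order terms that are $O(\delta)$ multiples of $\|\psi_s\|_{H^{1/2}(\Gamma_s)}^2$. The key claim is then a continuity/stability statement: the $H^{1/2}$-coercivity assumed at $s=0$ is preserved under small $C^{2,\alpha}$ perturbations of the free boundary. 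I would establish this by pulling the quadratic form on $\Omega_s^+$ back to $\Omega_+$ via $\Phi_s^{-1}$ and using that all relevant operators (harmonic extension, Dirichlet-to-Neumann map on $\Gamma$, curvature) depend continuously in the appropriate norms on the diffeomorphism in $C^{2,\alpha}$. Combined with (\ref{coercivity}) and an equivalence of the $H^{1/2}$-norms on $\Gamma_s$ and $\Gamma$ (with constants independent of $s$ for $\delta$ small), this yields $g''(s)\geq\tfrac{C_0}{2}\|\psi_s\|_{H^{1/2}(\Gamma_s)}^2\geq 0$.

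The principal obstacle is precisely this uniform coercivity step: controlling the error terms in $g''(s)$ --- arising from differentiation of the change-of-variables factors, from the coupling between the harmonic extension and the moving domain, and from the dependence of $\kappa_s$, $\nu_s$, and $\partial_{\nu_s}Q^2$ on $s$ --- in the same $H^{1/2}$-norm in which (\ref{coercivity}) is stated. This demands sharp elliptic estimates stable under $C^{2,\alpha}$-perturbations of the domain, together with a careful diffeomorphism-invariant formulation of the second variation. Granting these technical inputs, the rest of the argument reduces to routine bookkeeping.
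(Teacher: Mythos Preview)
Your overall strategy---reduce to harmonic competitors, connect by a flow, use Taylor's formula with integral remainder, and show $g''(s)\geq 0$ via a perturbed coercivity---matches the paper's. The genuine gap is in the choice of flow and in your claim that the remainder terms in $g''(s)$ are $O(\delta)\|\psi_s\|_{H^{1/2}(\Gamma_s)}^2$.

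The exact second-variation formula along any admissible flow (Theorem~\ref{thm:vars}) is
\begin{align*}
g''(s) &= \int_{\Phi_s(\Omega_+)}2|\nabla\dot u_s|^2\,d\boldsymbol{x} + \int_{\Gamma_s}\big(\partial_{\nu_s}Q^2+2\kappa_s(\partial_{\nu_s}u_s)^2\big)(X_s\cdot\nu_s)^2\,d\mathcal{H}^1\\
&\quad + \int_{\Gamma_s}(Q^2-|\nabla u_s|^2)\big(Z_s\cdot\nu_s - 2(X_s\cdot\tau_s)\,\partial_{\tau_s}(X_s\cdot\nu_s) + \kappa_s|X_s|^2\big)\,d\mathcal{H}^1.
\end{align*}
For your straight-line interpolation one has $Z_s=0$, but the last integral still contains $(X_s\cdot\tau_s)\,\partial_{\tau_s}(X_s\cdot\nu_s)$ and $\kappa_s(X_s\cdot\tau_s)^2$. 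The factor $Q^2-|\nabla u_s|^2$ is indeed $O(\|\varphi\|_{C^{2,\alpha}})$, where $\varphi$ is the \emph{normal} displacement of $\Gamma$; but the tangential velocity $X_s\cdot\tau_s$ is not controlled by $\psi_s=X_s\cdot\nu_s$. For a diffeomorphism $\Phi$ whose deviation from the identity is predominantly tangential to $\Gamma$, the quantity $\|X_s\cdot\tau_s\|_{L^2}$ can be arbitrarily large compared with $\|\psi_s\|_{H^{1/2}}$, and $\partial_{\tau_s}(X_s\cdot\nu_s)$ is an $H^1$-scale object in $\psi_s$, not $H^{1/2}$. These terms are therefore \emph{not} $O(\delta)\|\psi_s\|_{H^{1/2}}^2$ in general, and the absorption into the coercive part fails.

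The paper's resolution is precisely this point: instead of the linear interpolation, it constructs (Section~\ref{section diff}, Theorem~\ref{theorem diffeomorphism}) a flow $\{\Phi_s\}$ joining $\Gamma$ to the perturbed graph with the additional property $X_s\cdot\tau_s\equiv 0$ on $\Gamma_s$ for \emph{every} $s\in[0,1]$. This kills the dangerous cross-term and reduces $|X_s|^2$ to $(X_s\cdot\nu_s)^2$. The construction amounts to solving a first-order PDE by the method of characteristics; the regularity of the resulting flow is delicate (the underlying hitting-time function is actually discontinuous at zeros of $\varphi$), occupies the Appendix, and is the reason the paper first restricts to polynomial perturbations $\varphi$ and only afterwards passes to general $C^{2,\alpha}$ perturbations by approximation. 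The price paid is that now $Z_s\neq 0$, and a separate argument (Proposition~\ref{proposition tangential zero}, using integration by parts together with the $C^1$ bound on $Q^2-|\nabla u_s|^2$ along $\Gamma_s$ from Proposition~\ref{proposition estimates us}) shows that $\int_{\Gamma_s}(Q^2-|\nabla u_s|^2)Z_s\cdot\nu_s\,d\mathcal{H}^1$ is bounded by $C\|\varphi\|_{C^{2,\alpha}}\int_{\Gamma_s}(X_s\cdot\nu_s)^2\,d\mathcal{H}^1$. Your continuity/stability heuristic for the first two integrals is essentially correct and is made precise in Propositions~\ref{proposition estimate 1} and~\ref{proposition estimate 2}; but it does not touch the third integral, which is where the real difficulty lies.
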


Although the notion of $C^{2,\alpha}$-minimality established in the previous theorem may be perceived as weak,
it has been shown to lead to a stronger minimality property in several of the contexts mentioned above. To be precise, in the case of epitaxial growth Fusco and Morini \cite{FM} proved that the strict positivity
of the second variation implies local minimality with respect to $W^{2,\infty}$ perturbations and in turn, that this leads to
local minimality with respect to $L^\infty$ perturbations. Similarly, for the diffuse-interface Ohta-Kawasaki energy it is shown in \cite{AFM} that
the strict positivity of the second variation yields local minimality with respect to $W^{2,p}$ perturbations and that $W^{2,p}$-local minimizers
are actually $L^1$-local minimizers. Thus, it is natural to expect that in our setting $C^{2,\alpha}$-minimizers are in fact local minimizers in a much larger
class of competitors. This will be addressed in a forthcoming paper.

We also observe that a different type of second variation for the functional
(\ref{functional F}) has been used by Caffarelli, Jerison, and Kenig in
\cite{CJK} to prove full regularity of global minimizers when $N=3$, and by
Weiss and Zhang in \cite{wz} for a similar functional related to water waves with vorticity. In contrast to
our case, where we perform variations of the free boundary~$\Gamma$, in
\cite{CJK} and \cite{wz} the variations are of the type $u+\varepsilon v$,
where $v$ is harmonic in $\Omega_{+}\cap B$ with boundary datum a given
function $g$ on $\partial(\Omega_{+}\cap B)$ and $B$ is a ball.

In the second main theorem we prove that, if $u$ is a smooth critical point of
$\mathcal{F}$ restricted to $\mathcal{A}$, then $u$ satisfies the minimality
property of Theorem~\ref{theorem main} in a tubular neighborhood of $\Gamma$.

\begin{theorem}
\label{thm stability} Assume that $Q$ satisfies (\ref{function Q}) and $Q\geq Q_{\mathrm{min}}>0$. Let $u\in C^{2,\alpha}(\Omega_{+})$ be as in
(\ref{Omega plus})--(\ref{periodicity u}), and let $\Gamma$ be the graph of a
$C^{3}$ periodic function. Assume, in addition, that
\[
(\partial_{\nu}u)^{2}=Q^{2}\quad\text{on }\Gamma.
\]
Then there exist $\varepsilon>0$ and $c_{\varepsilon}>0$ such that
\begin{equation}
\int_{U_{\varepsilon}\cap\{u>0\}}2|\nabla u_{\psi}|^{2}\,d\boldsymbol{x}%
+\int_{\Gamma}(\partial_{\nu}Q^{2}+2\kappa Q^{2})\psi^{2}\,d\mathcal{H}%
^{1}\geq c_{\varepsilon}\Vert\psi\Vert_{H^{1/2}(\Gamma)}^{2}\label{coerc Ue}%
\end{equation}
for every $\psi\in C_{c}^{1}(\Gamma)$, where $U_{\varepsilon}$ is the
intersection of $\Omega$ with the $\varepsilon$-tubular neighborhood of
$\Gamma$. In particular, if $Q\in C^{1,1}(\Omega)$ then there exists $\delta
_{\varepsilon}>0$ such that for every open set $V_{\varepsilon}\Subset U_{\varepsilon}$
and for every diffeomorphism $\Phi\in C^{2,\alpha}(\mathbb{R}%
^{2};\mathbb{R}^{2})$ with
\[
\operatorname*{supp}(\Phi-\operatorname*{Id})\subset V_{\varepsilon}%
\quad\text{and}\quad\left\Vert \Phi-\operatorname*{Id}\right\Vert
_{C^{2,\alpha}(\mathbb{R}^{2})}\leq\delta_{\varepsilon},
\]
we have%
\[
\int_{U_{\varepsilon}}\big(|\nabla u|^{2}+\chi_{\{u>0\}}Q^{2}%
\big)\,d\boldsymbol{x}\leq\int_{U_{\varepsilon}}\big(|\nabla v|^{2}%
+\chi_{\{v>0\}}Q^{2}\big)\,d\boldsymbol{x}%
\]
for every $v\in L_{\operatorname*{loc}}^{1}(U_{\varepsilon})$ such that
$\nabla v\in L^{2}(U_{\varepsilon};\mathbb{R}^{2})$, $v=u$ on $\partial
U_{\varepsilon}\cap\Omega_{+}$, $v(-1,y)=v(1,y)$ for all $y$ such that
$(\pm1,y)\in\partial U_{\varepsilon}$, and $\{v>0\}=\Phi(\{u>0\})$.
\end{theorem}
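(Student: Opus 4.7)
My plan is to prove the coercivity estimate (\ref{coerc Ue}) first, and then deduce the localized minimality by adapting the argument of Theorem~\ref{theorem main} to the tube $U_\varepsilon$. For the coercivity, the strategy is a G{\aa}rding estimate combined with a compactness/contradiction argument. Since $\Gamma$ is a $C^3$ graph and the periodicity of $\Omega$ makes $\Gamma$ a closed curve on the cylinder, I would introduce tubular coordinates $(s,t)$ near $\Gamma$, in which $U_\varepsilon \cap \Omega_+$ becomes a strip $\Gamma \times (0,\varepsilon)$ and the Laplace operator is a small perturbation of $\partial_{ss}^2 + \partial_{tt}^2$. A Fourier decomposition in the tangential variable $s$ applied to the flat Poisson-extension model for $u_\psi$ yields the multiplier $|\xi|(1 - e^{-2\varepsilon|\xi|}) \geq |\xi|/2 - C_\varepsilon$ for the restricted Dirichlet energy $\int_0^\varepsilon |\nabla u_\psi|^2\,dt$. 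Combined with $Q \geq Q_{\min} > 0$ (so that $\|Q\psi\|_{H^{1/2}}$ and $\|\psi\|_{H^{1/2}}$ are comparable modulo $L^2$) and the trivial lower bound $-C\|\psi\|_{L^2(\Gamma)}^2$ for the boundary term, this gives the G{\aa}rding-type estimate
\[
\int_{U_\varepsilon \cap \Omega_+} 2|\nabla u_\psi|^2 \,d\boldsymbol{x} + \int_\Gamma (\partial_\nu Q^2 + 2\kappa Q^2)\psi^2\,d\mathcal{H}^1 \geq c\|\psi\|_{H^{1/2}(\Gamma)}^2 - C_\varepsilon\|\psi\|_{L^2(\Gamma)}^2.
\]

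To upgrade this to full coercivity, I argue by contradiction. If (\ref{coerc Ue}) fails for every $c_\varepsilon > 0$, I extract a sequence $\psi_n$ with $\|\psi_n\|_{H^{1/2}(\Gamma)} = 1$ and vanishing quadratic form; by the G{\aa}rding bound and the compact embedding $H^{1/2}(\Gamma) \hookrightarrow L^2(\Gamma)$, a subsequence converges weakly in $H^{1/2}$ and strongly in $L^2$ to some $\psi_\ast \neq 0$ for which the quadratic form is nonpositive. Viewing $\psi_\ast$ as a minimizer of the quadratic form among $L^2$-normalized functions, its Euler--Lagrange equation reads (up to a Lagrange multiplier) $2\partial_\nu u_{\psi_\ast} + (\partial_\nu Q^2 + 2\kappa Q^2)\psi_\ast/Q = 0$ on $\Gamma$. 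Combined with $\Delta u_{\psi_\ast} = 0$ in $\Omega_+$, $u_{\psi_\ast} = Q\psi_\ast$ on $\Gamma$, the identity $\partial_\nu Q^2 + 2\kappa Q^2 = \partial_\nu(Q^2 - |\nabla u|^2)$ at $\Gamma$ (which follows from the Euler--Lagrange condition $(\partial_\nu u)^2 = Q^2$ and $\Delta u = 0$ via the Frenet relations stated above the theorem), and a Rellich--Pohozaev-type identity, this overdetermined system should be rigid enough to force $u_{\psi_\ast} \equiv 0$, hence $\psi_\ast = 0$, contradicting $\|\psi_\ast\|_{L^2(\Gamma)} > 0$. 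Once (\ref{coerc Ue}) holds, the minimality follows from a localized version of the proof of Theorem~\ref{theorem main}: for $\Phi$ with $\operatorname*{supp}(\Phi - \operatorname*{Id}) \subset V_\varepsilon \Subset U_\varepsilon$, I connect $\operatorname*{Id}$ to $\Phi$ by a smooth path $\Phi_s$, define $u_s$ as the harmonic function on $\Phi_s(\Omega_+) \cap U_\varepsilon$ matching $u$ on $\partial U_\varepsilon \cap \Omega_+$, and Taylor-expand $s \mapsto \int_{U_\varepsilon}(|\nabla u_s|^2 + \chi_{\Phi_s(\Omega_+)} Q^2)\,d\boldsymbol{x}$ to second order: the first-order term vanishes by the criticality of $u$ together with the fact that $\partial U_\varepsilon \cap \Omega_+$ is fixed by $\Phi_s$, the second-order term is bounded below through (\ref{coerc Ue}) by $(c_\varepsilon/2)\|\psi\|_{H^{1/2}(\Gamma)}^2$ (with $\psi$ the normal component of the first-order perturbation of $\Gamma$), and the remainder is $O(\|\Phi - \operatorname*{Id}\|_{C^{2,\alpha}}^3)$, hence absorbed by choosing $\delta_\varepsilon$ small.

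The main obstacle I foresee is the rigidity step that forces $\psi_\ast = 0$ in the contradiction argument. The G{\aa}rding estimate alone cannot distinguish critical points that are local minima from saddle points, so one must use the Euler--Lagrange condition $(\partial_\nu u)^2 = Q^2$ in an essential way---most likely through a Rellich--Pohozaev identity specific to the two-dimensional geometry and exploiting the interpretation of the boundary coefficient as $\partial_\nu(Q^2-|\nabla u|^2)$---to rule out a nontrivial kernel for the quadratic form in the tube. A secondary technical point is to verify that the harmonic-extension machinery in tubular coordinates accommodates the $C^3$ regularity of $\Gamma$ and the $C^{0,1}$ regularity of $Q$ without losing the Fourier-multiplier estimate.
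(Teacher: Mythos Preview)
Your G{\aa}rding estimate and the localized minimality argument in the last paragraph are both on the right track, but the rigidity step you flag as the main obstacle is in fact a dead end. The theorem does \emph{not} assert coercivity for every $\varepsilon>0$; it only claims that \emph{some} small $\varepsilon$ works. For a generic critical point $u$ (which need not be a local minimizer of $\mathcal{F}$ on $\Omega_+$), the quadratic form on a large tube---or on all of $\Omega_+$---can genuinely have negative directions, so no Rellich--Pohozaev identity will force $\psi_\ast=0$ at fixed $\varepsilon$. The Euler--Lagrange condition $(\partial_\nu u)^2=Q^2$ alone carries no second-order information.

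The paper's approach avoids this entirely by exploiting the freedom to shrink $\varepsilon$. The key observation (Lemma~\ref{lm2}) is that
\[
\mu_\varepsilon:=\inf\Big\{\int_{U_\varepsilon\cap\{u>0\}}|\nabla u_\psi|^2\,d\boldsymbol{x}:\ \|\psi\|_{L^2(\Gamma)}=1\Big\}\longrightarrow\infty\quad\text{as }\varepsilon\to0^+,
\]
proved by a short compactness argument: extend $u_{\psi_n}$ by zero to a fixed tube, pass to a weak $H^1$ limit, and use that the limit vanishes a.e.\ while its trace has unit $L^2$ norm. In your Fourier model this is the fact that the Dirichlet-to-Neumann multiplier on a strip of width $\varepsilon$ is $|\xi|\coth(|\xi|\varepsilon)\geq 1/\varepsilon$, not $|\xi|(1-e^{-2\varepsilon|\xi|})$; the correct multiplier is bounded below by both $|\xi|$ and $1/\varepsilon$. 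Splitting the Dirichlet energy in half, one piece controls $|\hat\psi|_{H^{1/2}(\Gamma)}^2$ (your G{\aa}rding term, via Lemma~\ref{lm1}) and the other contributes $\mu_\varepsilon\|\psi\|_{L^2(\Gamma)}^2$, which for $\varepsilon$ small swallows the negative $L^2$ term $-C_3\|\psi\|_{L^2}^2$ coming from $\int_\Gamma(\partial_\nu Q^2+2\kappa Q^2)\psi^2$ and from converting $|\hat\psi|_{H^{1/2}}$ to $|\psi|_{H^{1/2}}$ (here $Q\geq Q_{\min}>0$ is used). No contradiction argument on $\psi$ is needed. Your outline for the second part---repeating the proof of Theorem~\ref{theorem main} with $\Omega_+$ replaced by $U_\varepsilon\cap\{u>0\}$---is exactly what the paper does.
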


The constant $c_{\varepsilon}$ in \eqref{coerc Ue} depends strongly on $Q_{\min}$. This is not surprising, since
the hypothesis $Q_{\min}>0$ is fundamental for the
regularity of local minimizers. When $Q_{\min}=0$ one expects the free
boundary to present singularities at points where $Q(\mathbf{x})
=0$. Indeed, in dimension $N=2$ and when $Q(  x,y)  =\sqrt{(
q-2gy)  _{+}}$, where $q$ is a physical constant related to the hydraulic head and $g$ is the gravitational
acceleration, the free boundary problem (\ref{E-L}) is related to Stokes
waves of greatest height, which are characterized by the
fact that their shape is not regular but has a sharp crest of included angle
$\frac{2}{3}\pi$ (see, e.g., \cite{AL}, \cite{CSS}, \cite{CoStrauss}, \cite{milne-thomson1960book}, \cite{plotnikov-toland05}, \cite{ST}, \cite{stokes1847},
\cite{stokes1880}, \cite{toland14}, \cite{V}, \cite{VarWeiss}, \cite{VW}, \cite{weiss 99}, \cite{wz}, and the references therein). 

This paper is organized as follows. In Section~\ref{section second variation}
we give the precise definition of admissible flows and derive the second
variation of the functional (\ref{functional F}).
In Section~\ref{section diff}, given a small perturbation of $\Gamma$, we
construct an admissible flow (see Definition~\ref{definition admissible flow})
joining $\Gamma$ to the perturbed free boundary and with zero tangential
velocity on the free boundaries. This latter property will play a crucial role
in the proofs of the main theorems, and it leads to a first order partial
differential equation (see (\ref{i equiv}) below) that we solve using the
method of characteristics. One of the main difficulties is that the components
of the flow are given by compositions of functions that are discontinuous.
Thus, proving the regularity of the flow is extremely delicate and it will be
carried out in the appendix. 
In Section~\ref{section proof main} we prove Theorem~\ref{theorem main}. To
control the second variation along the flow we use sharp Schauder estimates
together with the zero tangential velocity of the flow.
Finally, in Section~\ref{section proof stability} we prove Theorem~\ref{thm stability}.

\section{The Second Variation}

\label{section second variation}

In this section we derive the second variation of $\mathcal{F}$ on some
suitable variations of $u$ that are constructed along a family of variations
of $\Gamma$ according to the following definition.

\begin{definition}
\label{definition admissible flow} We say that $\{\Phi_{s}\}_{s\in[0,1]}$ is
an \emph{admissible flow} if it satisfies the following conditions:

\begin{itemize}
\item[(i)] the map $(s,\boldsymbol{x})\mapsto\Phi_{s}(\boldsymbol{x})$ belongs
to $C^{2}([0,1]\times\overline{\Omega};\mathbb{R}^{2})$;

\item[(ii)] for every $s\in[0,1]$, the map $\Phi_{s}$ is a diffeomorphism from
$\overline{\Omega}$ onto itself;

\item[(iii)] $\Phi_{0}=\operatorname*{Id}$ in $\overline{\Omega}$;

\item[(iv)] there exists an open set $U$, compactly contained in $\Omega$,
such that $\operatorname*{supp}(\Phi_{s}-\operatorname*{Id})\subset U$ for all
$s\in[0,1]$.
\end{itemize}
\end{definition}

Let $\{\Phi_{s}\}_{s\in\lbrack0,1]}$ be an admissible flow, and let $u$ be as
in Theorem~\ref{theorem main}. For every $s\in\lbrack0,1]$ we consider the
solution $u_{s}$ of the problem%
\begin{equation}%
\begin{cases}
\Delta u_{s}=0 & \text{in }\Phi_{s}(\Omega_{+}),\\
u_{s}=0 & \text{on }\Phi_{s}(\Gamma),\\
u_{s}=u & \text{on }\partial\Phi_{s}(\{y=0\}\cap\partial\Omega_{+}),
\end{cases}
\label{equation u t}%
\end{equation}
with $u_{s}(-1,y)=u_{s}(1,y)$ for all $y$ such that $(\pm1,y)\in\Phi
_{s}(\overline{\Omega_{+}})$. Note that, in view of property (iv) in
Definition~\ref{definition admissible flow}, we have that
\[
\partial\Phi_{s}(\{y=0\}\cap\partial\Omega_{+})=\{y=0\}\cap\partial\Omega_{+}%
\]
and $(\pm1,y)\in\Phi_{s}(\overline{\Omega_{+}})$ if and only if $(\pm
1,y)\in\overline{\Omega_{+}}$. Moreover, extending $u_{s}$ by $0$ outside
$\Phi_{s}(\Omega_{+})$, we obtain $u_{s}\in\mathcal{A}$.

In what follows, for every $s\in\lbrack0,1]$ and $\boldsymbol{x}\in\Omega$\ we
denote by $\dot{u}_{s}(\boldsymbol{x})$ the partial derivative with respect to
$r$ of the function $(r,\boldsymbol{x})\mapsto u_{r}(\boldsymbol{x})$
evaluated at $(s,\boldsymbol{x})$, that is,
\begin{equation}
\dot{u}_{s}(\boldsymbol{x}):=\frac{\partial u_{r}}{\partial r}(\boldsymbol{x}%
)\Big|_{r=s}. \label{u dot}%
\end{equation}
We define
\begin{equation}
X_{s}:=\dot{\Phi}_{s}\circ\Phi_{s}^{-1},\quad Z_{s}:=\ddot{\Phi}_{s}\circ
\Phi_{s}^{-1} \label{X and Z}%
\end{equation}
for every $s\in\lbrack0,1]$, where%
\begin{equation}
\dot{\Phi}_{s}:=\frac{\partial\Phi_{r}}{\partial r}\Big|_{r=s},\quad\ddot
{\Phi}_{s}:=\frac{\partial^{2}\Phi_{r}}{\partial r^{2}}\Big|_{r=s}.
\label{Phi dot}%
\end{equation}
Moreover, we set $\Gamma_{s}:=\Phi_{s}(\Gamma)$ and denote by $\tau_{s}$ and
$\nu_{s}$ the tangent and normal vector to $\Gamma_{s}$ given by
\begin{equation}
\tau_{s}:=\frac{(D\Phi_{s})\tau}{|(D\Phi_{s})\tau|}\circ\Phi_{s}^{-1},\quad
\nu_{s}:=\frac{(D\Phi_{s})^{-T}\nu}{|(D\Phi_{s})^{-T}\nu|}\circ\Phi_{s}^{-1}.
\label{tangent and normal}%
\end{equation}
Finally, $\kappa_{s}$ denotes the curvature of $\Gamma_{s}$.

The proof of the following proposition follows {the arguments in \cite{CMM}.}

\begin{proposition}
\label{proposition regularity u hat s}Let $u\in C^{2}(\Omega_{+}\cup\Gamma)$
satisfy (\ref{Omega plus})--(\ref{periodicity u}), let $\{\Phi_{s}%
\}_{s\in\lbrack0,1]}$ be an admissible flow, and let $\hat{u}_{s}:=u_{s}%
\circ\Phi_{s}$, where $u_{s}$ solves (\ref{equation u t}). Then the map
\[
s\mapsto\hat{u}_{s}%
\]
belongs to $C^{1}([0,1];H^{1}(\Omega_{+}))$. In particular, the function
$\dot{u}_{s}$ in (\ref{u dot}) is well-defined and is the unique solution to
the boundary value problem
\begin{equation}%
\begin{cases}
\Delta\dot{u}_{s}=0 & \text{in }\Phi_{s}(\Omega_{+}),\\
\dot{u}_{s}=-(X_{s}\cdot\nu_{s})\,\partial_{\nu_{s}}u_{s} & \text{on }%
\Gamma_{s},\\
\dot{u}_{s}=0 & \text{on }\partial\Phi_{s}(\{y=0\}\cap\partial\Omega_{+}),
\end{cases}
\label{udot eq}%
\end{equation}
with $\dot{u}_{s}(-1,y)=\dot{u}_{s}(1,y)$ for all $y$ such that $(\pm
1,y)\in\Phi_{s}(\overline{\Omega_{+}})$.
\end{proposition}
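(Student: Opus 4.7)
The plan is to work in the fixed reference domain $\Omega_+$ by setting $\hat{u}_s := u_s \circ \Phi_s$ and pulling back the elliptic problem (\ref{equation u t}) through $\Phi_s$. A direct change of variables shows that $\hat{u}_s$ solves
\[
\operatorname{div}(A_s \nabla \hat{u}_s) = 0 \quad \text{in } \Omega_+,
\]
where $A_s := (\det D\Phi_s)(D\Phi_s)^{-1}(D\Phi_s)^{-T}$, subject to the $s$-independent boundary conditions $\hat{u}_s = 0$ on $\Gamma$, $\hat{u}_s = u^{\ast}$ on $\{y=0\}\cap\partial\Omega_+$, together with the periodicity relations. All of the $s$-dependence has been transferred into the coefficient matrix $A_s$, which belongs to $C^1([0,1]; L^\infty(\Omega_+;\mathbb{R}^{2\times 2}))$ since $(s,\boldsymbol{x}) \mapsto \Phi_s(\boldsymbol{x})$ is of class $C^2$, and which is uniformly elliptic on $[0,1]$ because each $\Phi_s$ is a diffeomorphism.

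After subtracting a fixed $H^1$-lifting $\ell$ of the Dirichlet datum, one characterises $w_s := \hat{u}_s - \ell$ as the zero of the mapping
\[
F : [0,1] \times H \to H^{\ast}, \qquad F(s,w) := -\operatorname{div}(A_s \nabla(w+\ell)),
\]
where $H$ is the closed subspace of $H^1(\Omega_+)$ consisting of functions with vanishing trace on the Dirichlet part of $\partial\Omega_+$ and satisfying the periodic trace conditions. The map $F$ is of class $C^1$ jointly in $(s,w)$, and $\partial_w F(s, w_s) : w \mapsto -\operatorname{div}(A_s \nabla w)$ is an isomorphism from $H$ onto $H^{\ast}$ by Lax-Milgram. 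The implicit function theorem therefore yields that $s \mapsto \hat{u}_s$ is of class $C^1$ from $[0,1]$ into $H^1(\Omega_+)$, so that the pointwise derivative $\dot{u}_s$ in (\ref{u dot}) is a well-defined element of $H^1(\Phi_s(\Omega_+))$.

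To identify the boundary value problem for $\dot{u}_s$, differentiate the identity $u_r(\Phi_r(q)) = \hat{u}_r(q)$ in $r$ at $r = s$, obtaining
\[
\dot{u}_s \circ \Phi_s + (\nabla u_s \circ \Phi_s) \cdot \dot{\Phi}_s = \partial_s \hat{u}_s,
\]
that is, $\dot{u}_s = \partial_s \hat{u}_s \circ \Phi_s^{-1} - \nabla u_s \cdot X_s$ on $\Phi_s(\overline{\Omega_+})$. Since $\hat{u}_s \equiv 0$ on $\Gamma$ for every $s$, one has $\partial_s \hat{u}_s = 0$ on $\Gamma$; since $u_s$ vanishes on $\Gamma_s$, its tangential gradient vanishes and $\nabla u_s = (\partial_{\nu_s} u_s)\,\nu_s$ on $\Gamma_s$, giving $\dot{u}_s = -(X_s \cdot \nu_s)\,\partial_{\nu_s} u_s$ on $\Gamma_s$. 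On $\{y=0\}\cap\partial\Omega_+$ the flow is the identity by Definition~\ref{definition admissible flow}(iv), hence $X_s \equiv 0$ and $\dot{u}_s = 0$, while the periodicity of $\dot{u}_s$ is inherited from that of $\hat{u}_s$. Harmonicity of $\dot{u}_s$ in $\Phi_s(\Omega_+)$ is then obtained by pushing forward the equation satisfied by $\partial_s \hat{u}_s$ (derived by differentiating the weak formulation in $s$), or equivalently by differentiating $\Delta u_r = 0$ pointwise at interior points. Uniqueness is immediate from the maximum principle.

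The main technical point is to ensure that $\nabla u_s$ has a well-defined trace on $\Gamma_s$ so that the boundary datum of $\dot{u}_s$ is unambiguous; this is where the hypothesis $u \in C^2(\Omega_+ \cup \Gamma)$, combined with Schauder estimates applied to the pulled-back equation on $\Omega_+$ (whose boundary is smooth modulo the periodic identification), guarantees that $u_s \in C^1(\Phi_s(\Omega_+) \cup \Gamma_s)$. Beyond this, the argument is standard and follows the strategy laid out in \cite{CMM}.
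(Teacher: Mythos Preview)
Your proposal is correct and follows essentially the same strategy as the paper: pull the problem back to the fixed domain $\Omega_+$ via $\hat u_s=u_s\circ\Phi_s$, apply the implicit function theorem to the resulting divergence-form equation with $C^1$-in-$s$ coefficients to obtain $s\mapsto\hat u_s\in C^1([0,1];H^1(\Omega_+))$, and then read off the boundary datum for $\dot u_s$ by differentiating $u_s\circ\Phi_s|_\Gamma\equiv 0$ and using $\nabla u_s=(\partial_{\nu_s}u_s)\nu_s$ on $\Gamma_s$. The only cosmetic differences are that the paper sets up the IFT as a map $V\to V$ by composing with $\Delta^{-1}$ (and applies it at $s=0$, invoking the analogous argument for general $s$), whereas you map $H\to H^\ast$ directly and invoke Lax--Milgram at every $s$; and the paper establishes harmonicity of $\dot u_s$ by testing against functions supported away from $\Gamma$ and differentiating $\int\nabla u_s\cdot\nabla v=0$, which is a cleaner justification than ``differentiating $\Delta u_r=0$ pointwise''.
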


\begin{proof}
For simplicity, we only prove the result in a neighborhood of $s=0$. The
general case can be obtained analogously. In view of (\ref{equation u t}) a
straightforward computation shows that $\hat{u}_{s}$ satisfies
\begin{equation}%
\begin{cases}
\operatorname{div}(A_{s}\nabla\hat{u}_{s})=0 & \text{in }\Omega_{+},\\
\hat{u}_{s}=u & \text{on }\Gamma\cup(\{y=0\}\cap\partial\Omega_{+}),
\end{cases}
\label{s3}%
\end{equation}
with $\hat{u}_{s}(-1,y)=\hat{u}_{s}(1,y)$ for all $(\pm1,y)\in\overline
{\Omega_{+}}$, where
\begin{equation}
A_{s}:=\left(  \frac{D\Phi_{s}^{-1}(D\Phi_{s}^{-1})^{T}}{\det D\Phi_{s}^{-1}%
}\right)  \circ\Phi_{s}.\label{As}%
\end{equation}
Let $V$ be the subspace of all functions $v\in H^{1}(\Omega_{+})$ such that
$v=0$ on $\Gamma\cup(\{y=0\}\cap\partial\Omega_{+})$ and $v(-1,y)=v(1,y)$ for
all $(\pm1,y)\in\overline{\Omega_{+}}$. For every $s\in\lbrack0,1]$ and $v\in
V$ let $\mathcal{H}(s,v)$ be the unique weak solution $w\in V$ of the
Poisson's equation%
\[
\Delta w=\operatorname{div}(A_{s}\nabla(v+u))\quad\text{in }\Omega_{+}.
\]
Then
\[
\mathcal{H}:[0,1]\times V\rightarrow V.
\]
Observe that $A_{0}=I_{2\times2}$ and $u$ is
harmonic in $\Omega_{+}$ by (\ref{equation u}), hence $\mathcal{H}(0,0)=0$. Moreover, (\ref{equation u}) implies that $\mathcal{H}%
(0,v)=v$, thus $\partial_{v}%
\mathcal{H}(0,0)$ is the identity operator from $V$ into $V$. Since the matrix
$A_{s}$ in (\ref{As}) is of class $C^{1}$, by standard elliptic estimates (see
also the proof of Proposition~\ref{proposition estimates us} below), we have
that the map $\mathcal{H}$ is of class $C^{1}$. Hence, we are in a position to
apply the implicit function theorem (see, e.g., \cite[Theorem~2.3]%
{ambrosetti-prodi}) to find $\delta_{0}>0$ and $r_{0}>0$ and a unique
continuous function $g:[0,\delta_{0}]\rightarrow B_{V}(0,r_{0})$ such that
$g(0)=0$ and
\[
\mathcal{H}(s,g(s))=0
\]
for all $s\in\lbrack0,\delta_{0}]$. Moreover, $g$ is of class $C^{1}$.

On the other hand, in view of (\ref{s3}) the function $\hat{u}_{s}-u$ belongs
to $V$ and satisfies
\[
\mathcal{H}(s,\hat{u}_{s}-u)=0
\]
for all $s\in[0,1]$. Since the map $s\mapsto\hat{u}_{s}-u$ is continuous (see,
e.g., the proof of (\ref{s6}) below), it follows by uniqueness that
$g(s)=\hat{u}_{s}-u$ for all $s\in[0,\delta_{0}]$. In particular,
$s\mapsto\hat{u}_{s}-u$ is of class $C^{1}$.

To prove (\ref{udot eq}), let $v\in H_{\operatorname*{loc}}^{1}(\Omega)$ be
such that $\nabla v\in L^{2}(\Omega;\mathbb{R}^{2})$, $v(-1,y)=v(1,y)$ for
$y\in(0,\infty)$, $v(x,0)=0$ for $x\in(-1,1)$, and $\overline{\Gamma}%
\cap\operatorname*{supp}v=\emptyset$. Then $\overline{\Gamma}_{s}%
\cap\operatorname*{supp}v=\emptyset$ for all $s$ sufficiently small. By
(\ref{equation u t}) it follows that there exists an open subset of $\Omega$
containing $\operatorname*{supp}v$ and on which $u_{s}$ is harmonic for all
$s$ sufficiently small; thus,
\[
\int_{\Omega}\nabla u_{s}\cdot\nabla v~d\boldsymbol{x}=0
\]
for all $s$ sufficiently small. Differentiating the previous identity with
respect to $s$ (see (\ref{u dot})) we obtain
\begin{equation}
\int_{\Omega}\nabla\dot{u}_{s}\cdot\nabla v~d\boldsymbol{x}=0. \label{101}%
\end{equation}
By (\ref{equation u t}) we have that $u_{s}(\Phi_{s}(\boldsymbol{x}))=0$ for
$\boldsymbol{x}\in\Gamma$; thus,
\[
\dot{u}_{s}(\Phi_{s}(\boldsymbol{x}))=-\nabla u_{s}(\Phi_{s}(\boldsymbol{x}%
))\cdot\dot{\Phi}_{s}(\boldsymbol{x})\quad\text{ for }\boldsymbol{x}\in
\Gamma,
\]
which by (\ref{X and Z}) is equivalent to
\begin{equation}
\dot{u}_{s}=-\nabla u_{s}\cdot X_{s}\quad\text{ on }\Gamma_{s}.
\label{udot on G}%
\end{equation}
On the other hand, since $\nabla u_{s}=\partial_{\nu_{s}}u_{s}\,\nu_{s}$ on
$\Gamma_{s}$ by (\ref{equation u t}), we have that $\nabla u_{s}\cdot
X_{s}=(X_{s}\cdot\nu_{s})\,\partial_{\nu_{s}}u_{s}$ on $\Gamma_{s}$. In
conclusion,
\begin{equation}
\dot{u}_{s}=-(X_{s}\cdot\nu_{s})\,\partial_{\nu_{s}}u_{s}\quad\text{ on
}\Gamma_{s}. \label{102}%
\end{equation}

Let now $v\in H_{\operatorname*{loc}}^{1}(\Omega)$ be such that $\nabla v\in
L^{2}(\Omega;\mathbb{R}^{2})$, $v(-1,y)=v(1,y)$ for $y\in(0,\infty)$,
$v(x,0)=0$ for $x\in(-1,1)$, and decompose $v=v_{1}+v_{2}$, where
$\overline{\Gamma}\cap\operatorname*{supp}v_{1}=\emptyset$. Then by
(\ref{101}) and (\ref{102}), integrating by parts we get
\begin{align*}
\int_{\Omega}\nabla\dot{u}_{s}\cdot\nabla v~d\boldsymbol{x}  &  =\int_{\Omega
}\nabla\dot{u}_{s}\cdot\nabla v_{2}~d\boldsymbol{x}=\int_{\Gamma_{s}}%
(X_{s}\cdot\nu_{s})\,\partial_{\nu_{s}}u_{s}\,\partial_{\nu_{s}}%
v_{2}\,d\mathcal{H}^{1}\\
&  =\int_{\Gamma_{s}}(X_{s}\cdot\nu_{s})\,\partial_{\nu_{s}}u_{s}%
\,\partial_{\nu_{s}}v\,d\mathcal{H}^{1}%
\end{align*}
for all $s$ sufficiently small. This proves that the function $\dot{u}_{s}$ is
a solution to (\ref{udot eq}).
\end{proof}

In view of Proposition~\ref{proposition regularity u hat s} we can now derive
the second derivative of $\mathcal{F}(u_{s})$.

\begin{theorem}
\label{thm:vars}Let $u\in C^{2}(\Omega_{+}\cup\Gamma)$ satisfy
(\ref{Omega plus})--(\ref{periodicity u}), let $Q$ satisfy (\ref{function Q}), and let $\{\Phi_{s}\}_{s\in\lbrack
0,1]}$ be an admissible flow. Then
\begin{equation}
\frac{d}{ds}\mathcal{F}(u_{s})=\int_{\Gamma_{s}}(Q^{2}-|\nabla u_{s}%
|^{2})(X_{s}\cdot\nu_{s})\,d\mathcal{H}^{1} \label{1var}%
\end{equation}
and
\begin{align}
\frac{d^{2}}{ds^{2}}\mathcal{F}(u_{s})  &  =\int_{\Phi_{s}(\Omega_{+}%
)}2|\nabla\dot{u}_{s}|^{2}\,d\boldsymbol{x}+\int_{\Gamma_{s}}\big(\partial
_{\nu_{s}}Q^{2}+2\kappa_{s}(\partial_{\nu_{s}}u_{s})^{2}\big)(X_{s}\cdot
\nu_{s})^{2}\,d\mathcal{H}^{1}\nonumber\\
&  \quad+\int_{\Gamma_{s}}(Q^{2}-|\nabla u_{s}|^{2})\big(Z_{s}\cdot\nu
_{s}-2(X_{s}\cdot\tau_{s})\,\partial_{\tau_{s}}(X_{s}\cdot\nu_{s})+\kappa
_{s}|X_{s}|^{2}\big)\,d\mathcal{H}^{1}, \label{2var}%
\end{align}
where $\dot{u}_{s}$ is given in (\ref{udot eq}).
\end{theorem}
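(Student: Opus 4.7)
My plan is to apply Reynolds-type transport theorems to the moving-domain and moving-surface integrals appearing in $\mathcal F(u_s)$, and then to use Green's identity together with the harmonicity of $u_s$ and of $\dot u_s$ and the boundary data from Proposition~\ref{proposition regularity u hat s} to pass between bulk and boundary integrals.

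For (\ref{1var}), I write $\mathcal F(u_s)=\int_{\Phi_s(\Omega_+)}|\nabla u_s|^2\,d\boldsymbol x+\int_{\Phi_s(\Omega_+)}Q^2\,d\boldsymbol x$ and apply Reynolds. This yields the boundary flux $\int_{\Gamma_s}(|\nabla u_s|^2+Q^2)(X_s\cdot\nu_s)\,d\mathcal H^1$ (with no contribution from the rest of $\partial\Phi_s(\Omega_+)$, since $X_s\equiv 0$ there by Definition~\ref{definition admissible flow}(iv)) together with the interior term $\int_{\Phi_s(\Omega_+)}2\nabla u_s\cdot\nabla\dot u_s\,d\boldsymbol x$. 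Integrating the interior term by parts using $\Delta u_s=0$ and the trace $\dot u_s|_{\Gamma_s}=-(X_s\cdot\nu_s)\partial_{\nu_s}u_s$ (with vanishing contributions from $\{y=0\}$ and the periodic sides) reduces it to $-2\int_{\Gamma_s}(\partial_{\nu_s}u_s)^2(X_s\cdot\nu_s)\,d\mathcal H^1$, and the identity $|\nabla u_s|^2=(\partial_{\nu_s}u_s)^2$ on $\Gamma_s$ (forced by $u_s\equiv 0$ there) then gives (\ref{1var}).

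For (\ref{2var}), I differentiate (\ref{1var}) using the surface transport formula
\[\frac{d}{ds}\int_{\Gamma_s}g\,d\mathcal H^1=\int_{\Gamma_s}\bigl[\partial_sg+X_s\cdot\nabla g+g\operatorname{div}_{\Gamma_s}X_s\bigr]\,d\mathcal H^1,\quad\operatorname{div}_{\Gamma_s}X_s=\partial_{\tau_s}(X_s\cdot\tau_s)+\kappa_s(X_s\cdot\nu_s),\]
applied to $g=(Q^2-|\nabla u_s|^2)(X_s\cdot\nu_s)$. The key inputs are the material derivatives along the flow: for $X_s$ it is $Z_s$, and for $\nu_s$ it is, via (\ref{tangent and normal}), $-[\partial_{\tau_s}(X_s\cdot\nu_s)-\kappa_s(X_s\cdot\tau_s)]\tau_s$. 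The material derivative of $|\nabla u_s|^2$ produces $2\nabla u_s\cdot\nabla\dot u_s+2\nabla u_s\cdot D^2u_sX_s$; on $\Gamma_s$ the Frenet-frame identities $\nabla u_s=(\partial_{\nu_s}u_s)\nu_s$, $\partial_{\tau_s\tau_s}u_s=\kappa_s\partial_{\nu_s}u_s$ (from $u_s|_{\Gamma_s}=0$), and $\partial_{\nu_s\nu_s}u_s=-\kappa_s\partial_{\nu_s}u_s$ (from $\Delta u_s=0$) make $\nabla u_s\cdot D^2u_sX_s$ explicit and produce the curvature contribution $2\kappa_s(\partial_{\nu_s}u_s)^2(X_s\cdot\nu_s)^2$ in (\ref{2var}). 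The remaining bulk-type piece $-2(X_s\cdot\nu_s)\nabla u_s\cdot\nabla\dot u_s$ equals $2\dot u_s\partial_{\nu_s}\dot u_s$ on $\Gamma_s$ by the trace formula for $\dot u_s$, and Green's identity combined with $\Delta\dot u_s=0$ converts this boundary integral into the interior integral $\int_{\Phi_s(\Omega_+)}2|\nabla\dot u_s|^2\,d\boldsymbol x$.

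The main obstacle is the algebraic bookkeeping of the remaining terms, all of which carry $(Q^2-|\nabla u_s|^2)$ as a factor. Substituting the computed material derivatives into $\partial_s^\text{mat}(X_s\cdot\nu_s)+(X_s\cdot\nu_s)\operatorname{div}_{\Gamma_s}X_s$, these terms assemble into $(Q^2-|\nabla u_s|^2)\bigl[Z_s\cdot\nu_s+\kappa_s|X_s|^2-2(X_s\cdot\tau_s)\partial_{\tau_s}(X_s\cdot\nu_s)+\partial_{\tau_s}\!\bigl((X_s\cdot\nu_s)(X_s\cdot\tau_s)\bigr)\bigr]$. Integrating the last exact tangential derivative by parts along the closed curve $\Gamma_s$ (permitted by periodicity and by the compact support of $X_s$) yields a cross-term $-(X_s\cdot\nu_s)(X_s\cdot\tau_s)\partial_{\tau_s}(Q^2-|\nabla u_s|^2)$ that precisely cancels the analogous tangential piece produced when the material derivative falls on $Q^2-|\nabla u_s|^2$ itself. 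After this cancellation, the surviving terms match exactly the right-hand side of (\ref{2var}).
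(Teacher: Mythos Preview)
Your argument is correct. For (\ref{1var}) your Reynolds-transport phrasing is exactly the paper's change-of-variables computation, just expressed in moving-domain language. For (\ref{2var}) the content is also the same --- the same Frenet identities on $\Gamma_s$, the same conversion of $-2(X_s\cdot\nu_s)\nabla u_s\cdot\nabla\dot u_s$ into $2\dot u_s\partial_{\nu_s}\dot u_s$ and then into $\int 2|\nabla\dot u_s|^2$ via Green, and the same tangential integration by parts that cancels the $(X_s\cdot\tau_s)(X_s\cdot\nu_s)\partial_{\tau_s}(Q^2-|\nabla u_s|^2)$ cross-term --- but the organization differs in one respect. The paper computes the second derivative only at $s=0$ (pulling the integral back to the fixed curve $\Gamma$, differentiating, and invoking \cite[Lemma~3.8]{CMM} for the derivative of $\dot\Phi_s\cdot(\nu_s\circ\Phi_s)\,J_{\Phi_s}$), and then obtains the formula at a general $s=r$ by applying the $s=0$ result to the reparametrized flow $\tilde\Phi_h:=\Phi_{r+h}\circ\Phi_r^{-1}$. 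You instead work directly at arbitrary $s$ via the surface transport formula and explicit material derivatives of $\nu_s$ and $X_s$. Your route is slightly more self-contained (no external lemma, no reparametrization step) at the cost of keeping track of the moving-frame geometry throughout; the paper's route isolates the geometric computation once at $s=0$ and transports it. Either way the algebra and the final cancellation are identical.
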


\begin{remark}
\label{remark minimality}If $u$ is a minimizer of $\mathcal{F}$, then the
expression in (\ref{1var}) is equal to zero at $s=0$; since this is true for
any choice of the admissible flow, this implies
\begin{equation}
|\nabla u|^{2}=Q^{2}\quad\text{on }\Gamma.\label{EL}%
\end{equation}
In turn, the second variation at $s=0$ reduces to
\begin{equation}
\frac{d^{2}}{ds^{2}}\mathcal{F}(u_{s})\Big|_{s=0}=\int_{\Omega_{+}}2|\nabla
u_{X_{0}\cdot\nu}|^{2}\,d\boldsymbol{x}+\int_{\Gamma}\big(\partial_{\nu}%
Q^{2}+2\kappa Q^{2}\big)(X_{0}\cdot\nu)^{2}\,d\mathcal{H}^{1}%
,\label{2var simple}%
\end{equation}
where $u_{X_{0}\cdot\nu}$ is the solution to
\begin{equation}%
\begin{cases}
\Delta u_{X_{0}\cdot\nu}=0 & \text{in }\Omega_{+},\\
u_{X_{0}\cdot\nu}=Q(X_{0}\cdot\nu) & \text{on }\Gamma,\\
u_{X_{0}\cdot\nu}=0 & \text{on }\{y=0\}\cap\partial\Omega_{+},%
\end{cases}
\label{100}%
\end{equation}
with $u_{X_{0}\cdot\nu}(-1,y)=u_{X_{0}\cdot\nu}(1,y)$ for all $y$ such that
$(\pm1,y)\in\overline{\Omega_{+}}$. Indeed, on $\Gamma$ we have $(\partial
_{\nu}u)^{2}=|\nabla u|^{2}=Q^{2}$ and $\partial_{\nu}u<0$ by the Hopf Lemma.
Moreover, the expression in (\ref{2var simple}) is nonnegative. 

Note that every minimizer $u$ satisfies the necessary condition
\begin{equation}\label{mg101}
\int_{\Omega_{+}}2|\nabla u_{\psi}|^{2}\,d\boldsymbol{x}+\int_{\Gamma
}\big(\partial_{\nu}Q^{2}+2\kappa Q^{2}\big)\psi^{2}\,d\mathcal{H}^{1}\geq0
\end{equation}
for every $\psi\in C_{c}^{2}(\Gamma)$, where
$u_{\psi}$ solves (\ref{100}) with $\psi$ in place of $X_{0}\cdot\nu$.
In fact, for every $\psi\in C_{c}^{2}(\Gamma)$ with small $C^{2}$ norm it is
possible to construct an admissible flow $\{\Phi_{s}\}_{s\in\lbrack0,1]}$ such
that $X_{0}\cdot\nu=\psi$ on $\Gamma$. To see this, it is enough to consider
\[
\Phi_{s}(x,y):=(x,y)+\lambda(y)s\psi(x,w(x))\nu(x,y),
\]
where the normal $\nu$ to $\Gamma$ has been extended smoothly, and $\lambda$ is
a cut-off function (see (\ref{phi s}) below for more details). Hence, from
(\ref{2var simple}) we deduce that (\ref{mg101}) holds for every $\psi\in C_{c}^{2}(\Gamma)$ with small $C^{2}$ norm. In turn, given an arbitrary $\psi\in C_{c}^{2}(\Gamma)$, using a scaling argument, it can be shown that (\ref{mg101})
continues to hold.
\end{remark}

\begin{remark}
Observe that if $u$ is a critical point of $\mathcal{F}$, that is, $u$
satisfies (\ref{EL}) in addition to (\ref{Omega plus})--(\ref{periodicity u}),
then (\ref{2var simple}) holds.
\end{remark}

\begin{proof}
[Proof of Theorem~\ref{thm:vars}]In view of (\ref{equation u t}), we have that
$u_{s}>0$ in $\Phi_{s}(\Omega_{+})$; thus,
\begin{align*}
\mathcal{F}(u_{s})  &  =\int_{\Phi_{s}(\Omega_{+})}\big(|\nabla u_{s}%
|^{2}+Q^{2}(\boldsymbol{x})\big)\,d\boldsymbol{x}\\
&  =\int_{\Omega_{+}}\big(|\nabla u_{s}(\Phi_{s}(\boldsymbol{y}))|^{2}%
+Q^{2}(\Phi_{s}(\boldsymbol{y}))\big)\det D\Phi_{s}(\boldsymbol{y}%
)\,d\boldsymbol{y}.
\end{align*}
Differentiating the previous identity with respect to $s$ we obtain
\begin{align*}
\frac{d}{ds}\mathcal{F}(u_{s})  &  =\int_{\Omega_{+}}2\nabla u_{s}(\Phi
_{s}(\boldsymbol{y}))\cdot\big(\nabla\dot{u}_{s}(\Phi_{s}(\boldsymbol{y}%
))+D^{2}u_{s}(\Phi_{s}(\boldsymbol{y}))\dot{\Phi}_{s}(\boldsymbol{y})\big)\det
D\Phi_{s}(\boldsymbol{y})\,d\boldsymbol{y}\\
&  \quad+\int_{\Omega_{+}}\nabla Q^{2}(\Phi_{s}(\boldsymbol{y}))\cdot\dot
{\Phi}_{s}(\boldsymbol{y})\det D\Phi_{s}(\boldsymbol{y})\,d\boldsymbol{y}\\
&  \quad+\int_{\Omega_{+}}\big(|\nabla u_{s}(\Phi_{s}(\boldsymbol{y}%
))|^{2}+Q^{2}(\Phi_{s}(\boldsymbol{y}))\big)\,\frac{d}{ds}(\det D\Phi
_{s}(\boldsymbol{y}))\,d\boldsymbol{y},
\end{align*}
where we used (\ref{u dot}) and (\ref{Phi dot}).
By \cite[Chapter~III, Section~10]{G} we have
\[
\frac{d}{ds}(\det D\Phi_{s})=\big[\operatorname{div}(\dot{\Phi}_{s}\circ
\Phi_{s}^{-1})\circ\Phi_{s}\big]\det D\Phi_{s},
\]
thus, recalling that $X_{s}=\dot{\Phi}_{s}\circ\Phi_{s}^{-1}$ (see
(\ref{X and Z})),
\begin{align*}
\frac{d}{ds}\mathcal{F}(u_{s})  &  =\int_{\Phi_{s}(\Omega_{+})}2\nabla
u_{s}\cdot\nabla\dot{u}_{s}\,d\boldsymbol{x}+\int_{\Phi_{s}(\Omega_{+})}%
2D^{2}u_{s}\nabla u_{s}\cdot X_{s}\,d\boldsymbol{x}\\
&  \quad+\int_{\Phi_{s}(\Omega_{+})}|\nabla u_{s}|^{2}\,\operatorname{div}%
X_{s}\,d\boldsymbol{x}+\int_{\Phi_{s}(\Omega_{+})}\big(\nabla Q^{2}\cdot
X_{s}+Q^{2}\operatorname{div}X_{s}\big)\,d\boldsymbol{x}.
\end{align*}
Integrating by parts, from (\ref{udot eq}) and the fact that
$\operatorname*{supp}(\Phi_{s}-\operatorname*{Id})\subset U$ for all
$s\in\lbrack0,1]$, we deduce that
\begin{align*}
\frac{d}{ds}\mathcal{F}(u_{s})  &  =\int_{\Gamma_{s}}2\dot{u}_{s}\partial
_{\nu_{s}}u_{s}\,d\mathcal{H}^{1}+\int_{\Phi_{s}(\Omega_{+})}%
\operatorname{div}\big((|\nabla u_{s}|^{2}+Q^{2})X_{s}\big)\,d\boldsymbol{x}\\
&  =\int_{\Gamma_{s}}\big(-2(X_{s}\cdot\nu_{s})(\partial_{\nu_{s}}u_{s}%
)^{2}+(|\nabla u_{s}|^{2}+Q^{2})(X_{s}\cdot\nu_{s})\big)\,d\mathcal{H}^{1}.
\end{align*}
Since $(\partial_{\nu_{s}}u_{s})^{2}=|\nabla u_{s}|^{2}$ on $\Gamma_{s}$, we
obtain (\ref{1var}).

We now derive the second derivative of $\mathcal{F}(u_{s})$ with respect to
$s$ at $s=0$. First, by the area formula we can write the first derivative as
\[
\frac{d}{ds}\mathcal{F}(u_{s})=\int_{\Gamma}\big(Q^{2}(\Phi_{s}(\boldsymbol{y}%
))-|\nabla u_{s}(\Phi_{s}(\boldsymbol{y}))|^{2}\big)\,\dot{\Phi}%
_{s}(\boldsymbol{y})\cdot\nu_{s}(\Phi_{s}(\boldsymbol{y}))\,J_{\Phi_{s}%
}(\boldsymbol{y})\,d\mathcal{H}^{1}(\boldsymbol{y}),
\]
where $J_{\Phi_{s}}:=|(D\Phi_{s})^{-T}\nu|\det D\Phi_{s}$ is the
one-dimensional Jacobian of $\Phi_{s}$. Differentiating with respect to $s$
yields
\begin{align}
\frac{d^{2}}{ds^{2}}\mathcal{F}(u_{s})\Big|_{s=0}  &  =\int_{\Gamma
}\big(\nabla Q^{2}\cdot\dot{\Phi}_{0}\big)\,\dot{\Phi}_{0}\cdot\nu
\,d\mathcal{H}^{1}\nonumber\\
&  \quad-\int_{\Gamma}2\big(\nabla u\cdot\nabla\dot{u}_{0}+D^{2}u\nabla
u\cdot\dot{\Phi}_{0}\big)\,\dot{\Phi}_{0}\cdot\nu\,d\mathcal{H}^{1}\nonumber\\
&  \quad+\int_{\Gamma}(Q^{2}-|\nabla u|^{2})\,\frac{d}{ds}\big[\dot{\Phi}%
_{s}\cdot(\nu_{s}\circ\Phi_{s})\,J_{\Phi_{s}}\big]\Big|_{s=0}\,d\mathcal{H}%
^{1}. \label{2der}%
\end{align}
The first integral in the above expression can be written as
\[
\int_{\Gamma}\big(\nabla Q^{2}\cdot\dot{\Phi}_{0}\big)\,\dot{\Phi}_{0}\cdot
\nu\,d\mathcal{H}^{1}=\int_{\Gamma}\big(\partial_{\tau}Q^{2}\,(\dot{\Phi}%
_{0}\cdot\tau)(\dot{\Phi}_{0}\cdot\nu)+\partial_{\nu}Q^{2}\,(\dot{\Phi}%
_{0}\cdot\nu)^{2}\big)\,d\mathcal{H}^{1}.
\]
Since $\nabla u=\partial_{\nu}u\,\nu$ on $\Gamma$, the first term in the
second line of (\ref{2der}) becomes
\begin{align*}
-\int_{\Gamma}2\big(\nabla u\cdot\nabla\dot{u}_{0}\big)\,\dot{\Phi}_{0}%
\cdot\nu\,d\mathcal{H}^{1}  &  =-\int_{\Gamma}2\partial_{\nu}\dot{u}%
_{0}\partial_{\nu}u\,\dot{\Phi}_{0}\cdot\nu\,d\mathcal{H}^{1}\\
&  =\int_{\Gamma}2\dot{u}_{0}\partial_{\nu}\dot{u}_{0}\,d\mathcal{H}^{1},
\end{align*}
where we used the fact that $\dot{u}_{0}=-(X_{0}\cdot\nu)\partial_{\nu}u$ by
(\ref{udot on G}) and $X_{0}=\dot{\Phi}_{0}$. We now focus on the term
\[
-\int_{\Gamma}2\big(D^{2}u\nabla u\cdot\dot{\Phi}_{0}\big)\,\dot{\Phi}%
_{0}\cdot\nu\,d\mathcal{H}^{1}.
\]
Using again the fact that $\nabla u\cdot\tau=0$ on $\Gamma$ and that $u$ is
harmonic, we obtain
\[
0=\partial_{\tau}(\nabla u\cdot\tau)=D^{2}u\,\tau\cdot\tau+\nabla
u\cdot\partial_{\tau}\tau=-D^{2}u\,\nu\cdot\nu-\kappa\partial_{\nu}%
u\quad\text{on }\Gamma,
\]
that is,
\[
D^{2}u\,\nu\cdot\nu=-\kappa\partial_{\nu}u\quad\text{on }\Gamma.
\]
Thus,
\begin{align*}
\lefteqn{-\int_{\Gamma}2\big(D^{2}u\nabla u\cdot\dot{\Phi}_{0}\big)\,\dot
{\Phi}_{0}\cdot\nu\,d\mathcal{H}^{1}}\\
&  =-\int_{\Gamma}2\big(D^{2}u\nabla u\cdot\tau\big)(\dot{\Phi}_{0}\cdot
\tau)(\dot{\Phi}_{0}\cdot\nu)\,d\mathcal{H}^{1}-\int_{\Gamma}2\partial_{\nu
}u\big(D^{2}u\,\nu\cdot\nu\big)(\dot{\Phi}_{0}\cdot\nu)^{2}\,d\mathcal{H}%
^{1}\\
&  =-\int_{\Gamma}\partial_{\tau}\big(|\nabla u|^{2}\big)(\dot{\Phi}_{0}%
\cdot\tau)(\dot{\Phi}_{0}\cdot\nu)\,d\mathcal{H}^{1}+\int_{\Gamma}%
2\kappa(\partial_{\nu}u)^{2}(\dot{\Phi}_{0}\cdot\nu)^{2}\,d\mathcal{H}^{1}.
\end{align*}
Finally, by \cite[Lemma~3.8]{CMM} we have
\[
\frac{d}{ds}\big[\dot{\Phi}_{s}\cdot(\nu_{s}\circ\Phi_{s})\,J_{\Phi_{s}%
}\big]\Big|_{s=0}=\ddot{\Phi}_{0}\cdot\nu-2(\dot{\Phi}_{0}\cdot\tau
)\partial_{\tau}(\dot{\Phi}_{0}\cdot\nu)+\kappa(\dot{\Phi}_{0}\cdot\tau
)^{2}+\partial_{\tau}\big[(\dot{\Phi}_{0}\cdot\nu)\dot{\Phi}_{0}\big]\cdot
\tau.
\]
Combining the previous equalities, we deduce that
\begin{align*}
\frac{d^{2}}{ds^{2}}\mathcal{F}(u_{s})\Big|_{s=0}  &  =\int_{\Gamma}2\dot
{u}_{0}\partial_{\nu}\dot{u}_{0}\,d\mathcal{H}^{1}+\int_{\Gamma}%
\big(\partial_{\nu}Q^{2}+2\kappa(\partial_{\nu}u)^{2}\big)(\dot{\Phi}_{0}%
\cdot\nu)^{2}\,d\mathcal{H}^{1}\\
&  \quad+\int_{\Gamma}(Q^{2}-|\nabla u|^{2})\big(\ddot{\Phi}_{0}\cdot
\nu-2(\dot{\Phi}_{0}\cdot\tau)\partial_{\tau}(\dot{\Phi}_{0}\cdot\nu
)+\kappa(\dot{\Phi}_{0}\cdot\tau)^{2}\big)\,d\mathcal{H}^{1}\\
&  \quad+\int_{\Gamma}(Q^{2}-|\nabla u|^{2})\,\partial_{\tau}\big[(\dot{\Phi
}_{0}\cdot\nu)\dot{\Phi}_{0}\big]\cdot\tau\,d\mathcal{H}^{1}\\
&  \quad+\int_{\Gamma}\partial_{\tau}\big(Q^{2}-|\nabla u|^{2}\big)(\dot{\Phi
}_{0}\cdot\tau)(\dot{\Phi}_{0}\cdot\nu)\,d\mathcal{H}^{1}.
\end{align*}
Now, using the fundamental theorem of calculus on curves, the last two
integrals in the formula above satisfy
\begin{align*}
\int_{\Gamma}(Q^{2}-|\nabla u|^{2})\,  &  \partial_{\tau}\big[(\dot{\Phi}%
_{0}\cdot\nu)\dot{\Phi}_{0}\big]\cdot\tau\,d\mathcal{H}^{1}+\int_{\Gamma
}\partial_{\tau}\big(Q^{2}-|\nabla u|^{2}\big)(\dot{\Phi}_{0}\cdot\tau
)(\dot{\Phi}_{0}\cdot\nu)\,d\mathcal{H}^{1}\\
&  =\int_{\Gamma}\partial_{\tau}\big[(Q^{2}-|\nabla u|^{2})\,(\dot{\Phi}%
_{0}\cdot\nu)\dot{\Phi}_{0}\big]\cdot\tau\,d\mathcal{H}^{1}\\
&  =\int_{\Gamma}\kappa(Q^{2}-|\nabla u|^{2})\,(\dot{\Phi}_{0}\cdot\nu
)^{2}\,d\mathcal{H}^{1}.
\end{align*}
Thus, we conclude that
\begin{align}\nonumber
\frac{d^{2}}{ds^{2}}\mathcal{F}(u_{s})\Big|_{s=0}=  &  \int_{\Gamma}2\dot
{u}_{0}\partial_{\nu}\dot{u}_{0}\,d\mathcal{H}^{1}+\int_{\Gamma}%
\big(\partial_{\nu}Q^{2}+2\kappa(\partial_{\nu}u)^{2}\big)(X_{0}\cdot\nu
)^{2}\,d\mathcal{H}^{1}\\
&  \quad+\int_{\Gamma}(Q^{2}-|\nabla u|^{2})\big(Z_{0}\cdot\nu-2(X_{0}%
\cdot\tau)\,\partial_{\tau}(X_{0}\cdot\nu)+\kappa|X_{0}|^{2}%
\big)\,d\mathcal{H}^{1},\label{ir01}
\end{align}
where we used the fact that $\dot{\Phi}_{0}=X_{0}$ and $\ddot{\Phi}_{0}=Z_{0}$.

Let us now fix $r\in(0,1)$. We observe that the family of diffeomorphisms
$\{\tilde{\Phi}_{h}\}_{h\in\lbrack0,1]}$ defined as
\[
\tilde{\Phi}_{h}:=\Phi_{r+h}\circ\Phi_{r}^{-1}%
\]
is still an admissible flow (we can always reparametrize the variable $h$ away
from $0$ so that $\tilde{\Phi}_{h}$ is defined for all $h\in\lbrack0,1]$), and
that
\[
\dot{{\tilde{\Phi}}}_{0}=X_{r},\qquad\ddot{\tilde{\Phi}}_{0}=Z_{r}.
\]
Applying (\ref{ir01}), we deduce that
\begin{align*}
\frac{d^{2}}{ds^{2}} &  \mathcal{F}(u_{s})\Big|_{s=r}=\frac{d^{2}}{dh^{2}%
}\mathcal{F}(u_{r+h})\Big|_{h=0}\\
= &  \int_{\Gamma_{r}}2\dot{u}_{r}\partial_{\nu_{r}}\dot{u}_{r}\,d\mathcal{H}%
^{1}+\int_{\Gamma_{r}}\big(\partial_{\nu_{r}}Q^{2}+2\kappa_{r}(\partial
_{\nu_{r}}u_{r})^{2}\big)(X_{r}\cdot\nu_{r})^{2}\,d\mathcal{H}^{1}\\
&  \quad+\int_{\Gamma_{r}}(Q^{2}-|\nabla u_{r}|^{2})\big(Z_{r}\cdot\nu
_{r}-2(X_{r}\cdot\tau_{r})\,\partial_{\tau_{r}}(X_{r}\cdot\nu_{r})+\kappa
_{r}|X_{r}|^{2}\big)\,d\mathcal{H}^{1}.
\end{align*}

To conclude the proof of (\ref{2var}), it remains to show that
\begin{equation}
\int_{\Gamma_{s}}2\dot{u}_{s}\partial_{\nu_{s}}\dot{u}_{s}\,d\mathcal{H}%
^{1}=\int_{\Phi_{s}(\Omega_{+})}2|\nabla\dot{u}_{s}|^{2}\,d\boldsymbol{x}.
\label{variant}%
\end{equation}
Indeed, by (\ref{udot eq}) and the divergence theorem
\begin{align*}
\int_{\Gamma_{s}}2\dot{u}_{s}\partial_{\nu_{s}}\dot{u}_{s}\,d\mathcal{H}^{1}
&  =\int_{\partial\Phi_{s}(\Omega_{+})}2\dot{u}_{s}\partial_{\nu_{s}}\dot
{u}_{s}\,d\mathcal{H}^{1}\\
&  =\int_{\Phi_{s}(\Omega_{+})}2\big(\dot{u}_{s}\Delta\dot{u}_{s}+|\nabla
\dot{u}_{s}|^{2}\big)\,d\boldsymbol{x}\\
&  =\int_{\Phi_{s}(\Omega_{+})}2|\nabla\dot{u}_{s}|^{2}\,d\boldsymbol{x}.
\end{align*}
Hence, (\ref{2var}) holds and the proof is complete.
\end{proof}

\section{Construction of the Family $\Phi_{s}$}

\label{section diff}

Let $\Omega$, $u$, and $\Gamma$ be as in
Section~\ref{section second variation}, and assume that
\[
\Gamma=\left\{  (x,w(x)):\,x\in(-1,1)\right\}  ,
\]
where $w$ is a periodic function with $w\in C^{3}(\mathbb{R})$ and
\begin{equation}
w(x)>0\quad\text{for all }x\in\lbrack-1,1]. \label{w positive}%
\end{equation}
Let
\[
-1<a<b<1
\]
and consider a polynomial $\varphi:[a,b]\rightarrow\mathbb{R}$ satisfying
\begin{equation}
\varphi(a)=\varphi^{\prime}(a)=\varphi^{\prime\prime}(a)=\varphi^{\prime
\prime\prime}(a)=0,\quad\varphi(b)=\varphi^{\prime}(b)=\varphi^{\prime\prime
}(b)=\varphi^{\prime\prime\prime}(b)=0,
\label{f and itz derivatives zero at endpoints}%
\end{equation}
and such that $\left\Vert \varphi\right\Vert _{C^{2,\alpha}(a,b)}<<1$. Extend
$\varphi$ to be zero outside $[a,b]$. In this section we construct an
admissible flow (see Definition~\ref{definition admissible flow}) joining
$\Gamma$ to $\operatorname*{graph}(w+\varphi)$. To estimate the second
variation along the flow it is essential to have the condition $X_{s}\cdot
\tau_{s}=0$ on $\Gamma_{s}$ for every $s$. This leads to a first order partial
differential equation (see (\ref{i equiv}) below), that we solve using the
method of characteristics. One of the main difficulties is that the components
of the flow are given by compositions of functions that are discontinuous.
Thus, proving the regularity of the flow is extremely delicate and it will be
carried out in the appendix. The construction of the flow is the central part
of this paper and will require several preliminary results.

\begin{theorem}
\label{theorem diffeomorphism}Let $\varphi$ and $w$ be as above. Then there
exists an admissible flow $\{\Phi_{s}\}_{s\in\lbrack0,1]}$ such that
\begin{equation}
\Phi_{s}(\Gamma)=\left\{  (x,w(x)+s\varphi(x)):\,x\in(-1,1)\right\}  
\label{Phi s Gamma}%
\end{equation}
and
\begin{equation}
X_{s}\cdot\tau_{s}=0\quad\text{on }\Gamma_{s}
\label{tangential part equal zero}%
\end{equation}
for every $s\in\lbrack0,1]$.
\end{theorem}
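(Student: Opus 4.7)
The plan is to encode conditions (\ref{Phi s Gamma}) and (\ref{tangential part equal zero}) together as a first-order ODE governing the motion of points of $\Gamma$ under the flow, solve this ODE by the method of characteristics, and then extend the resulting boundary motion to a flow on all of $\overline{\Omega}$ by a cut-off transverse to $\Gamma$.

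Setting $W_{s}(x):=w(x)+s\varphi(x)$, condition (\ref{Phi s Gamma}) forces
\[
\Phi_{s}(x_{0},w(x_{0}))=\bigl(\xi(s,x_{0}),\;W_{s}(\xi(s,x_{0}))\bigr)
\]
for some $\xi$ with $\xi(0,x_{0})=x_{0}$. Differentiating in $s$ gives the boundary velocity $\dot{\Phi}_{s}=(\partial_{s}\xi,\;W_{s}'(\xi)\partial_{s}\xi+\varphi(\xi))$, and a tangent to $\Gamma_{s}$ at $(\xi,W_{s}(\xi))$ is $(1,W_{s}'(\xi))$. Hence the orthogonality condition (\ref{tangential part equal zero}) is equivalent to
\[
\partial_{s}\xi=-\frac{W_{s}'(\xi)\,\varphi(\xi)}{1+W_{s}'(\xi)^{2}},\qquad \xi(0,x_{0})=x_{0}.
\]
This is the characteristic form of the first-order PDE referenced in the theorem's preamble. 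Since $w\in C^{3}$, $\varphi\in C^{2,\alpha}$ with $\|\varphi\|_{C^{2,\alpha}}\ll 1$, Picard--Lindel\"of yields a unique $C^{2}$ solution $\xi(s,x_{0})$ on $[0,1]\times\mathbb{R}$, with $x_{0}\mapsto\xi(s,x_{0})$ a diffeomorphism close to the identity and $\xi(s,x_{0})=x_{0}$ for $x_{0}\notin[a,b]$.

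To obtain an admissible flow on $\overline{\Omega}$, I would choose a cut-off $\lambda\in C_{c}^{\infty}(\mathbb{R})$ with $\lambda\equiv 1$ near $0$ and supported in a narrow interval, so that by (\ref{w positive}) the set $\{(x,y):\lambda(y-w(x))\neq 0\}$ is compactly contained in $\Omega$, and define
\[
\Phi_{s}(x,y):=(x,y)+\lambda(y-w(x))\Bigl(\xi(s,x)-x,\;W_{s}(\xi(s,x))-w(x)\Bigr).
\]
By construction this matches the prescribed $\Phi_{s}|_{\Gamma}$, so (\ref{Phi s Gamma}) and (\ref{tangential part equal zero}) hold; and $\Phi_{s}-\operatorname{Id}$ is compactly supported in $\Omega$, uniformly in $s$, yielding (iii), (iv) of Definition~\ref{definition admissible flow}. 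For $\|\varphi\|_{C^{2,\alpha}}$ small, $D\Phi_{s}$ is close to the identity, so $\Phi_{s}$ is a global $C^{2}$ diffeomorphism of $\overline{\Omega}$, giving (ii); the $C^{2}$ dependence on $(s,x,y)$ in (i) follows from $w\in C^{3}$ and the smoothness of $\xi$.

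The main obstacle is the regularity claim (i). The paper indicates that the genuine construction introduces compositions with an inverse such as $\xi(s,\cdot)^{-1}$, and such inverses glue to the identity non-smoothly across the boundary $x=a,b$ of the moving region, producing the discontinuous building blocks mentioned in the introduction to the section. The $C^{2}$ regularity of the flow rests crucially on the fourth-order vanishing of $\varphi$ at the endpoints (\ref{f and itz derivatives zero at endpoints}): this forces $\xi(s,x)-x$ and all quantities built from it to vanish to sufficient order at $x=a,b$, so that gluing with the identity preserves $C^{2}$ regularity in $(s,x,y)$. Carrying this out carefully, together with verifying that the Jacobian stays uniformly positive and the support remains in a fixed compact $U\Subset\Omega$ for all $s\in[0,1]$, is the delicate work that the paper defers to its appendix.
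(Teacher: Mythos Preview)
Your approach is correct and is genuinely different from --- in fact, considerably simpler than --- the paper's.  Both constructions produce the \emph{same} function on $\Gamma$: the paper's $g(s,x)$ satisfies exactly your ODE
\[
\partial_{s}g=-\frac{\bigl(w'(g)+s\varphi'(g)\bigr)\varphi(g)}{1+\bigl(w'(g)+s\varphi'(g)\bigr)^{2}},\qquad g(0,x)=x
\]
(this is their equation (\ref{partial s of g (new)})).  The difference is how one arrives at it.  The paper treats the orthogonality requirement as the PDE (\ref{i equiv}) and solves it by the method of characteristics: it introduces an auxiliary time $t$, the autonomous system (\ref{IVP}), and then must locate the first hitting time $t_{0}(s,x)$ of the graph of $w+s\varphi$.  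That hitting time is \emph{discontinuous} at every $x$ with $\varphi(x)=0$ lying on the boundary of $\{\varphi\neq0\}$, and the appendix is devoted to showing that the composition $g(s,x)=\xi(t_{0}(s,x),x)$ is nevertheless $C^{2}$.  You bypass all of this by parametrising the characteristic directly by $s$, which turns the problem into a single nonautonomous scalar ODE with $C^{2}$ right-hand side; standard dependence-on-initial-data then gives $\xi\in C^{2}([0,1]\times\mathbb{R})$ immediately, and your cut-off extension to $\overline{\Omega}$ (or the paper's simpler $\lambda(y)$ cut-off) yields an admissible flow.  What your route buys is that the ``delicate'' regularity argument disappears entirely; what the paper's route buys, if anything, is the explicit conservation law (\ref{conservation law}) and hitting-time picture, though the estimates actually used later (e.g., (\ref{est1}), Theorem~\ref{theorem norm of g and h}) follow just as easily from your ODE.

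Two small corrections.  First, you invoke $\varphi\in C^{2,\alpha}$, but to get $\xi\in C^{2}$ from ODE theory you need the right-hand side $C^{2}$ in $(s,\xi)$, hence $\varphi\in C^{3}$; this holds because $\varphi$ is a polynomial extended by zero with $\varphi,\varphi',\varphi'',\varphi'''$ vanishing at $a,b$ (condition (\ref{f and itz derivatives zero at endpoints})), so the extension is globally $C^{3}$.  Second, your final paragraph misdiagnoses the obstacle: in your construction there is no inverse $\xi(s,\cdot)^{-1}$ and no discontinuous building block at all --- the discontinuity in the paper is an artefact of their $t$-parametrisation and hitting time, not of the problem itself.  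The fourth-order vanishing of $\varphi$ is needed only to make the extended $\varphi$ regular enough, not to repair any gluing.
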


For every $x\in\mathbb{R}$, we consider the initial value problem%
\begin{equation}%
\begin{cases}
\dfrac{d\xi}{dt}=-w^{\prime}(\xi)\varphi(\xi)-(\eta-w(\xi))\varphi^{\prime
}(\xi),\smallskip\\
\dfrac{d\eta}{dt}=\varphi(\xi),\smallskip\\
\xi(0)=x,\ \eta(0)=w(x).
\end{cases}
\label{IVP}%
\end{equation}
Since $\varphi\in C_{c}^{3}(\mathbb{R})$, $w\in C^{3}(\mathbb{R})$, the
function
\[
(\xi,\eta)\in\mathbb{R}^{2}\mapsto(-w^{\prime}(\xi)\varphi(\xi)-(\eta
-w(\xi))\varphi^{\prime}(\xi),\varphi(\xi))
\]
is $C^{2}$, globally Lipschitz, and satisfies%
\[
\left\vert (-w^{\prime}(\xi)\varphi(\xi)-(\eta-w(\xi))\varphi^{\prime}%
(\xi),\varphi(\xi))\right\vert \leq C(1+\left\vert \eta\right\vert )
\]
for all $(\xi,\eta)\in\mathbb{R}^{2}$ and for some constant $C>0$. Hence, the
initial value problem (\ref{IVP}) admits a unique global solution, which
depends smoothly on the initial datum, and thus on $x$. We will denote by
$(\xi(t,x),\eta(t,x))$, $t\in\mathbb{R}$, the solution of (\ref{IVP}).

Observe that if $\varphi(x)=0$, then
\begin{equation}
(\xi(t,x),\eta(t,x))\equiv(x,w(x)). \label{solution when u=0}%
\end{equation}

\begin{remark}
\label{remark uniqueness}Note that if $\varphi^{\prime}(x_{0})=w^{\prime
}(x_{0})=0$ or $\varphi(x_{0})=\varphi^{\prime}(x_{0})=0$ for some $x_{0}%
\in\mathbb{R}$, then for every $y_{0}\in\mathbb{R}$ the unique solution of the
initial value problem
\begin{equation}
\label{ivp2}%
\begin{cases}
\dfrac{d\xi}{dt}=-w^{\prime}(\xi)\varphi(\xi)-(\eta-w(\xi))\varphi^{\prime
}(\xi), \smallskip\\
\dfrac{d\eta}{dt}=\varphi(\xi), \smallskip\\
\xi(0)=x_{0}, \ \eta(0)=y_{0},
\end{cases}
\end{equation}
is given by
\begin{equation}
\xi(t,x_{0})\equiv x_{0},\quad\eta(t,x_{0})=y_{0}+t\varphi(x_{0}).
\label{solution f'(x)=0}%
\end{equation}
Hence, if for some $\alpha<\beta$ we have $\varphi(\alpha)=\varphi^{\prime
}(\alpha)=0$ and $\varphi(\beta)=\varphi^{\prime}(\beta)=0$, then for every
$\alpha<x<\beta$ the curve $(\xi(\cdot,x),\eta(\cdot,x))$ cannot leave the
vertical strip $(\alpha,\beta)\times\mathbb{R}$, otherwise uniqueness for the
initial value problem (\ref{ivp2}) would be violated. In particular, in view
of (\ref{f and itz derivatives zero at endpoints}), if $a<x<b$ then the curve
$(\xi(\cdot,x),\eta(\cdot,x))$ cannot leave the vertical strip $(a,b)\times
\mathbb{R}$.
\end{remark}

\begin{theorem}
\label{theorem existence t0}Let $\varphi$ and $w$ be as above. Given
$s\in[0,1]$ and $x\in\mathbb{R}$, there exists a first time $t_{0}%
=t_{0}(s,x)\geq0$ such that the solution $(\xi(\cdot,x),\eta(\cdot,x))$ of
(\ref{IVP}) intersects the graph of the function $w+s\varphi$ at time $t_{0}$.
Moreover, if $s=0$ or $\varphi(x)=0$ then
\begin{equation}
t_{0}(s,x)=0, \label{t0 when u=0}%
\end{equation}
if $\varphi(x)\neq0$ and $\varphi^{\prime}(x)=w^{\prime}(x)=0$ then
\begin{equation}
t_{0}(s,x)=s, \label{t0 when u'=0}%
\end{equation}
while in all the other cases%
\begin{equation}
0\leq t_{0}(s,x)\leq s. \label{t0 bounded}%
\end{equation}
Finally, $t_{0}$ is of class $C^{2}$ in $[0,1]\times\left\{  x\in
\mathbb{R}:\,\varphi(x)\neq0\right\}  $ and if $\varphi(x)\neq0$, then%
\begin{equation}
\varphi(\xi(t,x))\varphi(x)>0\quad\text{for all }0\leq t\leq t_{0}(s,x).
\label{u(x)u(xi)>0}%
\end{equation}

\end{theorem}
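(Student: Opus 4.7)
The plan is to reduce the intersection condition to a scalar equation. Set $F(t,s,x) := \eta(t,x) - w(\xi(t,x)) - s\varphi(\xi(t,x))$, so that the zeros in $t$ of $F(\cdot,s,x)$ are precisely the intersection times of the trajectory with $\gr(w + s\varphi)$. Since $F(0,s,x) = -s\varphi(x)$, the cases $s = 0$ or $\varphi(x) = 0$ give $t_0 = 0$ at once; for $\varphi(x) = 0$ this is also consistent with (\ref{solution when u=0}), which says that the trajectory is frozen at $(x, w(x))\in\Gamma$.

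Assume now $\varphi(x) \neq 0$. I would introduce the auxiliary quantity $\zeta(t) := \eta(t,x) - w(\xi(t,x))$ and, on any subinterval where $\varphi(\xi(t,x)) \neq 0$, the ratio $g(t) := \zeta(t)/\varphi(\xi(t,x))$, so that $F = \varphi(\xi)(g - s)$. A direct differentiation using (\ref{IVP}) gives $\dot\zeta = \varphi(\xi)(1 + w'(\xi)^{2}) + w'(\xi)\varphi'(\xi)\,\zeta$, and a short algebraic manipulation yields the scalar ODE
\[
\dot g = 1 + \bigl(w'(\xi) + g\,\varphi'(\xi)\bigr)^{2} \geq 1, \qquad g(0) = 0.
\]
Hence $g$ is strictly increasing with $g(t) \geq t$, so it reaches $s$ at some time $t_0 \leq s$, provided the interval $[0,t_0]$ lies in the region where $\varphi(\xi) \neq 0$.

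The main obstacle is precisely to ensure that $\varphi(\xi(\cdot, x))$ does not vanish before $t_0$, which also gives the sign statement (\ref{u(x)u(xi)>0}). Let $T \in (0, +\infty]$ be the first time at which $\varphi(\xi(T,x)) = 0$; on $[0,T)$ the function $g$ is continuous and monotone with $g'\geq 1$, hence bounded on any compact subinterval. If $\zeta(T) \neq 0$, then $g$ is unbounded on $[0,T)$ and must cross the finite value $s$ at some $t_0 < T$, at which $\varphi(\xi(t_0)) \neq 0$, and we are done. If instead $\zeta(T) = 0$, then $(\xi(T), w(\xi(T))) \in \Gamma$ and $\varphi$ vanishes there; inspecting the right-hand side of (\ref{IVP}) shows this point is a fixed point of the system, so by Lipschitz uniqueness for (\ref{IVP}) the trajectory would have to be constant, contradicting $\varphi(x) \neq 0$. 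This also proves (\ref{u(x)u(xi)>0}) on $[0,t_0]$. The degenerate case $\varphi(x) \neq 0$ with $\varphi'(x) = w'(x) = 0$ is disposed of separately via Remark~\ref{remark uniqueness}: the explicit formula (\ref{solution f'(x)=0}) gives $(\xi(t), \eta(t)) = (x, w(x) + t\varphi(x))$, so $\eta(t) = w(x) + s\varphi(x)$ forces $t_0 = s$.

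For the $C^{2}$ regularity of $t_0$ on $[0,1] \times \{\varphi \neq 0\}$, I would apply the implicit function theorem to $F$, which is $C^{2}$ in all its arguments (since $\xi, \eta \in C^{2}$ by smooth dependence on initial data, and $w, \varphi \in C^{3}$). A routine computation shows
\[
\partial_{t} F\big|_{F = 0} = \varphi(\xi)\bigl(1 + (w'(\xi) + s\,\varphi'(\xi))^{2}\bigr),
\]
which is nonzero at $(t_0, s, x)$ because $\varphi(\xi(t_0, x)) \neq 0$ by the previous step. Hence $t_0$ is locally $C^{2}$ near every such $(s,x)$.
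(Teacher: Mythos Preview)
Your argument is correct and rests on the same key quantity as the paper's proof, namely $g(t)=(\eta-w(\xi))/\varphi(\xi)$ (the paper calls it $G(\xi,\eta)$). The paper derives the integral identity
\[
\int_0^t \frac{(\partial_t\xi)^2+(\partial_t\eta)^2}{\varphi^2(\xi)}\,dr=\frac{\eta(t,x)-w(\xi(t,x))}{\varphi(\xi(t,x))}
\]
and then runs a rather lengthy case analysis (on whether $\lim_{t\to T^-}\eta$ is finite, whether $T<\infty$, whether $\xi$ drifts to an endpoint of $\{\varphi\neq0\}$, etc.) to force an intersection. Your differential form $\dot g=1+(w'(\xi)+g\,\varphi'(\xi))^2\ge1$ is exactly the derivative of that identity, and it lets you bypass the entire case analysis: monotonicity of $g$ with $g(0)=0$ gives existence of $t_0$, the bound $t_0\le s$, and the sign statement in one stroke. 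Your treatment of the obstruction $T<\infty$ via the fixed-point/uniqueness argument is also cleaner than the paper's squeezing argument in Substep~1a--1c, though logically equivalent.

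One point deserves an extra sentence. The implicit function theorem produces a $C^2$ function $t_1(s,x)$ near $(s_0,x_0)$ with $F(t_1,s,x)=0$ and $t_1(s_0,x_0)=t_0(s_0,x_0)$; you still need to say why $t_1$ coincides with the \emph{first}-hitting time $t_0$ nearby. The paper spends a paragraph on this (its Step~3), but your setup makes it immediate: since $g(\cdot\,;x)$ is strictly increasing on $[0,T(x))$, the equation $g(t;x)=s$ has a \emph{unique} solution there, namely $t_0(s,x)$, so the locally unique IFT branch through $t_0(s_0,x_0)$ has no choice but to equal $t_0$. Adding that observation closes the argument.
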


\begin{proof}
We begin by proving the existence of $t_{0}$. If $s=0$ or $\varphi(x)=0$, then
$t_{0}(s,x)=0$ by (\ref{IVP}). Property (\ref{t0 when u'=0}) follows from
(\ref{ivp2}) and (\ref{solution f'(x)=0}) with $x_{0}=x$ and $y_{0}=w(x)$.
Thus, in what follows assume that $\varphi(x)\neq0$, $s>0$, and that at least
one of $\varphi^{\prime}(x)$ and $w^{\prime}(x)$ is different from zero. By
(\ref{f and itz derivatives zero at endpoints}) and
Remark~\ref{remark uniqueness}, the curve $(\xi(\cdot,x),\eta(\cdot,x))$
cannot leave the vertical strip $(a,b)\times\mathbb{R}$.\smallskip

\noindent\textbf{Step 1:} Assume that $\varphi(x)>0$ and let $T>0$ be the
first time, if it exists, such that $\varphi(\xi(T,x))=0$, otherwise set
$T:=\infty$. Then $\varphi(\xi(t,x))>0$ for all $0\leq t<T$, and so by
(\ref{IVP}), $\eta(\cdot,x)$ is strictly increasing in $[0,T)$ and there
exists
\begin{equation}
\lim_{t\rightarrow T^{-}}\eta(t,x)=\ell\in(w(x),\infty]. \label{limit eta}%
\end{equation}
If $\ell=\infty$ (and hence $T=\infty$), then the solution $(\xi(\cdot
,x),\eta(\cdot,x))$ of (\ref{IVP}) intersects the graph of the function
$w+s\varphi$, and so $t_{0}$ exists. Thus, in what follows it suffices to
consider the case $\ell<\infty$. Assume, by contradiction, that%
\begin{equation}
\eta(t,x)<w(\xi(t,x))+s\varphi(\xi(t,x))\quad\text{for all }0<t\leq T.
\label{eta below graph}%
\end{equation}

\noindent\textbf{Substep 1a:} We claim that the curve $(\xi(\cdot
,x),\eta(\cdot,x))$ stays above the graph of the function $w$ for all $0<t\leq
T$. Consider the function%
\begin{equation}\label{ir02}
G(\xi,\eta):=\frac{\eta-w(\xi)}{\varphi(\xi)}%
\end{equation}
defined for all $\varphi(\xi)\neq0$ and $\eta\in\mathbb{R}$. Then
\[
\partial_{\xi}G(\xi,\eta)=\frac{-w^{\prime}(\xi)\varphi(\xi)-(\eta
-w(\xi))\varphi^{\prime}(\xi)}{\varphi^{2}(\xi)},\quad\partial_{\eta}%
G(\xi,\eta)=\frac{1}{\varphi(\xi)},
\]
and so by (\ref{IVP}), for all $0<t<T$,
\begin{align*}
\partial_{t}\xi(t,x)  &  =\varphi^{2}(\xi(t,x))\partial_{\xi}G(\xi
(t,x),\eta(t,x)),\\
\partial_{t}\eta(t,x)  &  =\varphi^{2}(\xi(t,x))\partial_{\eta}G(\xi
(t,x),\eta(t,x)).
\end{align*}
It follows that for all $0<t<T$,
\begin{align*}
\frac{\left(  \partial_{t}\xi(t,x)\right)  ^{2}+\left(  \partial_{t}%
\eta(t,x)\right)  ^{2}}{\varphi^{2}(\xi(t,x))}  &  =\partial_{\xi}%
G(\xi(t,x),\eta(t,x))\partial_{t}\xi(t,x)\\
&  \quad+\partial_{\eta}G(\xi(t,x),\eta(t,x))\partial_{t}\eta(t,x)\\
&  =\partial_{t}(G(\xi(t,x),\eta(t,x)).
\end{align*}
Integrating between $0$ and $t$, and using (\ref{IVP}) once more, gives
\begin{equation}
\int_{0}^{t}\frac{\left(  \partial_{t}\xi(r,x)\right)  ^{2}+\left(
\partial_{t}\eta(r,x)\right)  ^{2}}{\varphi^{2}(\xi(r,x))}\,dr=\frac
{\eta(t,x)-w(\xi(t,x))}{\varphi(\xi(t,x))}. \label{conservation law}%
\end{equation}
Since the integrand on the left-hand side is positive for all $0<t<T$, it
follows that $\eta(t,x)>w(\xi(t,x))$ for all $0<t<T$. If $T<\infty$, then also
by (\ref{eta below graph}) we have that
\[
w(\xi(t,x))<\eta(t,x)<w(\xi(t,x))+s\varphi(\xi(t,x)),
\]
and letting $t\rightarrow T^{-}$ gives that $w(\xi(T,x))=\eta(T,x)=w(\xi
(T,x))+s\varphi(\xi(T,x))$, where we have used the fact that $\varphi
(\xi(T,x))=0$. This contradicts (\ref{eta below graph}), and thus establishes the
existence of $t_{0}$ in the case $T<\infty$.

It remains to study the case $T=\infty$. Since $\frac{\left(
\partial_{t}\eta(r,x)\right)  ^{2}}{\varphi^{2}(\xi(r,x))}=1$ by (\ref{IVP}),
it follows from (\ref{conservation law}) that%
\begin{equation}
t\leq\frac{\eta(t,x)-w(\xi(t,x))}{\varphi(\xi(t,x))} \label{bound time}%
\end{equation}
for all $t>0$. Let $\alpha<x<\beta$ be such that $\varphi>0$ in $(\alpha
,\beta)$ and $\varphi(\alpha)=\varphi(\beta)=0$. We claim that there exists%
\begin{equation}
\lim_{t\rightarrow\infty}\xi(t,x)=l\in\{\alpha,\beta\}.
\label{xi goes to endpoints}%
\end{equation}
To see this, note that since $\varphi(\xi(t,x))>0$ for all $t$, we have that%
\[
\alpha\leq l_{1}:=\liminf_{t\rightarrow\infty}\xi(t,x),\quad l_{2}%
:=\limsup_{t\rightarrow\infty}\xi(t,x)\leq\beta.
\]
Assume, by contradiction, that $l_{1}<l_{2}$. Then there exists a sequence
$t_{i}\rightarrow\infty$ such that $\xi(t_{i},x)\rightarrow c\in(\alpha
,\beta)$. Taking $t=t_{i}$ in (\ref{bound time}), and using (\ref{limit eta})
and the fact that $\ell<\infty$, gives
\[
\infty=\lim_{i\rightarrow\infty}t_{i}=\lim_{i\rightarrow\infty}\frac
{\eta(t_{i},x)-w(\xi(t_{i},x))}{\varphi(\xi(t_{i},x))}=\frac{\ell
-w(c)}{\varphi(c)}<\infty,
\]
which is a contradiction. Hence, $l_{1}=l_{2}$. Note that the previous
argument also shows that $l_{1}$ cannot belong to $(\alpha,\beta)$. Hence,
either $l_{1}=\alpha$ or $l_{2}=\beta$.\smallskip

\noindent\textbf{Substep 1b:} We prove the existence of $t_{0}$. Without loss
of generality, assume that $l=\alpha$ (the case $l=\beta$ is similar). Then by
(\ref{eta below graph}) and Substep~1a, we have%
\[
w(\xi(t,x))<\eta(t,x)<w(\xi(t,x))+s\varphi(\xi(t,x))
\]
for all $t>0$. Hence,
\[
0<\frac{\eta(t,x)-w(\xi(t,x))}{\varphi(\xi(t,x))}<s.
\]
Letting $t\rightarrow\infty$ we obtain a contradiction from (\ref{bound time}%
). Therefore, we have proved that condition (\ref{eta below graph}) fails. This
asserts the existence of $t_{0}$.\smallskip

\noindent\textbf{Substep 1c:} We prove (\ref{u(x)u(xi)>0}). It follows from
Substeps~1a and~1b that $t_{0}(s,x)\leq T$, so that
\[
\varphi(\xi(t,x))\varphi(x)>0\quad\text{for all }0\leq t<t_{0}(s,x).
\]
To prove (\ref{u(x)u(xi)>0}), it remains to show that $\varphi(\xi
(t_{0}(s,x),x))>0$. Let $t_{0}:=t_{0}(s,x)$. Assume, by contradiction, that
$\varphi(\xi(t_{0},x))=0$. Then by the definition of $t_{0}(s,x)$ we have
that
\[
\eta(t_{0},x)=w(\xi(t_{0},x)),
\]
which contradicts the fact that the unique solution of the initial value
problem
\[%
\begin{cases}
\dfrac{d\xi}{dt}=-w^{\prime}(\xi)\varphi(\xi)-(\eta-w(\xi))\varphi^{\prime
}(\xi), \smallskip\\
\dfrac{d\eta}{dt}=\varphi(\xi), \smallskip\\
\xi(t_{0})=\xi(t_{0},x), \ \eta(t_{0})=w(\xi(t_{0},x)),
\end{cases}
\]
is given by
\[
\xi_{1}(t)\equiv\xi(t_{0},x),\quad\eta_{1}(t,x_{0})\equiv w(\xi(t_{0},x)).
\]

\noindent\textbf{Step 2:} The case $\varphi(x)<0$ is similar and we omit
it.\smallskip

\noindent\textbf{Step 3:} We prove the regularity of $t_{0}$. Fix
$(s_{0},x_{0})\in\lbrack0,1]\times\mathbb{R}$, with $\varphi(x_{0})\neq0$, and
let $t_{0}:=t_{0}(s_{0},x_{0})$. Assume that $\varphi(x_{0})>0$ (the case
$\varphi(x_{0})<0$ is similar), and let $\alpha<x_{0}<\beta$ be such that
$\varphi>0$ in $(\alpha,\beta)$ and $\varphi(\alpha)=\varphi(\beta)=0$.
Consider the $C^{2}$ function
\[
F(s,t,x):=\eta(t,x)-w(\xi(t,x))-s\varphi(\xi(t,x))
\]
defined in the set
\[
V:=\mathbb{R}\times\mathbb{R}\times(\alpha,\beta).
\]
Then $F(s_{0},t_{0},x_{0})=0$. By (\ref{IVP}) and (\ref{u(x)u(xi)>0}), we have%
\begin{align*}
\partial_{t}F(s_{0},t_{0},x_{0})  &  =\partial_{t}\eta(t_{0},x_{0})-\left[
w^{\prime}(\xi(t_{0},x_{0}))+s_{0}\varphi^{\prime}(\xi(t_{0},x_{0}))\right]
\partial_{t}\xi(t_{0},x_{0})\\
&  =\varphi(\xi(t_{0},x_{0}))\left[  1+\left(  w^{\prime}(\xi(t_{0}%
,x_{0}))+s_{0}\varphi^{\prime}(\xi(t_{0},x_{0}))\right)  ^{2}\right]  >0.
\end{align*}
Thus, we can apply the implicit function theorem to conclude
that there exist $0<r<\min\{\beta-x_{0},x_{0}-\alpha\}$, $\delta>0$, a
function $t_{1}:B((s_{0},x_{0});r)\rightarrow\lbrack t_{0}-\delta,t_{0}%
+\delta]$ of class $C^{2}$ such that $t_{1}(s_{0},x_{0})=t_{0}$ and
\[
F(s,t_{1}(s,x),x)=0\quad\text{for all }(s,x)\in B((s_{0},x_{0});r).
\]
Note that, in view of (\ref{conservation law}), which continues to hold for
$t<0$ small, and the fact that $\varphi>0$, for $t<0$ sufficiently small,
\[
\eta(t,x)<w(\xi(t,x))\leq w(\xi(t,x))+s\varphi(\xi(t,x)),
\]
and so the function $t_{1}$ must be nonnegative. Hence, by the definition of
$t_{0}(s,x)$, we have%
\[
t_{0}(s,x)\leq t_{1}(s,x)\quad\text{for all }(s,x)\in B((s_{0},x_{0});r).
\]
We claim that $t_{1}(s,x)=t_{0}(s,x)$ for all $(s,x)\in B((s_{0},x_{0}%
);r_{1})$ for some $0<r_{1}<r$. Since $t_{0}$ is the first time that the
solution $(\xi(\cdot,x_{0}),\eta(\cdot,x_{0}))$ of (\ref{IVP}) intersects the
graph of the function $w+s_{0}\varphi$, if $t_{0}>0$ we have that
\[
\eta(t,x_{0})<w(\xi(t,x_{0}))+s_{0}\varphi(\xi(t,x_{0}))\quad\text{for all
}0\leq t<t_{0}.
\]
Fix $0<\varepsilon<\delta$ and let
\[
c_{\varepsilon}:=\min_{0\leq t\leq t_{0}-\varepsilon}\left(  w(\xi
(t,x_{0}))+s_{0}\varphi(\xi(t,x_{0}))-\eta(t,x_{0})\right)  >0.
\]
By the regularity of $w$ and $\varphi$ and the continuity of $\xi$ and $\eta$
with respect to initial data, there exists $0<r_{1}<r$ such that
\[
\left\vert w(\xi(t,x))+s\varphi(\xi(t,x))-\eta(t,x)-\left(  w(\xi
(t,x_{0}))+s_{0}\varphi(\xi(t,x_{0}))-\eta(t,x_{0})\right)  \right\vert
\leq\frac{1}{2}c_{\varepsilon}%
\]
for all $(s,x)\in B((s_{0},x_{0});r_{1})$ and for all $t\in\lbrack
0,t_{0}-\varepsilon]$. Hence,
\[
\eta(t,x)<w(\xi(t,x))+s\varphi(\xi(t,x))\quad\text{for all }0\leq
t<t_{0}-\varepsilon
\]
for all $(s,x)\in B((s_{0},x_{0});r_{1})$. This implies that%
\begin{equation}
t_{0}-\varepsilon\leq t_{0}(s,x) \label{t0 below}%
\end{equation}
for all $(s,x)\in B((s_{0},x_{0});r_{1})$. If $t_{0}=0$, then (\ref{t0 below})
continues to hold since $t_{0}(s,x)\geq0$. On the other hand, since $t_{1}$ is
continuous and $t_{1}(s_{0},x_{0})=t_{0}$, by taking $r_{1}$ smaller if
necessary, we have that $t_{0}-\varepsilon\leq t_{1}(s,x)\leq t_{0}%
+\varepsilon$ for all $(s,x)\in B((s_{0},x_{0});r_{1})$. Because $t_{0}(s,x)\leq
t_{1}(s,x)$, also by (\ref{t0 below}), we have that
\[
t_{0}-\varepsilon\leq t_{0}(s,x)\leq t_{0}+\varepsilon
\]
for all $(s,x)\in B((s_{0},x_{0});r_{1})$. Using the fact that $\varepsilon<\delta$, it
follows from the uniqueness of the implicit function that $t_{0}%
(s,x)=t_{1}(s,x)$ for all $(s,x)\in B((s_{0},x_{0});r_{1})$. In turn,
\[
F(s,t_{0}(s,x),x)=0\quad\text{for all }(s,x)\in B((s_{0},x_{0});r_{1}),
\]
and so, by (\ref{IVP}) and the definition of $t_{0}$, we have%
\begin{align}
\partial_{x}t_{0}(s,x)  &  =-\frac{\partial_{x}F(s,t_{0}(s,x),x)}{\partial
_{t}F(s,t_{0}(s,x),x)}\nonumber\\
&  =\frac{\left[  w^{\prime}(\xi(t_{0}(s,x),x))+s\varphi^{\prime}(\xi
(t_{0}(s,x),x))\right]  \partial_{x}\xi(t_{0}(s,x),x)-\partial_{x}\eta
(t_{0}(s,x),x)}{\varphi(\xi(t_{0}(s,x),x))\left[  1+\left(  w^{\prime}%
(\xi(t_{0}(s,x),x))+s\varphi^{\prime}(\xi(t_{0}(s,x),x))\right)  ^{2}\right]
}. \label{partial x of t0}%
\end{align}

\noindent\textbf{Step 4:} It remains to prove (\ref{t0 bounded}). By
(\ref{bound time}) and the definition of $t_{0}$, we have that\emph{ }%
\[
0\leq t_{0}(s,x)\leq\frac{\eta(t_{0}(s,x),x)-w(\xi(t_{0}(s,x),x))}{\varphi
(\xi(t_{0}(s,x),x))}=s.
\]
This concludes the proof.
\end{proof}

\begin{remark}
By (\ref{IVP}), if $x\in\mathbb{R}$ and $0\leq t\leq1$, then%
\[
|\eta(t,x)-w(x)|=\left\vert \int_{0}^{t}\varphi(\xi(r,x))\,dr\right\vert
\leq\left\Vert \varphi\right\Vert _{C\left(  \left[  a,b\right]  \right)  }%
\]
and, in turn,
\begin{align*}
\left\vert \xi(t,x)-x\right\vert  &  =\left\vert \int_{0}^{t}\left[
-w^{\prime}(\xi(r,x))\varphi(\xi(r,x))-(\eta(r,x)-w(\xi(r,x)))\varphi^{\prime
}(\xi(r,x))\right]  \,dr\right\vert \\
&  \leq3\left\Vert w\right\Vert _{C^{1}([a,b])}\left\Vert \varphi\right\Vert
_{C^{1}([a,b])}+\left\Vert \varphi\right\Vert _{C([a,b])}\left\Vert
\varphi^{\prime}\right\Vert _{C([a,b])}.
\end{align*}
Since $0\leq t_{0}(s,x)\leq s\leq1$ by Theorem~\ref{theorem existence t0}, it
follows that
\begin{eqnarray}
& \left\vert \xi(t,x)-x\right\vert    \leq3\left(  \left\Vert w\right\Vert
_{C^{1}([a,b])}+\left\Vert \varphi\right\Vert _{C([a,b])}\right)  \left\Vert
\varphi\right\Vert _{C^{1}([a,b])}, \label{bound xi-x and eta in terms of f}\smallskip\\
& \left\vert \eta(t,x)-w(x)\right\vert    \leq\left\Vert \varphi\right\Vert
_{C([a,b])}\nonumber
\end{eqnarray}
for all $s\in[0,1]$, $x\in\mathbb{R}$, and $0\leq t\leq t_{0}(s,x)$.
\end{remark}

Given $s\in\lbrack0,1]$ and $x\in\mathbb{R}$, we define the function
$g:[0,1]\times\mathbb{R}\rightarrow\mathbb{R}$ by
\begin{equation}
g(s,x):=\xi(t_{0}(s,x),x), \label{g}%
\end{equation}
where $t_{0}(s,x)$ is given by Theorem~\ref{theorem existence t0}. Note that
by the definition of $t_{0}(s,x)$, if $s=0$ or $\varphi(x)=0$, then
$t_{0}(s,x)=0$, and so%
\begin{equation}
g(s,x)=\xi(0,x)=x. \label{g at s=0}%
\end{equation}
Moreover, since $(\xi(t_{0}(s,x),x),\eta(t_{0}(s,x),x))$ belongs to the graph
of $w+s\varphi$, we have that
\begin{align}
\eta(t_{0}(s,x),x)  &  =w(\xi(t_{0}(s,x),x))+s\varphi(\xi(t_{0}%
(s,x),x))\label{eta at time t0}\\
&  =w(g(s,x))+s\varphi(g(s,x)).\nonumber
\end{align}
We will use this property in the sequel.

The following theorem states that the function $g$ is of class $C^{2}$. As
part of the proof we will actually show that the function $t_{0}$ is
\emph{discontinuous at all points} $(s,x_{0})$ with $\varphi(x_{0})=0$ and
$\varphi\neq0$ near $x_{0}$. Hence, establishing the regularity of $g$ is far
from trivial. The proof of Theorem~\ref{theorem regularity g} is rather
lengthy and will be presented in the appendix.

\begin{theorem}
\label{theorem regularity g}Let $\varphi$ and $w$ be as above. Then the
function $g:[0,1]\times\mathbb{R}\rightarrow\mathbb{R}$ defined in (\ref{g})
is of class $C^{2}$.
\end{theorem}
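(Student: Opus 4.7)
The plan is to sidestep the delicate direct analysis of $t_0(s,x)$ by deriving a first-order Cauchy problem in $s$ (with $x$ as parameter) whose right-hand side is globally $C^2$, and then invoking the standard smooth dependence of ODE solutions on initial data and parameters. On the open set $\{\varphi(x) \neq 0\}$, Theorem~\ref{theorem existence t0} gives $t_0 \in C^2$, so I may differentiate both (\ref{g}) and (\ref{eta at time t0}) in $s$. Substituting the right-hand sides of (\ref{IVP}) at $t = t_0(s,x)$, where $\xi = g$ and $\eta - w(\xi) = s\varphi(g)$, yields the linear system
\[
\varphi(g)\,\partial_s t_0 - \bigl(w'(g) + s\varphi'(g)\bigr)\,\partial_s g = \varphi(g), \qquad
\varphi(g)\bigl(w'(g) + s\varphi'(g)\bigr)\,\partial_s t_0 + \partial_s g = 0
\]
in the unknowns $(\partial_s t_0, \partial_s g)$. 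Property (\ref{u(x)u(xi)>0}) guarantees $\varphi(g(s,x)) \neq 0$ for all $s \in [0,1]$ whenever $\varphi(x) \neq 0$, so the determinant $\varphi(g)(1 + (w'(g) + s\varphi'(g))^2)$ of this system is nonzero. Cramer's rule then gives
\[
\partial_s g = F(s, g), \qquad g(0, x) = x, \qquad
F(s, p) := -\frac{\varphi(p)\bigl(w'(p) + s\varphi'(p)\bigr)}{1 + \bigl(w'(p) + s\varphi'(p)\bigr)^2}.
\]

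Because $\varphi, w \in C^3(\mathbb{R})$ and the denominator is bounded below by $1$, one checks that $F \in C^2([0,1] \times \mathbb{R})$. The classical smooth-dependence theorem for ODEs produces a unique solution $\tilde g \in C^2([0,1] \times \mathbb{R})$ to this Cauchy problem. I then identify $g = \tilde g$ pointwise: on $\{\varphi(x) \neq 0\}$ the derivation above shows that $g(\cdot, x)$ solves the same Cauchy problem, so uniqueness gives $g(s,x) = \tilde g(s, x)$; on $\{\varphi(x) = 0\}$, equation (\ref{g at s=0}) gives $g(s, x) = x$, and since $F(s, x) = 0$ for such $x$, also $\tilde g(s, x) = x$. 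Thus $g \equiv \tilde g$ on $[0,1] \times \mathbb{R}$ and $g \in C^2$.

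The main obstacle is justifying the ODE derivation without implicitly using regularity of $g$ across the singular set $\{\varphi = 0\}$, where $t_0$ is in general discontinuous and the naive chain rule for $g(s,x) = \xi(t_0(s,x), x)$ fails. The ODE approach finesses this by deriving the equation only where $t_0$ is smooth, transferring the regularity question onto the much better-behaved object $\tilde g$, and then recovering equality everywhere from a purely pointwise identification. The only nontrivial inputs used are Theorem~\ref{theorem existence t0} (for $C^2$-regularity of $t_0$ on $\{\varphi \neq 0\}$) and property (\ref{u(x)u(xi)>0}) (to keep $\varphi(g(s,x))$ bounded away from zero), both already at hand.
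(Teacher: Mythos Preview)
Your proof is correct and takes a genuinely different route from the paper's.

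The paper also derives the identity $\partial_s g = F(s,g)$ (this is exactly formula (\ref{partial s of g (new)}) in Step~1 of the Appendix), but it does not exploit it the way you do. Instead, the paper proceeds over five further steps to compute by hand every first and second partial derivative of $g$ and to verify, via explicit limit calculations, their existence and continuity at each point $(s_0,x_0)$ with $\varphi(x_0)=0$; this uses detailed information about $\partial_x\xi$, $\partial_x\eta$, $\partial_x^2\xi$, $\partial_x^2\eta$ along the characteristics, L'H\^{o}pital arguments, and the polynomial structure of $\varphi$ near its zeros. Your observation that $F\in C^2([0,1]\times\mathbb{R})$ (since $\varphi,w\in C^3$ and the denominator is $\geq 1$) allows you to bypass all of this: the classical smooth-dependence theorem gives a $C^2$ flow $\tilde g$, and the pointwise identification $g=\tilde g$ is immediate from ODE uniqueness on $\{\varphi\neq0\}$ and from $F(s,x)=0$ on $\{\varphi=0\}$. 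Global existence of $\tilde g$ on $[0,1]$ is automatic since $|F|\leq\tfrac12\|\varphi\|_{C^0}$.

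What each approach buys: yours is far shorter and more conceptual, and it makes transparent why the discontinuity of $t_0$ is a red herring for the regularity of $g$. The paper's direct computation, on the other hand, produces as a byproduct explicit formulas for $\partial_x g$, $\partial_s g$, and $\partial_x g(s_0,x_0)$ at zeros of $\varphi$ (equations (\ref{partial x of g (new)}), (\ref{partial s of g (new)}), (\ref{partial g outside [a,b]}), (\ref{partial s of g at a}), (\ref{partial x of g at x=a})), and these are quoted later in Theorem~\ref{theorem norm of g and h}, in the proof of Theorem~\ref{theorem diffeomorphism}, and in Proposition~\ref{proposition tangential zero}. Within your framework these formulas are recoverable too: $\partial_s g = F(s,g)$ directly, and $\partial_x g$ from the variational equation $\partial_s(\partial_x g)=\partial_p F(s,g)\,\partial_x g$, $\partial_x g(0,x)=1$, which at a zero $x_0$ of $\varphi$ integrates in closed form to (\ref{partial x of g at x=a}). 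So nothing is lost, but you should be aware that in the context of the full paper a few lines would be needed to extract those formulas before they are cited.
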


Define the function $h:[0,1]\times\mathbb{R}\rightarrow\mathbb{R}$ by%
\begin{equation}
h(s,x):=\eta(t_{0}(s,x),x), \label{h definition}%
\end{equation}
where $t_{0}(s,x)$ is given by Theorem~\ref{theorem existence t0}. Note that,
by (\ref{eta at time t0}),
\begin{equation}
h(s,x)=w(g(s,x))+s\varphi(g(s,x)). \label{h}%
\end{equation}
Thus in view of Theorem~\ref{theorem regularity g}, the function $h$ is of
class $C^{2}$. Moreover, by (\ref{g at s=0}), if $s=0$ or $\varphi(x)=0$, then%
\begin{equation}
h(s,x)=w(x). \label{h at s=0}%
\end{equation}

\begin{theorem}
\label{theorem graph}Let $\varphi$ and $w$ be as above. Then for every
$s\in\lbrack0,1]$,%
\[
\left\{  (g(s,x),h(s,x)):\,x\in\lbrack a,b]\right\}  =\left\{
(x,w(x)+s\varphi(x)):\,x\in\lbrack a,b]\right\}  .
\]

\end{theorem}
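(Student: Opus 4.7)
The plan is to prove the two inclusions separately, with the inclusion $\subset$ being essentially a restatement of the definitions and the inclusion $\supset$ following from an intermediate value theorem argument using the boundary behavior of $g$.

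First, for the inclusion $\subset$, I would simply invoke the identity \eqref{h}, which says
\[
h(s,x)=w(g(s,x))+s\varphi(g(s,x)),
\]
so that the point $(g(s,x),h(s,x))$ lies on the graph of $w+s\varphi$ by construction. It also suffices to observe that $g(s,x)\in[a,b]$ whenever $x\in[a,b]$: for $x\in\{a,b\}$, one has $\varphi(x)=0$ and hence $g(s,x)=x$ by \eqref{g at s=0}; for $x\in(a,b)$ with $\varphi(x)\ne 0$, Remark~\ref{remark uniqueness} combined with \eqref{f and itz derivatives zero at endpoints} confines the characteristic $\xi(\cdot,x)$ to the vertical strip $(a,b)\times\mathbb{R}$, so that $g(s,x)=\xi(t_{0}(s,x),x)\in(a,b)$.

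For the reverse inclusion $\supset$, the strategy is to show that $x\mapsto g(s,x)$ is a continuous map of $[a,b]$ into itself whose values at the endpoints of $[a,b]$ are $a$ and $b$, and then to conclude surjectivity by the intermediate value theorem. Continuity (indeed $C^{2}$ regularity) of $g(s,\cdot)$ on $[a,b]$ is precisely the content of Theorem~\ref{theorem regularity g}. Since $\varphi(a)=\varphi(b)=0$, equation \eqref{g at s=0} yields $g(s,a)=a$ and $g(s,b)=b$. Therefore, given any $y\in[a,b]$, the intermediate value theorem produces $x\in[a,b]$ such that $g(s,x)=y$, and then \eqref{h} gives
\[
h(s,x)=w(g(s,x))+s\varphi(g(s,x))=w(y)+s\varphi(y),
\]
so the arbitrary point $(y,w(y)+s\varphi(y))$ of the target graph lies in the image, as required.

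I do not expect any genuine obstacle: the substantive work has already been done in the preceding theorems. The only subtle point is that $t_{0}$ may be discontinuous at points where $\varphi$ vanishes (as announced before Theorem~\ref{theorem regularity g}), so it is essential that continuity of $g$ is used in its already-established $C^{2}$ form on all of $[0,1]\times\mathbb{R}$ rather than derived from $\xi$ and $t_{0}$ separately. With that in hand, the argument reduces to the elementary IVT step outlined above.
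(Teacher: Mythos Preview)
Your proof is correct but takes a genuinely different route from the paper's for the nontrivial inclusion $\supset$. The paper does not invoke Theorem~\ref{theorem regularity g} at all here: instead, given a target point $(x_{0},w(x_{0})+s_{0}\varphi(x_{0}))$ with $\varphi(x_{0})\neq0$, it runs the \emph{reversed} characteristic system (same ODE with signs flipped) from that point, proves by an argument parallel to Theorem~\ref{theorem existence t0} that this backward trajectory must hit the graph of $w$ at some first time $t_{1}$, and then sets $x:=\xi_{0}(t_{1})$ so that time reversal identifies $(\xi(t_{0}(s_{0},x),x),\eta(t_{0}(s_{0},x),x))$ with the target. Your argument replaces this ODE analysis by the one-line observation that $g(s,\cdot)$ is continuous with $g(s,a)=a$, $g(s,b)=b$, and applies the intermediate value theorem. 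This is considerably shorter and more transparent, but it imports the continuity of $g$ from Theorem~\ref{theorem regularity g}, whose proof (deferred to the appendix) is long and delicate precisely because $t_{0}$ is discontinuous. The paper's approach, by contrast, is self-contained at this point and keeps Theorem~\ref{theorem graph} logically independent of the regularity theorem. One minor omission in your $\subset$ argument: you should also note that for $x\in(a,b)$ with $\varphi(x)=0$ one has $g(s,x)=x\in(a,b)$ directly from \eqref{g at s=0}; as written you only treat the endpoints and the case $\varphi(x)\neq0$.
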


\begin{proof}
Given $s_{0}\in\lbrack0,1]$ and $x_{0}\in\lbrack a,b]$, we want to find
$x\in\lbrack a,b]$ such that%
\[
(\xi(t_{0}(s_{0},x),x),\eta(t_{0}(s_{0},x),x))=(x_{0},w(x_{0})+s_{0}%
\varphi(x_{0})).
\]
If $s_{0}=0$ or $\varphi(x_{0})=0$, then by (\ref{g at s=0}) and
(\ref{h at s=0}), $g(s_{0},x_{0})=x_{0}$ and $h(s_{0},x_{0})=w(x_{0})$ and so
there is nothing to prove. Therefore, also by
(\ref{f and itz derivatives zero at endpoints}), in what follows we assume
that $s_{0}>0$, $x_{0}\in(a,b)$, and $\varphi(x_{0})\neq0$. Assume further that
$\varphi(x_{0})>0$ (the case $\varphi(x_{0})<0$ is similar).

Consider the initial value problem
\begin{equation}%
\begin{cases}
\dfrac{d\xi}{dt}=w^{\prime}(\xi)\varphi(\xi)+(\eta-w(\xi))\varphi^{\prime}%
(\xi),\smallskip\\
\dfrac{d\eta}{dt}=-\varphi(\xi),\smallskip\\
\xi(0)=x_{0},\ \eta(0)=w(x_{0})+s_{0}\varphi(x_{0}).
\end{cases}
\label{IVP 2}%
\end{equation}
Reasoning as for (\ref{IVP}), we have that (\ref{IVP 2}) admits a unique
solution $(\xi_{0},\eta_{0})$ defined for all $t\in\mathbb{R}$. We claim that
$(\xi_{0},\eta_{0})$ intersects the graph of $w$ at some time $t_{1}>0$.

For every $y\in\mathbb{R}$ the functions
\begin{align*}
\xi_{1}(t)  &  \equiv a,\quad\eta_{1}(t) \equiv y,\\
\xi_{2}(t)  &  \equiv b,\quad\eta_{2}(t) \equiv y
\end{align*}
are solutions of the differential system in (\ref{IVP 2}) with $\xi(0)=a$,
$\eta(0)=y$, and $\xi(0)=b$, $\eta(0)=y$, respectively. Hence, by uniqueness
of (\ref{IVP 2}), we conclude that the curve $(\xi_{0},\eta_{0})$ cannot leave
the vertical strip $(a,b)\times\mathbb{R}$.

Let $T>0$ be the first time, if it exists, such that $\varphi(\xi_{0}(T))=0$,
otherwise set $T:=\infty$. Then by (\ref{IVP 2}) the function $\eta_{0}$ is
strictly decreasing in $[0,T)$, and so there exists
\[
\lim_{t\rightarrow T}\eta_{0}(t)=l_{2}\in[-\infty,w(x_{0}) +s_{0}\varphi
(x_{0})).
\]
If $l_{2}=-\infty$ (and hence $T=\infty$), then there exists a time $t_{1}>0$
such that $(\xi_{0},\eta_{0})$ intersects the graph of $w$. Thus, assume that
$l_{2}\in\mathbb{R}$ and that
\begin{equation}
w(\xi_{0}(t)) <\eta_{0} ( t) \quad\text{for all }0\leq t\leq T. \label{5000}%
\end{equation}
Reasoning as in Substep~1a of the proof of Theorem~\ref{theorem existence t0},
with $G$ in (\ref{ir02}) replaced by
\[
G( \xi,\eta) :=\frac{w( \xi) -\eta}{\varphi(\xi)},
\]
we have that
\begin{align*}
\int_{0}^{t}\frac{( \partial_{t}\xi_{0}( r ))^{2}+ (\partial_{t}\eta_{0} ( r
))^{2}}{\varphi^{2}( \xi_{0}(r ) )}\,dr  &  =G ( \xi_{0} ( t ) ,\eta_{0} ( t )
) -G ( x_{0},w ( x_{0}) +s_{0}\varphi(x_{0}))\\
&  =\frac{w ( \xi_{0}( t)) -\eta_{0}( t)}{\varphi( \xi_{0}( t)) }+s_{0}%
\end{align*}
for all $0\leq t<T$. Since the integrand on the left-hand side is positive for
$t>0$, it follows that $\eta_{0}(t) < w( \xi_{0}( t)) +s_{0}\varphi( \xi
_{0}(t))$ for all $0< t<T$. As in Substep~1a of the proof of
Theorem~\ref{theorem existence t0}, if $T<\infty$ then we obtain a contradiction to
(\ref{5000}). Thus, we can assume that $T=\infty$. As in Subtep 1a of the
proof of Theorem~\ref{theorem existence t0}, the inequality
\[
t\leq\frac{w (\xi_{0} ( t )) -\eta_{0} (t) }{\varphi( \xi_{0}( t )) }+s_{0}
\]
for all $t>0$ implies (\ref{xi goes to endpoints}). The existence of $t_{1}$
follows exactly as in Substep~1b of the proof of
Theorem~\ref{theorem existence t0}.

This shows that $(\xi_{0},\eta_{0})$ intersects the graph of $w$ at some time
$t_{1}>0$. Assume that $t_{1}$ is the first such time. Define $x:=\xi
_{0}(t_{1})$. Then the function $(\xi_{0}(t_{1}-\cdot),\eta_{0}(t_{1}-\cdot))$
is the solution $(\xi(\cdot,x),\eta(\cdot,x))$ of the initial value problem
(\ref{IVP}), and at time $t=t_{1}$ it touches the graph of $w+s_{0}\varphi$ at
the point $(x_{0},w( x_{0}) +s_{0}\varphi(x_{0}))$. Hence, $t_{0}%
(s_{0},x)=t_{1}$ and
\[
(\xi(t_{0}(s,x),x),\eta(t_{0}(s,x),x))=(x_{0},w( x_{0}) +s_{0}\varphi
(x_{0})).
\]
This completes the proof.
\end{proof}

To estimate the norm of $\partial_{x}g$ and $\partial_{x}h$ we need the
following preliminary result.

\begin{proposition}
\label{proposition partial x} Let $\varphi$ and $w$ be as above with
$\left\Vert \varphi\right\Vert _{C^{2}([a,b])}<1$. Then
\[
|\partial_{x}\xi( t,x) -1|\leq C\left\Vert \varphi\right\Vert _{C^{2}%
([a,b])},\quad\left\vert \partial_{x}\eta( t,x) -w^{\prime}( x) \right\vert
\leq C\left\Vert \varphi\right\Vert _{C^{2}([a,b])}%
\]
for all $x\in\mathbb{R}$ and $0\leq t\leq t_{0}(s,x) $, where $C>0$ depends on
$\left\Vert w\right\Vert _{C^{2}([a,b])}$.
\end{proposition}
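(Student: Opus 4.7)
The plan is to derive the variational equations satisfied by $\partial_{x}\xi$ and $\partial_{x}\eta$, observe that they form a linear ODE system whose coefficients and forcing terms are $O(\|\varphi\|_{C^{2}([a,b])})$, and then apply Gronwall's inequality. The case $x\notin[a,b]$ is handled by (\ref{solution when u=0}), which gives $\xi\equiv x$ and $\eta\equiv w(x)$, so the estimates are trivial; hence in what follows we may assume $x\in[a,b]$, in which case Remark~\ref{remark uniqueness} guarantees $\xi(t,x)\in[a,b]$ for $0\le t\le t_{0}(s,x)$.

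Differentiating (\ref{IVP}) in $x$, and noting that the two occurrences of $w'(\xi)\varphi'(\xi)\,\partial_{x}\xi$ cancel, one obtains the linear system
\begin{equation*}
\frac{d}{dt}\begin{pmatrix}\partial_{x}\xi\\ \partial_{x}\eta\end{pmatrix}
=M(t,x)\begin{pmatrix}\partial_{x}\xi\\ \partial_{x}\eta\end{pmatrix},\qquad
M=\begin{pmatrix}-w''(\xi)\varphi(\xi)-(\eta-w(\xi))\varphi''(\xi) & -\varphi'(\xi)\\ \varphi'(\xi) & 0\end{pmatrix},
\end{equation*}
with initial data $\partial_{x}\xi(0,x)=1$, $\partial_{x}\eta(0,x)=w'(x)$. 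Using (\ref{bound xi-x and eta in terms of f}), which gives $|\eta(t,x)-w(\xi(t,x))|\le|\eta-w(x)|+|w(x)-w(\xi)|\le C\|\varphi\|_{C^{1}([a,b])}$ for $0\le t\le t_{0}(s,x)$, every entry of $M$ is bounded by $C\|\varphi\|_{C^{2}([a,b])}$, with $C$ depending only on $\|w\|_{C^{2}([a,b])}$.

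Now set $p(t):=\partial_{x}\xi(t,x)-1$ and $q(t):=\partial_{x}\eta(t,x)-w'(x)$, so $p(0)=q(0)=0$. Plugging $\partial_{x}\xi=1+p$, $\partial_{x}\eta=w'(x)+q$ into the system yields
\begin{equation*}
p'=\bigl(M_{11}+M_{12}w'(x)\bigr)+M_{11}p+M_{12}q,\qquad q'=M_{21}+M_{21}p,
\end{equation*}
where the inhomogeneous terms $M_{11}+M_{12}w'(x)$ and $M_{21}$ are again bounded by $C\|\varphi\|_{C^{2}([a,b])}$. Letting $v(t):=|p(t)|+|q(t)|$, we obtain the scalar differential inequality
\begin{equation*}
v'(t)\le C\|\varphi\|_{C^{2}([a,b])}\bigl(1+v(t)\bigr),\qquad v(0)=0.
\end{equation*}

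Integration gives $1+v(t)\le\exp\bigl(C\|\varphi\|_{C^{2}([a,b])}t\bigr)$, hence $v(t)\le e^{C\|\varphi\|_{C^{2}([a,b])}t}-1$. Since $0\le t\le t_{0}(s,x)\le 1$ by Theorem~\ref{theorem existence t0} and $\|\varphi\|_{C^{2}([a,b])}<1$ by hypothesis, the elementary inequality $e^{a}-1\le a\,e$ for $0\le a\le 1$ yields $v(t)\le C'\|\varphi\|_{C^{2}([a,b])}$, which is precisely the claimed bound on both $|\partial_{x}\xi-1|$ and $|\partial_{x}\eta-w'|$. There is no serious obstacle: the only point requiring care is the bound on $|\eta-w(\xi)|$ along the characteristic, which is already supplied by the preceding remark.
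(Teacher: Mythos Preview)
Your proof is correct and follows essentially the same approach as the paper: differentiate the system (\ref{IVP}) in $x$ to obtain the linear variational equations for $(\partial_{x}\xi,\partial_{x}\eta)$, bound the coefficients using the a priori estimate on $|\eta-w(\xi)|$, and apply Gronwall. The organization differs slightly: the paper first uses Gronwall on $|\partial_{x}\xi|^{2}+|\partial_{x}\eta|^{2}$ to obtain a uniform bound, and then integrates the variational equations to extract $|\partial_{x}\eta-w'(x)|$ and $|\partial_{x}\xi-1|$ in a second step; you instead work directly with the deviations $p=\partial_{x}\xi-1$ and $q=\partial_{x}\eta-w'(x)$ and close in a single Gronwall argument, which is marginally cleaner. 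One cosmetic point: the inequality $e^{a}-1\le a e$ you invoke requires $a\le 1$, whereas $a=C\|\varphi\|_{C^{2}}t$ with $C=C(\|w\|_{C^{2}})$ need not satisfy this; but since $e^{a}-1\le a e^{a}\le a e^{C}$ for $0\le a\le C$, the conclusion $v(t)\le C'\|\varphi\|_{C^{2}}$ still follows with $C'$ depending only on $\|w\|_{C^{2}}$.
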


\begin{proof}
Differentiating (\ref{IVP}) with respect to $x$, we have that%
\[
\partial_{t}\left(  \left\vert \partial_{x}\xi\right\vert ^{2}+\left\vert
\partial_{x}\eta\right\vert ^{2}\right)  =-2[w^{\prime\prime}(\xi)\varphi
(\xi)+ ( \eta-w(\xi)) \varphi^{\prime\prime}(\xi)]\left\vert \partial_{x}%
\xi\right\vert ^{2}.
\]
If $\varphi( x) =0$, then by (\ref{solution when u=0}) the right-hand side of
the previous equality is identically equal to zero. If $\varphi( x) \neq0$ then
assume that $\varphi( x) >0$ (the case $\varphi( x) <0$ is similar). Using the fact that
\[
w(\xi( t,x) )\leq\eta( t,x) \leq w(\xi( t,x) )+s\varphi(\xi( t,x) )
\]
for all $0\leq t\leq t_{0}( s,x)$ (see Step~1 of the proof of
Theorem~\ref{theorem existence t0}), we obtain%
\begin{align*}
&  \left\vert \partial_{x}\xi( t,x) \right\vert ^{2}+\left\vert \partial
_{x}\eta( t,x) \right\vert ^{2}\leq1+\left(  w^{\prime}( x) \right)  ^{2}\\
&  \quad+2\left\Vert \varphi\right\Vert _{C([a,b])}\left(  \left\Vert
w^{\prime\prime}\right\Vert _{C([a,b])}+\left\Vert \varphi^{\prime\prime
}\right\Vert _{C([a,b])}\right)  \int_{0}^{t}\left(  \left\vert \partial
_{x}\xi( r,x) \right\vert ^{2}+\left\vert \partial_{x}\eta( r,x) \right\vert
^{2}\right)  \,dr
\end{align*}
for all $0\leq t\leq t_{0}( s,x)$. By Gronwall's inequality and the facts that
$t_{0}\leq1$ by Theorem~\ref{theorem existence t0} and $\left\Vert
\varphi\right\Vert _{C^{2}([a,b])}<1$, we deduce that
\begin{align*}
&  \left\vert \partial_{x}\xi(t,x) \right\vert ^{2}+\left\vert \partial
_{x}\eta(t,x) \right\vert ^{2}\\
&  \quad\leq\left(  1+\left\Vert w^{\prime}\right\Vert _{C([a,b])}^{2}\right)
\exp\left(  2\left\Vert w^{\prime\prime}\right\Vert _{C([a,b])}+2\right)
\end{align*}
for $0\leq t\leq t_{0}(s,x)$. In turn,
\begin{align}
&  \left\vert \partial_{x}\eta(t,x) -w^{\prime}(x) \right\vert \leq\int%
_{0}^{t}\left\vert \varphi^{\prime}(\xi(r,x) )\partial_{x}\xi(r,x) \right\vert
\,dr\label{T0}\\
&  \quad\leq\left\Vert \varphi^{\prime}\right\Vert _{C([a,b])}\left(
1+\left\Vert w^{\prime}\right\Vert _{C([a,b])}\right)  \exp\left(  \left\Vert
w^{\prime\prime}\right\Vert _{C([a,b])}+1\right)  .\nonumber
\end{align}
This implies that%
\begin{align*}
&  \left\vert \partial_{x}\xi(t,x) -1\right\vert \\
&  \leq\int_{0}^{t} \big|[w^{\prime\prime}(\xi(r,x) )\varphi(\xi(r,x) )+ (
\eta(r,x) -w(\xi(r,x) )) \varphi^{\prime\prime}(\xi(r,x) )]\partial_{x}%
\xi(r,x)\\
&  \quad+\varphi^{\prime}(\xi(r,x) )\partial_{x}\eta( r,x) \big|\,dr\\
&  \leq\left\Vert \varphi\right\Vert _{C([a,b])}\left(  \left\Vert
w^{\prime\prime}\right\Vert _{C([a,b])}+\left\Vert \varphi^{\prime\prime
}\right\Vert _{C([a,b])}\right) \\
&  \quad\times\left(  1+\left\Vert w^{\prime}\right\Vert _{C([a,b])}\right)
\exp\left(  \left\Vert w^{\prime\prime}\right\Vert _{C([a,b])}+1\right) \\
&  \quad+\left\Vert \varphi^{\prime}\right\Vert _{C([a,b])}\left(
1+\left\Vert w^{\prime}\right\Vert _{C([a,b])}\right)  \exp\left(  \left\Vert
w^{\prime\prime}\right\Vert _{C([a,b])}+1\right)  .
\end{align*}
Since $\left\Vert \varphi\right\Vert _{C^{2}([a,b])}<1$, this concludes the proof.
\end{proof}

\begin{theorem}
\label{theorem norm of g and h}Let $\varphi$ and $w$ be as above with
$\left\Vert \varphi\right\Vert _{C^{2}([a,b])}<1$. Then%
\begin{align}
&  \left\vert \partial_{x}g(s,x) -1\right\vert \leq C_{0}\left\Vert
\varphi\right\Vert _{C^{2}([a,b])},\label{g x estimate}\\
&  \left\vert \partial_{x}h(s,x) -w^{\prime}(x) \right\vert \leq
C_{0}\left\Vert \varphi\right\Vert _{C^{2}([a,b])} \label{h x estimate}%
\end{align}
for all $(s,x) \in[0,1]\times\mathbb{R}$, where $C_{0}>0$ depends on
$\left\Vert w\right\Vert _{C^{2}([a,b])}$.
\end{theorem}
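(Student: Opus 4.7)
The plan is to compute $\partial_x g$ and $\partial_x h$ explicitly and then estimate each summand using the earlier Proposition~\ref{proposition partial x} together with the smallness bound on $|\xi(t,x)-x|$ and $|\eta(t,x)-w(x)|$ from (\ref{bound xi-x and eta in terms of f}). The case $\varphi(x)=0$ is handled separately using (\ref{g at s=0}) and (\ref{h at s=0}).

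First I would treat the case $\varphi(x)\neq 0$. Writing $g(s,x)=\xi(t_{0}(s,x),x)$ and differentiating with respect to $x$ gives
\[
\partial_{x}g=\partial_{t}\xi(t_{0},x)\,\partial_{x}t_{0}+\partial_{x}\xi(t_{0},x).
\]
From (\ref{IVP}) together with the identity $\eta(t_{0},x)-w(\xi(t_{0},x))=s\varphi(\xi(t_{0},x))$ (which is just (\ref{eta at time t0})) I would obtain
\[
\partial_{t}\xi(t_{0},x)=-\varphi(\xi(t_{0},x))\bigl[w^{\prime}(\xi(t_{0},x))+s\varphi^{\prime}(\xi(t_{0},x))\bigr].
\]
Substituting this and the explicit formula (\ref{partial x of t0}) for $\partial_{x}t_{0}$, the apparently singular factor $1/\varphi(\xi(t_{0},x))$ cancels. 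After collecting terms (writing $\xi=\xi(t_{0},x)$ for brevity) I expect to reach the clean expression
\[
\partial_{x}g(s,x)=\frac{\partial_{x}\xi(t_{0},x)+\bigl[w^{\prime}(\xi)+s\varphi^{\prime}(\xi)\bigr]\,\partial_{x}\eta(t_{0},x)}{1+\bigl[w^{\prime}(\xi)+s\varphi^{\prime}(\xi)\bigr]^{2}}.
\]
Subtracting $1$ (using $1=\frac{1+[w'(\xi)+s\varphi'(\xi)]^{2}}{1+[w'(\xi)+s\varphi'(\xi)]^{2}}$ and regrouping as $[w'(\xi)+s\varphi'(\xi)]w'(\xi)+1$ versus $[w'(\xi)+s\varphi'(\xi)]^{2}$) I would arrive at
\[
\partial_{x}g-1=\frac{(\partial_{x}\xi-1)+\bigl[w^{\prime}(\xi)+s\varphi^{\prime}(\xi)\bigr]\bigl(\partial_{x}\eta-w^{\prime}(\xi)-s\varphi^{\prime}(\xi)\bigr)}{1+\bigl[w^{\prime}(\xi)+s\varphi^{\prime}(\xi)\bigr]^{2}}.
\]

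Now Proposition~\ref{proposition partial x} controls $\partial_{x}\xi-1$ and $\partial_{x}\eta-w^{\prime}(x)$ by $C\|\varphi\|_{C^{2}([a,b])}$, and the bound $|\xi(t_{0},x)-x|\leq C\|\varphi\|_{C^{1}([a,b])}$ from (\ref{bound xi-x and eta in terms of f}) combined with $w\in C^{2}$ yields $|w^{\prime}(\xi)-w^{\prime}(x)|\leq C\|\varphi\|_{C^{2}([a,b])}$ (using $\|\varphi\|_{C^{2}}<1$). The factor $[w'(\xi)+s\varphi'(\xi)]$ in the numerator is bounded in terms of $\|w\|_{C^{1}}$ and the denominator is bounded below by $1$. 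Putting everything together gives (\ref{g x estimate}). For points $x$ with $\varphi(x)=0$, (\ref{g at s=0}) yields $g(s,x)=x$ on the whole connected component where $\varphi$ vanishes, so $\partial_{x}g=1$ and the estimate holds trivially; Theorem~\ref{theorem regularity g} ensures this is consistent with the $\varphi\neq 0$ formula in the limit.

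For (\ref{h x estimate}), from (\ref{h}) I would differentiate to get $\partial_{x}h=[w^{\prime}(g)+s\varphi^{\prime}(g)]\,\partial_{x}g$, and then decompose
\[
\partial_{x}h-w^{\prime}(x)=w^{\prime}(g)(\partial_{x}g-1)+\bigl(w^{\prime}(g)-w^{\prime}(x)\bigr)+s\varphi^{\prime}(g)\,\partial_{x}g.
\]
The first term is controlled by (\ref{g x estimate}) and $\|w\|_{C^{1}}$; the second by $|g(s,x)-x|\le C\|\varphi\|_{C^{1}([a,b])}$ (which follows from (\ref{bound xi-x and eta in terms of f}) and the definition $g=\xi(t_{0},x)$) together with $w\in C^{2}$; the third by $\|\varphi\|_{C^{1}}$ times the (now bounded) $\partial_{x}g$. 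This yields (\ref{h x estimate}).

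The main obstacle is Step 2: verifying the algebraic cancellation of $\varphi(\xi(t_{0},x))$ in the product $\partial_{t}\xi(t_{0},x)\,\partial_{x}t_{0}$, and then recombining terms so that $\partial_{x}g-1$ is manifestly a sum of quantities each controlled by $\|\varphi\|_{C^{2}([a,b])}$. Once the clean formula above is in hand, the rest is a direct application of the previous two results.
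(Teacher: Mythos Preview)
Your approach is essentially the paper's: the formula you derive for $\partial_{x}g$ is exactly (\ref{partial x of g (new)}) (the paper simply cites it from the Appendix rather than re-deriving it), your decomposition of $\partial_{x}g-1$ matches the paper's Step~1 after regrouping, and your treatment of $\partial_{x}h$ via $\partial_{x}h=[w'(g)+s\varphi'(g)]\partial_{x}g$ is the paper's Step~3 verbatim.

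One correction is needed in your handling of the set $\{\varphi=0\}$. At an \emph{isolated} zero $x_{0}\in(a,b)$ of $\varphi$ (these can occur, since $\varphi$ is a polynomial), the connected component of $\{\varphi=0\}$ through $x_{0}$ is the single point $\{x_{0}\}$, so knowing $g(s,x_{0})=x_{0}$ does \emph{not} give $\partial_{x}g(s,x_{0})=1$. Indeed the paper's Step~2 computes, using the Appendix formula (\ref{partial x of g at x=a}), that
\[
\partial_{x}g(s,x_{0})=\frac{\sqrt{1+[w'(x_{0})]^{2}}}{\sqrt{1+[w'(x_{0})+s\varphi'(x_{0})]^{2}}},
\]
which differs from $1$ whenever $\varphi'(x_{0})\neq 0$, and then estimates this directly. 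Your ``consistency'' remark is nonetheless the right fix: since $\partial_{x}g$ is continuous by Theorem~\ref{theorem regularity g} and your estimate holds on the open dense set $\{\varphi\neq 0\}\cup(\mathbb{R}\setminus[a,b])$, it extends to all of $\mathbb{R}$ by continuity, so the explicit computation of Step~2 can be bypassed. Just drop the incorrect assertion that $\partial_{x}g=1$ at such points.
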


\begin{proof}
The proof is subdivided into three steps.\smallskip

\noindent\textbf{Step 1:} By (\ref{partial x of g (new)}) in the Appendix, for
$s\geq0$ and $\varphi(x)>0$ (the case $\varphi(x)<0$ is similar), we have%
\begin{align}
&  \partial_{x}g(s,x)-1=\frac{\partial_{x}\xi(t_{0}(s,x),x)-1}{1+\left[
w^{\prime}(g(s,x))+s\varphi^{\prime}(g(s,x))\right]  ^{2}}\nonumber\\
&  \quad+\frac{\left[  w^{\prime}(g(s,x))+s\varphi^{\prime}(g(s,x))\right]
\left[  w^{\prime}(x)-w^{\prime}(g(s,x))-s\varphi^{\prime}(g(s,x))\right]
}{1+\left[  w^{\prime}(g(s,x))+s\varphi^{\prime}(g(s,x))\right]  ^{2}%
}\label{300}\\
&  \quad+\frac{w^{\prime}(g(s,x))+s\varphi^{\prime}(g(s,x))}{1+\left[
w^{\prime}(g(s,x))+s\varphi^{\prime}(g(s,x))\right]  ^{2}}[\partial_{x}%
\eta(t_{0}(s,x),x)-w^{\prime}(x)].\nonumber
\end{align}
By the mean value theorem, (\ref{IVP}), (\ref{g}), the facts that
$t_{0}(s,x)\leq1$ and $w(\xi)\leq\eta\leq w(\xi)+s\varphi(\xi)$ (see Theorem~\ref{theorem existence t0}), we obtain%
\begin{align}
|w^{\prime}(x)-w^{\prime}(g(s,x))|  &  =|w^{\prime\prime}%
(c)(x-g(s,x))|\nonumber\\
&  \leq\left\Vert w^{\prime\prime}\right\Vert _{C([a,b])}\int_{0}^{t_{0}%
(s,x)}|w^{\prime}(\xi(r,x))\varphi(\xi(r,x))\label{302}\\
&  \quad+\left(  \eta(r,x)-w(\xi(r,x))\right)  \varphi^{\prime}(\xi
(r,x))|\,dr\nonumber\\
&  \leq\left\Vert w^{\prime\prime}\right\Vert _{C([a,b])}(\left\Vert
w^{\prime}\right\Vert _{C([a,b])}+\left\Vert \varphi^{\prime}\right\Vert
_{C([a,b])})\left\Vert \varphi\right\Vert _{C([a,b])}.\nonumber
\end{align}
Hence, from (\ref{300}) and Proposition~\ref{proposition partial x}, we deduce that%
\begin{align*}
|\partial_{x}g(s,x)-1|  &  \leq|\partial_{x}\xi(t_{0}(s,x),x)-1|+|w^{\prime
}(x)-w^{\prime}(g(s,x))|+|\varphi^{\prime}(g(s,x))|\\
&  \quad+|\partial_{x}\eta(t_{0}(s,x),x)-w^{\prime}(x)|\leq C\left\Vert
\varphi\right\Vert _{C^{2}([a,b])}.
\end{align*}

\noindent\textbf{Step 2:} If $s\geq0$ and $\varphi(x)=0$, then by
(\ref{partial x of g at x=a}) in the Appendix,
\begin{align*}
&  |\partial_{x}g(s,x)-1|=1-\frac{\sqrt{1+\left[  w^{\prime}(x)\right]  ^{2}}%
}{\sqrt{1+\left[  w^{\prime}(x)+s\varphi^{\prime}(x)\right]  ^{2}}}\\
&  \quad=\frac{\left[  w^{\prime}(x)+s\varphi^{\prime}(x)\right]  ^{2}-\left[
w^{\prime}(x)\right]  ^{2}}{\left(  \sqrt{1+\left[  w^{\prime}(x)\right]
^{2}}+\sqrt{1+\left[  w^{\prime}(x)+s\varphi^{\prime}(x)\right]  ^{2}}\right)
\sqrt{1+\left[  w^{\prime}(x)+s\varphi^{\prime}(x)\right]  ^{2}}}\\
&  \quad\leq2\left\Vert w^{\prime}\right\Vert _{C([a,b])}\left\Vert
\varphi^{\prime}\right\Vert _{C([a,b])}+\left\Vert \varphi^{\prime}\right\Vert
_{C([a,b])}^{2},
\end{align*}
so that (\ref{g x estimate}) holds even in this case.\smallskip

\noindent\textbf{Step 3:} To conclude the proof, note that by (\ref{h}),%
\[
\partial_{x}h(s,x)=[w^{\prime}(g(s,x))+s\varphi^{\prime}(g(s,x))]\partial
_{x}g(s,x),
\]
and so by (\ref{g x estimate}) and (\ref{302}), we deduce that
\begin{align*}
|\partial_{x}h(s,x)-w^{\prime}(x)|  &  \leq|w^{\prime}(g(s,x))+s\varphi
^{\prime}(g(s,x))||\partial_{x}g(s,x)-1|\\
&  \quad+|w^{\prime}(g(s,x))-w^{\prime}(x)|+|\varphi^{\prime}(g(s,x))|\leq
C\left\Vert \varphi\right\Vert _{C^{2}([a,b])},
\end{align*}
which proves (\ref{h x estimate}).
\end{proof}

We are now ready to construct the family of diffeomorphisms.\medskip

\begin{proof}
[Proof of Theorem~\ref{theorem diffeomorphism}]For every $s\in\lbrack0,1]$, we
define $\Psi_{s}:\mathbb{R}^{2}\rightarrow\mathbb{R}^{2}$ by%
\begin{equation}
\Psi_{s}(x,y):=(g(s,x),h(s,x)+y-w(x)), \label{psi s}%
\end{equation}
where $g$ and $h$ are the functions given in (\ref{g}) and (\ref{h definition}%
), respectively. By Theorem~\ref{theorem regularity g} and (\ref{h}),
$\Psi_{s}$ is of class $C^{2}\left(  [0,1]\times\mathbb{R}^{2}\right)  $.
Moreover, by (\ref{g at s=0}) and (\ref{h at s=0}),%
\[
\Psi_{0}(x,y)=(x,y),
\]
which implies, in particular, that $\Psi_{0}(\Gamma)=\Gamma$, while by Theorem~\ref{theorem graph} and the fact that $g(s,x)=x$ and $h(s,x)=w(x)$ for all
$x\in\mathbb{R}\setminus\lbrack a,b]$ by (\ref{g at s=0}) and (\ref{h at s=0}%
), it follows that
\[
\Psi_{s}(\Gamma)=\left\{  \Psi_{s}(x,w(x)):\,x\in(-1,1)\right\}  =\left\{
(x,w(x)+s\varphi(x)):\,x\in(-1,1)\right\}
\]
for every $s\in\lbrack0,1]$.

Since $\min_{[-1,1]}w>0$ by (\ref{w positive}), let
\begin{equation}
0<2L<\min_{[-1,1]}w,\quad M>\max_{[-1,1]}w. \label{L and M}%
\end{equation}
We now modify $\Psi_{s}$ to obtain a diffeomorphism in $\mathbb{R}^{2}$ which
coincides with the identity outside the open set $U:=\left(  a,b\right)
\times\left(  L,M+2\right)$.
Given
\begin{equation}
0<\delta_{0}<\min\left\{  1,L\right\}  , \label{delta 0}%
\end{equation}
construct a function $\lambda\in C_{c}^{\infty}\left(  \mathbb{R}\right)  $
such that $0\leq\lambda\leq1$, $\lambda\left(  y\right)  =1$ if $L+\delta
_{0}\leq y\leq M+2-\delta_{0}$, $\lambda\left(  y\right)  =0$ if $y\geq M+2$
or $y\leq L$, and $\left\vert \lambda^{\prime}\left(  y\right)  \right\vert
\leq2/\delta_{0}$ for all $y\in\mathbb{R}$. For every $s\in\left[  0,1\right]
$ and $(x,y)\in\mathbb{R}^{2}$, define
\begin{equation}
\Phi_{s}(x,y):=\lambda(y)\Psi_{s}(x,y)+(1-\lambda(y))(x,y). \label{phi s}%
\end{equation}
Then
\begin{equation}
D\Phi_{s}(x,y)=I_{2\times2}+\lambda(y)(D\Psi_{s}(x,y)-I_{2\times2})+(\Psi
_{s}(x,y)-(x,y))\otimes(0,\lambda^{\prime}(y)), \label{D phi s}%
\end{equation}
and so%
\[
\left\vert D\Phi_{s}(x,y)-I_{2\times2}\right\vert \leq\left\vert D\Psi
_{s}(x,y)-I_{2\times2}\right\vert +\frac{2}{\delta_{0}}\left\vert \Psi
_{s}(x,y)-\left(  x,y\right)  \right\vert .
\]
By Theorem~\ref{theorem norm of g and h} and (\ref{psi s}), we have
\[
\left\vert D\Psi_{s}(x,y)-I_{2\times2}\right\vert \leq C_{0}\left\Vert
\varphi\right\Vert _{C^{2}([a,b])},
\]
while by (\ref{g at s=0}), (\ref{h at s=0}), and Theorem~\ref{theorem norm of g and h}, for $x\in\left(  a,b\right)  $,%
\begin{align*}
\left\vert \Psi_{s}(x,y)-\left(  x,y\right)  \right\vert  &  \leq2\left\vert
g(s,x)-x\right\vert +2\left\vert h(s,x)-w(x)\right\vert \\
&  =2\left\vert \int_{a}^{x}\left(  \partial_{x}g\left(  s,r\right)
-1\right)  \,dr\right\vert +2\left\vert \int_{a}^{x}\left(  \partial
_{x}h\left(  s,r\right)  -w^{\prime}(r)\right)  \,dr\right\vert \\
&  \leq C_{0}\left\Vert \varphi\right\Vert _{C^{2}([a,b])},
\end{align*}
while for $x\notin\left(  a,b\right)  $, $\Psi_{s}(x,y)=\left(  x,y\right)  $
by (\ref{g at s=0}) and (\ref{h at s=0}), since $\varphi=0$ outside $\left(
a,b\right)  $. Hence, for all $(x,y)\in\mathbb{R}^{2}$, we deduce that%
\begin{equation}
\left\vert D\Phi_{s}(x,y)-I_{2\times2}\right\vert \leq C_{0}\left(  1+\frac
{1}{\delta_{0}}\right)  \left\Vert \varphi\right\Vert _{C^{2}([a,b])}<1,
\label{D phi s -I}%
\end{equation}
provided $\left\Vert \varphi\right\Vert _{C^{2}([a,b])}<\delta_{0}^{2}$ and
\[
0<\delta_{0}<\frac{1}{2C_{0}}.
\]
This implies that $\Phi_{s}$ is invertible in $\mathbb{R}^{2}$. It follows by
the inverse function theorem that $\Phi_{s}\left(  \mathbb{R}^{2}\right)  $ is
open and $\left(  \Phi_{s}\right)  ^{-1}$ is of class $C^{2}$.

Moreover, we have already seen that $\Psi_{s}(x,y)=\left(  x,y\right)  $ for
$x\notin\left(  a,b\right)  $, and so, again by (\ref{phi s}), $\Phi
_{s}(x,y)=\left(  x,y\right)  $ for $x\notin\left(  a,b\right)  $. This shows
that $\Phi_{s}$ is the identity outside $U$. In particular, $\Phi_{s}\left(
\partial U\right)  =\partial U$ and $\Phi_{s}\left(  U\right)  \subset U$.

Finally, we observe that
\[
\Phi_{s}=\Psi_{s}\quad\text{on }\Gamma_{s}%
\]
since, by (\ref{L and M}) and (\ref{delta 0}),
\[
L+\delta_{0}\leq2L-\left\Vert \varphi\right\Vert _{C^{0}([a,b])}\leq
w(x)+s\varphi(x)\leq M+\left\Vert \varphi\right\Vert _{C^{0}([a,b])}\leq
M+2-\delta_{0},
\]
provided
\[
\left\Vert \varphi\right\Vert _{C^{0}([a,b])}\leq\min\{1,L-\delta_{0}\}.
\]

To conclude the proof, it remains to show (\ref{tangential part equal zero}).
By (\ref{phi s}), (\ref{D phi s}), and the fact that $\lambda\left(  y\right)
=1$ if $L+\delta_{0}\leq y\leq M+2-\delta_{0}$, we have that
\begin{align*}
&  \dot{\Phi}_{s}(x,w(x))\cdot(D\Phi_{s}(x,w(x))\tau(x,w(x)))\\
&  =\dot{\Psi}_{s}(x,w(x))\cdot(D\Psi_{s}(x,w(x))\tau(x,w(x)))\\
&  =\left(  \partial_{s}g(s,x),\partial_{s}h(s,x)\right)  \cdot\left(
\begin{array}
[c]{cc}%
\partial_{x}g(s,x) & 0\\
\partial_{x}h(s,x)-w^{\prime}(x) & 1
\end{array}
\right)  \frac{\left(  1,w^{\prime}(x)\right)  }{\sqrt{1+\left(  w^{\prime
}(x)\right)  ^{2}}},
\end{align*}
provided $\left\Vert \varphi\right\Vert _{C^{2,\alpha}(a,b)}$ is sufficiently
small. Hence, by (\ref{X and Z}), (\ref{tangential part equal zero}) is
equivalent to
\begin{equation}
\partial_{x}g(s,x)\partial_{s}g(s,x)+\partial_{x}h(s,x)\partial_{s}h(s,x)=0
\label{i equiv}%
\end{equation}
for every $(s,x)\in[0,1]\times[ a,b]$.

Differentiating (\ref{h}) with respect to $x$ and $s$, respectively, yields
\begin{equation}
\partial_{x}h=(w^{\prime}(g)+s\varphi^{\prime}(g))\partial_{x}g,\quad
\partial_{s}h=(w^{\prime}(g)+s\varphi^{\prime}(g))\partial_{s}g+\varphi(g),
\label{partial h}%
\end{equation}
so that
\[
\partial_{x}g\partial_{s}g+\partial_{x}h\partial_{s}h=[(1+(w^{\prime
}(g)+s\varphi^{\prime}(g))^{2})\partial_{s}g+(w^{\prime}(g)+s\varphi^{\prime
}(g))\varphi(g)]\partial_{x}g,
\]
which is equal to $0$ by (\ref{partial s of g (new)}),
(\ref{partial g outside [a,b]}), and (\ref{partial s of g at a}) in the Appendix.
\end{proof}

\section{Proof of Theorem~\ref{theorem main}}

\label{section proof main}

To prove Theorem~\ref{theorem main} we first establish a minimality property with
respect to special variations of the domain $\Omega_{+}$. To be precise, we
will show the following result.

\begin{theorem}
\label{theorem second variation positive} Under the assumptions of
Theorem~\ref{theorem main}, there exists $\delta_{1}>0$ such that for all
polynomials $\varphi:[a,b]\rightarrow\mathbb{R}$ satisfying
(\ref{f and itz derivatives zero at endpoints}), extended to be zero outside
$[a,b]$ and with $\left\Vert \varphi\right\Vert _{C^{2,\alpha}(a,b)}\leq
\delta_{1}$,%
\[
\mathcal{F}(u)\leq\mathcal{F}(v)
\]
for every $v\in\mathcal{A}$ such that $\{v>0\}=\Phi_{1}(\{u>0\})$, where
$\{\Phi_{s}\}_{s\in\lbrack0,1]}$ is the admissible flow given in Theorem~\ref{theorem diffeomorphism}.
\end{theorem}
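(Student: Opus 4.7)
The plan is to combine the Dirichlet minimality of the auxiliary functions $u_s$ defined in \eqref{equation u t} with a second-order Taylor expansion of $s\mapsto\mathcal{F}(u_s)$ around $s=0$, exploiting both the coercivity hypothesis \eqref{coercivity} and the identity $X_s\cdot\tau_s\equiv0$ on $\Gamma_s$ guaranteed by Theorem~\ref{theorem diffeomorphism}. First, given $v\in\mathcal{A}$ with $\{v>0\}=\Phi_1(\{u>0\})$, the sets $\{v>0\}$ and $\{u_1>0\}$ coincide, and $v$ and $u_1$ share the same traces on $\partial\Phi_1(\Omega_{+})$. By the Dirichlet minimality of $u_1$ on $\Phi_1(\Omega_{+})$, $\int_{\Omega}|\nabla v|^{2}\geq\int_{\Omega}|\nabla u_1|^{2}$, while $\chi_{\{v>0\}}=\chi_{\{u_1>0\}}$, so $\mathcal{F}(v)\geq\mathcal{F}(u_1)$. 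It thus suffices to prove $\mathcal{F}(u_1)\geq\mathcal{F}(u_0)=\mathcal{F}(u)$.

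To handle $\mathcal{F}(u_1)-\mathcal{F}(u)$, I would apply Taylor's formula with integral remainder,
\[
\mathcal{F}(u_1)-\mathcal{F}(u)=\frac{d}{ds}\mathcal{F}(u_{s})\Big|_{s=0}+\int_{0}^{1}(1-s)\,\frac{d^{2}}{ds^{2}}\mathcal{F}(u_{s})\,ds,
\]
whose first term vanishes by \eqref{1var} combined with the critical condition \eqref{u stat}. The task then reduces to showing that $\frac{d^{2}}{ds^{2}}\mathcal{F}(u_{s})\geq0$ for every $s\in[0,1]$, for $\|\varphi\|_{C^{2,\alpha}}$ small. Since $X_{s}\cdot\tau_{s}\equiv0$ on $\Gamma_{s}$, in \eqref{2var} the tangential cross-term drops and $|X_{s}|^{2}=(X_{s}\cdot\nu_{s})^{2}=\psi_{s}^{2}$, where $\psi_{s}:=X_{s}\cdot\nu_{s}$. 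Hence
\[
\frac{d^{2}}{ds^{2}}\mathcal{F}(u_{s})=\mathcal{Q}_{s}(\psi_{s})+\int_{\Gamma_{s}}(Q^{2}-|\nabla u_{s}|^{2})\bigl(Z_{s}\cdot\nu_{s}+\kappa_{s}\psi_{s}^{2}\bigr)\,d\mathcal{H}^{1},
\]
where $\mathcal{Q}_{s}(\psi_{s}):=\int_{\Phi_{s}(\Omega_{+})}2|\nabla\dot u_{s}|^{2}\,d\boldsymbol{x}+\int_{\Gamma_{s}}\bigl(\partial_{\nu_{s}}Q^{2}+2\kappa_{s}(\partial_{\nu_{s}}u_{s})^{2}\bigr)\psi_{s}^{2}\,d\mathcal{H}^{1}$ is the direct analogue on $\Gamma_{s}$ of the quadratic form appearing on the left of \eqref{coercivity}.

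The central step is then to transfer the $H^{1/2}$-coercivity from $\Gamma$ to the nearby boundaries $\Gamma_{s}$. I would pull $\Gamma_{s}$ back to $\Gamma$ through $\Phi_{s}$, apply sharp Schauder estimates (in the spirit of those implicit in Proposition~\ref{proposition regularity u hat s}) to obtain uniform $C^{2,\alpha}$-closeness of $u_{s}$ to $u$ and of the harmonic-extension operator $\psi\mapsto u_{\psi}$ from \eqref{100} to its $s=0$ counterpart, and combine this with the continuous dependence of $\kappa_{s}$, $\nu_{s}$, and $Q$ on $s$. This should yield, for $\|\varphi\|_{C^{2,\alpha}}$ small enough,
\[
\mathcal{Q}_{s}(\psi_{s})\geq\tfrac{C_{0}}{2}\|\psi_{s}\|_{H^{1/2}(\Gamma_{s})}^{2}\quad\text{uniformly in }s\in[0,1].
\]
For the remainder, the critical condition \eqref{u stat} together with the same Schauder estimates gives $\|Q^{2}-|\nabla u_{s}|^{2}\|_{L^{\infty}(\Gamma_{s})}=O(\|\varphi\|_{C^{2,\alpha}})$, and the structure of the flow from Theorem~\ref{theorem diffeomorphism} provides control of $\|Z_{s}\cdot\nu_{s}\|_{L^{2}(\Gamma_{s})}$ in terms of $\|\psi_{s}\|_{H^{1/2}(\Gamma_{s})}$; this suffices to absorb the remainder into $\tfrac{C_{0}}{2}\|\psi_{s}\|_{H^{1/2}(\Gamma_{s})}^{2}$.

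The main obstacle is precisely this coercivity-transfer step, together with the compatible estimate for the remainder on the moving boundary $\Gamma_{s}$: it is here that the condition $X_{s}\cdot\tau_{s}\equiv0$ from Theorem~\ref{theorem diffeomorphism} is indispensable, since otherwise the discarded contribution $-2(X_{s}\cdot\tau_{s})\partial_{\tau_{s}}\psi_{s}$ in the prefactor multiplying $(Q^{2}-|\nabla u_{s}|^{2})$ would introduce a tangential derivative of $\psi_{s}$ not dominated by $\|\psi_{s}\|_{H^{1/2}(\Gamma_{s})}$. Once the uniform coercivity is in place, $\frac{d^{2}}{ds^{2}}\mathcal{F}(u_{s})\geq0$ for all $s\in[0,1]$ as soon as $\|\varphi\|_{C^{2,\alpha}}\leq\delta_{1}$, and Taylor's formula together with Step one concludes the proof.
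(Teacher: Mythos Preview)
Your plan is essentially the paper's own proof: reduce to $\mathcal{F}(u_1)\geq\mathcal{F}(u)$ by Dirichlet minimality, apply Taylor with integral remainder, use \eqref{u stat} to kill the first-order term, exploit $X_s\cdot\tau_s\equiv0$ to simplify \eqref{2var}, and then transfer the coercivity \eqref{coercivity} from $\Gamma$ to $\Gamma_s$ via Schauder estimates (the paper's Propositions~\ref{proposition estimates us}--\ref{proposition estimate 2}). All of this is correct.

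There is, however, one imprecise step that hides real work. Your claim that ``the structure of the flow provides control of $\|Z_s\cdot\nu_s\|_{L^2(\Gamma_s)}$ in terms of $\|\psi_s\|_{H^{1/2}(\Gamma_s)}$'' is not quite the right mechanism, and a naive implementation fails. From the explicit formulas for the flow one finds (see \eqref{phi ddot}) that $Z_s\cdot\nu_s$ contains, besides a harmless term of size $O(\psi_s^2)$, a term of the form $2\varphi'(g)\,\partial_s g$ with $\partial_s g\sim\varphi(g)$; pointwise this is $\sim\varphi'(g)\varphi(g)=\tfrac12\partial_r(\varphi^2)(g)$. If you simply bound $\|Q^2-|\nabla u_s|^2\|_{L^\infty}=O(\|\varphi\|_{C^{2,\alpha}})$ and then estimate $\|\varphi'\varphi\|$ in $L^1$ or $L^2$, the resulting bound is linear in $\|\psi_s\|_{L^2}$ with a prefactor $O(\|\varphi\|_{C^{2,\alpha}}^2)$, which is \emph{not} absorbed by $\tfrac{C_0}{2}\|\psi_s\|_{H^{1/2}}^2$ uniformly (think of $\varphi$ with small $L^2$ norm but fixed $C^{2,\alpha}$ norm). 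What the paper does instead (Proposition~\ref{proposition tangential zero}) is integrate by parts on $\Gamma_s$ to move the tangential derivative off $\varphi^2$ and onto the factor $(Q^2-|\nabla u_s|^2)\cdot(\text{smooth})$; this is exactly why the \emph{second} estimate of Proposition~\ref{proposition estimates us} --- a $C^1$ bound $\|Q^2-|\nabla u_s|^2\|_{C^1(\Gamma_s)}\leq C\|\varphi\|_{C^{2,\alpha}}$, which in turn needs $Q\in C^{1,1}$ --- is proved. After that integration by parts one gets $\big|\int_{\Gamma_s}(Q^2-|\nabla u_s|^2)Z_s\cdot\nu_s\big|\leq C\|\varphi\|_{C^{2,\alpha}}\|\psi_s\|_{L^2(\Gamma_s)}^2$, which absorbs as you wanted. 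So your outline is right, but for the remainder you should invoke (i) the explicit structure of $Z_s\cdot\nu_s$ coming from the special flow, (ii) an integration by parts along $\Gamma_s$, and (iii) the $C^1$ smallness of $Q^2-|\nabla u_s|^2$ on $\Gamma_s$ --- not merely an $L^2$ bound on $Z_s\cdot\nu_s$.
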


We begin with some preliminary estimates.

\begin{proposition}
\label{proposition estimates us}
Let $Q\in C^{1,1}(\Omega)$, let $\varphi$ be as in Theorem~\ref{theorem second variation positive}, and let $u_{s}$ be the solution to
problem (\ref{equation u t}), where $\Phi_{s}$ is given by (\ref{phi s}).
Then
\[
\big|Q^{2}(x,w(x)+s\varphi(x))-|\nabla u_{s}(x,w(x)+s\varphi(x))|^{2}\big|\leq
C\left\Vert \varphi\right\Vert _{C^{2,\alpha}(a,b)}%
\]
and%
\[
\big|\partial_{x}\big(Q^{2}(x,w(x)+s\varphi(x))-|\nabla u_{s}(x,w(x)+s\varphi
(x))|^{2}\big)\big|\leq C\left\Vert \varphi\right\Vert _{C^{2,\alpha}(a,b)}%
\]
for all $x\in\lbrack-1,1]$, where $C$ depends only on $\left\Vert w\right\Vert
_{C^{2,\alpha}(-1,1)}$ and $\left\Vert u\right\Vert _{C^{2,\alpha}(\Omega
_{+})}$.
\end{proposition}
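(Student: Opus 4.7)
My plan is to reduce to the fixed domain $\Omega_+$ via the pull-back $\hat u_s := u_s \circ \Phi_s$, which (as shown in the proof of Proposition~\ref{proposition regularity u hat s}) satisfies $\mathrm{div}(A_s \nabla \hat u_s) = 0$ in $\Omega_+$ with $\hat u_s = u$ on $\Gamma \cup (\{y=0\}\cap\partial\Omega_+)$ plus periodicity. Applying Schauder theory to $\hat u_s - u$ will provide a $C^{2,\alpha}$ bound of order $\|\varphi\|_{C^{2,\alpha}}$, which can then be translated into an estimate on $\Gamma_s = \Phi_s(\Gamma)$ using the Euler--Lagrange condition $(\partial_\nu u)^2 = Q^2$ on $\Gamma$ to cancel the $s=0$ contribution.

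First I would upgrade the bounds on the flow from Section~\ref{section diff} to the H\"older scale: by continuing the implicit-function arguments of Theorems~\ref{theorem regularity g} and~\ref{theorem norm of g and h} one step further, and exploiting that $t_0$, $\xi$, $\eta$ depend on $\varphi$ through compositions of $C^{2,\alpha}$ maps, one obtains $\|\Phi_s - \mathrm{Id}\|_{C^{2,\alpha}(\mathbb{R}^2)} \le C\|\varphi\|_{C^{2,\alpha}(a,b)}$, and hence $\|A_s - I\|_{C^{1,\alpha}(\overline{\Omega_+})} \le C\|\varphi\|_{C^{2,\alpha}}$. The difference $w_s := \hat u_s - u$ then solves
\[
\mathrm{div}(A_s \nabla w_s) = -\mathrm{div}\big((A_s - I)\nabla u\big) \quad\text{in }\Omega_+,
\]
with zero trace on $\Gamma \cup (\{y=0\}\cap\partial\Omega_+)$ and periodicity. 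Since $u \in C^{2,\alpha}(\Omega_+\cup\Gamma)$ the right-hand side lies in $C^{0,\alpha}$ with norm $\le C\|\varphi\|_{C^{2,\alpha}}$; because $\Gamma$ is $C^3$ and $\mathrm{supp}(\Phi_s - \mathrm{Id}) \Subset \Omega$ keeps us strictly away from the corners of $\partial\Omega_+$, classical boundary Schauder estimates give
\[
\|\hat u_s - u\|_{C^{2,\alpha}(\Omega_+ \cup \Gamma)} \le C\|\varphi\|_{C^{2,\alpha}}.
\]

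To translate this onto $\Gamma_s$, fix $x \in [a,b]$, set $P_s(x) := (x, w(x)+s\varphi(x))$, and let $Q_s(x) := \Phi_s^{-1}(P_s(x)) \in \Gamma$. Since $\hat u_s \equiv 0$ on $\Gamma$, $\nabla \hat u_s = (\partial_\nu \hat u_s)\,\nu$ there, so the chain rule yields
\[
|\nabla u_s(P_s(x))|^2 = (\partial_\nu \hat u_s(Q_s(x)))^2 \,|(D\Phi_s(Q_s(x)))^{-T}\nu(Q_s(x))|^2.
\]
At $s=0$ this equals $(\partial_\nu u(P_0(x)))^2 = Q^2(P_0(x))$ by \eqref{u stat}. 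Subtracting and telescoping, $Q^2(P_s(x)) - |\nabla u_s(P_s(x))|^2$ splits into three pieces, each of order $\|\varphi\|_{C^{2,\alpha}}$: a shift of $Q^2$ (controlled by Lipschitz regularity of $Q^2$), a comparison of $\partial_\nu \hat u_s(Q_s)$ with $\partial_\nu u(P_0)$ (controlled by the Schauder step together with $|Q_s - P_0| \le C\|\varphi\|_{C^{2,\alpha}}$), and the normal-distortion term $1 - |(D\Phi_s)^{-T}\nu|^2$ (controlled by the flow step). The $\partial_x$ bound follows by differentiating the same identity once in $x$: the new terms involve at worst $D^2 \hat u_s$ and $D^2 \Phi_s$, both controlled in $C^0$ by the previous steps, and $\nabla Q^2$, controlled by the $C^{1,1}$ regularity of $Q$; the $s=0$ value of $\partial_x(Q^2 - |\nabla u|^2)$ vanishes because this function is identically zero on $\Gamma$.

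The main obstacle is the sharp $C^{2,\alpha}$ control of $\Phi_s$ in the first step. The construction in Section~\ref{section diff} passes through compositions of maps that are only piecewise smooth (the time function $t_0$ is in general discontinuous on $\{\varphi = 0\}$), so extracting a H\"older modulus for $D^2 \Phi_s$ requires careful application of H\"older composition lemmas to $g$, $h$, and the cut-off $\lambda$ in \eqref{phi s}. Once this sharp bound is in hand, the Schauder estimate and the chain-rule manipulations on $\Gamma_s$ are essentially routine, because $\Gamma$ is $C^3$ and $\Phi_s$ equals the identity near $\partial\Omega$.
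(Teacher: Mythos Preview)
Your plan has a genuine gap in the first step. The paper establishes only $C^{2}$ regularity of the flow $\Phi_{s}$ (Theorem~\ref{theorem regularity g}), and the authors explicitly remark immediately after Proposition~\ref{proposition estimates us} that the proof ``could be significantly simplified if we could show that the diffeomorphism $\Phi_{s}$ is of class $C^{2,\alpha}$ rather than just $C^{2}$.'' This is not an oversight you can repair by ``continuing the implicit-function arguments one step further'': the construction of $g$ passes through the time function $t_{0}$, which is \emph{discontinuous} on $\{\varphi=0\}$, and the second derivatives of $g$ involve $\partial_{x}t_{0}$, which carries a factor $1/\varphi(g)$ (see \eqref{partial x of t0}). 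The cancellations in the Appendix that yield continuity of $\partial_{x}^{2}g$ are delicate limit computations via l'H\^{o}pital; extracting a uniform H\"older modulus across $\{\varphi=0\}$ from those formulas, with a bound of order $\|\varphi\|_{C^{2,\alpha}}$, is not at all routine and is not supplied by the paper (and would likely require more regularity on $w$ than $C^{3}$). Without $\|A_{s}-I\|_{C^{1,\alpha}}\le C\|\varphi\|_{C^{2,\alpha}}$ your Schauder step on $\hat u_{s}-u$ does not close.

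The paper sidesteps this entirely: for Proposition~\ref{proposition estimates us} one does not need the sophisticated flow $\Phi_{s}$ at all, only \emph{some} change of variables sending $\Gamma_{s}$ to $\Gamma$. The authors use the elementary vertical shift $v_{s}(x,y):=u_{s}(x,y+s\varphi(x))$, which maps $\Gamma$ to $\Gamma_{s}$ and whose $C^{2,\alpha}$ closeness to the identity is \emph{trivially} controlled by $\|\varphi\|_{C^{2,\alpha}}$. The function $v_{s}$ satisfies a uniformly elliptic equation \eqref{s0a} with coefficients depending only on $\varphi',\varphi''$; Schauder theory then gives $\|v_{s}-u\|_{C^{2,\alpha}}\le C\|\varphi\|_{C^{2,\alpha}}$ directly (Step~2 of their proof), and the chain-rule manipulations in Step~3 are exactly the ones you describe. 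If you replace $\hat u_{s}=u_{s}\circ\Phi_{s}$ by $v_{s}$ in your argument, your Steps~2 and~3 go through essentially verbatim and you recover the paper's proof.
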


\begin{proof}
The proof is subdivided into three steps.\smallskip

\noindent\textbf{Step 1:} Recall that the function $\hat{u}_{s}:=u_{s}%
\circ\Phi_{s}$ satisfies the boundary value problem (\ref{s3}) with
coefficients $A_{s}$ given by (\ref{As}). Using the matrix expansion
\[
(I_{2\times2}+B)^{-1}=I_{2\times2}-B+o(|B|),
\]
it follows from (\ref{D phi s -I}) that the matrix $B_{s}:=A_{s}-I_{2\times2}$
satisfies%
\begin{equation}
|B_{s}|\leq C\left\Vert \varphi\right\Vert _{C^{2}([a,b])}\leq\frac{1}{2},
\label{s4}%
\end{equation}
provided $\left\Vert \varphi\right\Vert _{C^{2}([a,b])}$ is sufficiently
small. In turn, the matrix $A_{s}$ is positive definite uniformly with respect
to $s$. Using (\ref{s4}), by (\ref{s3}) and Poincar\'{e}
inequality in the Lipschitz domain $\Omega_{+}$ we obtain
\begin{equation}
\left\Vert \hat{u}_{s}\right\Vert _{H^{1}(\Omega_{+})}\leq C\left\Vert
u\right\Vert _{H^{1}(\Omega_{+})}, \label{s5}%
\end{equation}
where $C>0$ depends on $\Omega_{+}$ but not on $s$.
On the other hand, by (\ref{equation u}) and (\ref{s3}) we have
\[
\left\{
\begin{array}
[c]{ll}%
\operatorname{div}(A_{s}\nabla(\hat{u}_{s}-u))=-\operatorname{div}(B_{s}\nabla
u) & \text{in }\Omega_{+},\\
\hat{u}_{s}-u=0 & \text{on }\Gamma\cup(\{y=0\}\cap\partial\Omega_{+}),
\end{array}
\right.
\]
with $(\hat{u}_{s}-u)(-1,y)=(\hat{u}_{s}-u)(1,y)$ for all $(\pm1,y)\in
\overline{\Omega_{+}}$. Hence, with similar estimates, it follows from
(\ref{s4}) that
\begin{equation}
\left\Vert \hat{u}_{s}-u\right\Vert _{H^{1}(\Omega_{+})}\leq C\left\Vert
B_{s}\right\Vert _{C^{0}(\Omega_{+})}\left\Vert u\right\Vert _{H^{1}%
(\Omega_{+})}\leq C\left\Vert \varphi\right\Vert _{C^{2}([a,b])}\left\Vert
u\right\Vert _{H^{1}(\Omega_{+})}. \label{s6}%
\end{equation}
Using the fact that $\hat{u}_{s}(x,y)=u_{s}(x,y)$ for all $y<L$, where $L$ is
given in (\ref{L and M}), by (\ref{s5}) and (\ref{s6}) we have
\begin{equation}
\left\Vert u_{s}\right\Vert _{H^{1}((-1,1)\times(0,L))}\leq C,\quad\left\Vert
u_{s}-u\right\Vert _{H^{1}((-1,1)\times(0,L))}\leq C\left\Vert \varphi
\right\Vert _{C^{2}([a,b])}, \label{s7}%
\end{equation}
where $C$ depends on $\left\Vert w\right\Vert _{C^{1}(-1,1)}$ and $\left\Vert
u\right\Vert _{H^{1}(\Omega_{+})}$. By \cite[Theorem~9.13]{GilbargTrud} and
(\ref{s7}),%
\[
\left\Vert u_{s}\right\Vert _{H^{2}((-1,1)\times(\varepsilon_{0}%
,6\varepsilon_{0}))}\leq C,\quad\left\Vert u_{s}-u\right\Vert _{H^{2}%
((-1,1)\times(\varepsilon_{0},6\varepsilon_{0}))}\leq C\left\Vert
\varphi\right\Vert _{C^{2}([a,b])}%
\]
for $0<\varepsilon_{0}<L/6$. Since $u_{s}$ and $u$ are periodic in the $x$
variable, they are still harmonic in $\mathbb{R}\times(\varepsilon
_{0},6\varepsilon_{0})$ and satisfy
\[
\left\Vert u_{s}\right\Vert _{H^{2}((a^{\prime},b^{\prime})\times
(\varepsilon_{0},6\varepsilon_{0}))}\leq C,\quad\left\Vert u_{s}-u\right\Vert
_{H^{2}((a^{\prime},b^{\prime})\times(\varepsilon_{0},6\varepsilon_{0}))}\leq
C\left\Vert \varphi\right\Vert _{C^{2}([a,b])}%
\]
for some $a^{\prime}<-1<1<b^{\prime}$. Using \cite[Theorem~2.10]{GilbargTrud}
in the set $(a^{\prime},b^{\prime})\times(\varepsilon_{0},6\varepsilon_{0})$
we obtain that%
\begin{equation}
\left\Vert u_{s}\right\Vert _{C^{3}((-1,1)\times(2\varepsilon_{0}%
,5\varepsilon_{0}))}\leq C,\quad\left\Vert u_{s}-u\right\Vert _{C^{3}%
((-1,1)\times(2\varepsilon_{0},5\varepsilon_{0}))}\leq C\left\Vert
\varphi\right\Vert _{C^{2}([a,b])}, \label{s8}%
\end{equation}
where we invoked the continuous immersion of $H^{2}((a^{\prime},b^{\prime}%
)\times(\varepsilon_{0},6\varepsilon_{0}))$ into $C^{0}((a^{\prime},b^{\prime
})\times(\varepsilon_{0},6\varepsilon_{0}))$.\smallskip

\noindent\textbf{Step 2:} Let $\left\Vert \varphi\right\Vert _{C^{2}%
([a,b])}<\varepsilon_{0}$. By Theorem~\ref{theorem diffeomorphism} the
function%
\begin{equation}
v_{s}(x,y):=u_{s}(x,y+s\varphi(x)) \label{s0}%
\end{equation}
is well-defined in the set%
\begin{equation}
\Omega_{0}:=\Omega_{+}\cap((-1,1)\times(3\varepsilon_{0},\infty)),
\label{Omega 0}%
\end{equation}
and by (\ref{equation u t}) it satisfies the elliptic equation
\begin{equation}
\partial_{x}^{2}v_{s}+(1+(s\varphi^{\prime})^{2})\partial_{y}^{2}%
v_{s}-2s\varphi^{\prime}\partial_{xy}^{2}v_{s}-s\varphi^{\prime\prime}%
\partial_{y}v_{s}=0\quad\text{in }\Omega_{0}. \label{s0a}%
\end{equation}
Moreover, since $\varphi=0$ outside $[a,b]\subset(-1,1)$, we have
$v_{s}(-1,y)=u_{s}(-1,y)=u_{s}(1,y)=v_{s}(1,y)$. Hence, $v_{s}$ satisfies the
previous equation in $((a^{\prime},b^{\prime})\times(3\varepsilon_{0}%
,\infty))\cap\{u>0\}$, where $u$ has been extended periodically and
$a^{\prime}<-1<1<b^{\prime}$. Moreover, $v_{s}=0$ on $\Gamma$ by
(\ref{equation u t}) and (\ref{Phi s Gamma}), while $v_{s}(x,3\varepsilon
_{0})=u_{s}(x,3\varepsilon_{0}+s\varphi(x))$. Since $\left\Vert \varphi
\right\Vert _{C^{2}([a,b])}<\varepsilon_{0}$, we have that $(x,3\varepsilon
_{0}+s\varphi(x))\in(a^{\prime},b^{\prime})\times(2\varepsilon_{0}%
,4\varepsilon_{0})$.

By (\ref{D phi s -I}) and (\ref{s5}) we have that
\[
\left\Vert u_{s}\right\Vert _{H^{1}(\Phi_{s}(\Omega_{+}))}\leq C,
\]
where $C$ depends on $\Omega_{+}$ and $\left\Vert u\right\Vert _{H^{1}%
(\Omega_{+})}$. By the lateral periodicity of $u_{s}$, the same estimate holds
with $\Phi_{s}(\Omega_{+})$ replaced by $\Phi_{s}(((a^{\prime},b^{\prime
})\times(3\varepsilon_{0},\infty))\cap\{u>0\})$. In turn, by (\ref{s0}) and
the chain rule
\[
\left\Vert v_{s}\right\Vert _{H^{1}(((a^{\prime},b^{\prime})\times
(3\varepsilon_{0},\infty))\cap\{u>0\})}\leq C.
\]
It follows from \cite[Theorem~9.13]{GilbargTrud}, with $T$ the graph of $w$
restricted to $(a^{\prime},b^{\prime})$, that
\[
\left\Vert v_{s}\right\Vert _{H^{2}(((a^{\prime\prime},b^{\prime\prime}%
)\times(4\varepsilon_{0},\infty))\cap\{u>0\})}\leq C
\]
for $a^{\prime}<a^{\prime\prime}<-1<1<b^{\prime\prime}<b^{\prime}$. By the
continuous immersion of $H^{2}(((a^{\prime\prime},b^{\prime\prime}%
)\times(4\varepsilon_{0},\infty))\cap\{u>0\})$ into $C^{0,\alpha}%
(((a^{\prime\prime},b^{\prime\prime})\times(4\varepsilon_{0},\infty
))\cap\{u>0\})$, we have
\[
\left\Vert v_{s}\right\Vert _{C^{0,\alpha}(((a^{\prime\prime},b^{\prime\prime
})\times(4\varepsilon_{0},\infty))\cap\{u>0\})}\leq C.
\]
By \cite[Corollary~6.7]{GilbargTrud}, with $T$ the graph of $w$ restricted to
$(a^{\prime\prime},b^{\prime\prime})$, and using a covering argument, we obtain that
there exists an $\varepsilon_{1}$-neighborhood $\Gamma_{1}$ of $\Gamma$ such that
\begin{equation}
\left\Vert v_{s}\right\Vert _{C^{2,\alpha}(\Gamma_{1}\cap\Omega_{0})}\leq C
\label{s9}%
\end{equation}
for some $0<\varepsilon_{1}<\varepsilon_{0}$. By (\ref{s8}) and the chain rule,
we have that
\begin{equation}
\left\Vert v_{s}\right\Vert _{C^{2,\alpha}((-1,1)\times(3\varepsilon
_{0},4\varepsilon_{0}))}\leq C. \label{s10}%
\end{equation}
In the remaining set we can now use the interior Schauder's estimate in
\cite[Corollary~6.3]{GilbargTrud} to conclude, also by (\ref{s9}) and
(\ref{s10}), that there exists a constant $C$ depending only on $\left\Vert
w\right\Vert _{C^{2,\alpha}(-1,1)}$ and $\left\Vert u\right\Vert
_{H^{1}(\Omega_{+})}$ such that
\begin{equation}
\left\Vert v_{s}\right\Vert _{C^{2,\alpha}(\Omega_{0})}\leq C \label{s1}%
\end{equation}
for all $s\in(0,1)$.

By (\ref{equation u}) and (\ref{s0a}),
\[
\Delta(v_{s}-u)=-(s\varphi^{\prime})^{2}\partial_{y}^{2}v_{s}+2s\varphi
^{\prime}\partial_{xy}^{2}v_{s}+s\varphi^{\prime\prime}\partial_{y}v_{s}%
\quad\text{in }\Omega_{0}.
\]
Since $v_{s}-u=0$ on $\Gamma$, we can argue as in Step~1, and from standard estimates,
Poincar\'{e} inequality, (\ref{s1}), and the fact that $\left\Vert
\varphi\right\Vert _{C^{2}([a,b])}<\varepsilon_{0}$, we obtain 
\begin{align*}
\left\Vert v_{s}-u\right\Vert _{H^{1}(\Omega_{0})}  &  \leq C\left\Vert
\varphi\right\Vert _{C^{2}([a,b])}+C\left\Vert v_{s}-u\right\Vert
_{C^{1}((-1,1)\times\{3\varepsilon_{0}\})}\\
&  \leq C\left\Vert \varphi\right\Vert _{C^{2}([a,b])}%
\end{align*}
where the last inequality follows from the chain rule and (\ref{s8}). By the
lateral periodicity of $v_{s}$ and $u$, the same estimate holds with
$\Omega_{0}$ replaced by $((a^{\prime},b^{\prime})\times(3\varepsilon
_{0},\infty))\cap\{u>0\}$. Again by \cite[Theorem~9.13]{GilbargTrud}, with $T$
the graph of $w$ restricted to $(a^{\prime},b^{\prime})$, we deduce that
\begin{align*}
\left\Vert v_{s}-u\right\Vert _{H^{2}(((a^{\prime\prime},b^{\prime\prime
})\times(4\varepsilon_{0},\infty))\cap\{u>0\})}  &  \leq C\left\Vert
v_{s}-u\right\Vert _{H^{1}(((a^{\prime},b^{\prime})\times(3\varepsilon
_{0},\infty))\cap\{u>0\})}\\
&  \quad+C\left\Vert \varphi\right\Vert _{C^{2}([a,b])}\left\Vert
v_{s}\right\Vert _{H^{2}(((a^{\prime},b^{\prime})\times(3\varepsilon
_{0},\infty))\cap\{u>0\})}\\
&  \leq C\left\Vert \varphi\right\Vert _{C^{2}([a,b])}%
\end{align*}
for $a^{\prime}<a^{\prime\prime}<-1<1<b^{\prime\prime}<b^{\prime}$, and where
we have used the previous inequality and (\ref{s1}), which holds in
$((a^{\prime\prime},b^{\prime\prime})\times(3\varepsilon_{0},\infty
))\cap\{u>0\}$ by lateral periodicity.

By \cite[Corollary~6.7]{GilbargTrud}, with $T$ the graph of $w$ restricted to
$(a^{\prime\prime},b^{\prime\prime})$, and a covering argument, we have that
\begin{align*}
\left\Vert v_{s}-u\right\Vert _{C^{2,\alpha}(\Gamma_{1}\cap\Omega_{0})}  &
\leq C\left\Vert v_{s}-u\right\Vert _{C^{0}(((a^{\prime\prime},b^{\prime
\prime})\times(3\varepsilon_{0},\infty))\cap\{u>0\})}\\
&  \quad+C\left\Vert \varphi\right\Vert _{C^{2,\alpha}([a,b])}\left\Vert
v_{s}\right\Vert _{C^{2,\alpha}(((a^{\prime\prime},b^{\prime\prime}%
)\times(3\varepsilon_{0},\infty))\cap\{u>0\})}\\
&  \leq C\left\Vert \varphi\right\Vert _{C^{2}([a,b])},
\end{align*}
where we used the fact that the estimate (\ref{s1}) holds in $((a^{\prime
\prime},b^{\prime\prime})\times(3\varepsilon_{0},\infty))\cap\{u>0\}$ by
lateral periodicity. We can now continue as before using (\ref{s8}) and
\cite[Corollary~6.3]{GilbargTrud} to conclude that
\begin{equation}
\left\Vert v_{s}-u\right\Vert _{C^{2,\alpha}(\Omega_{0})}\leq C\left\Vert
\varphi\right\Vert _{C^{2,\alpha}(a,b)}. \label{s2b}%
\end{equation}

\noindent\textbf{Step 3:} Since
\begin{equation}
Q^{2}(x,w(x))-|\nabla u(x,w(x))|^{2}=0 \label{s2a}%
\end{equation}
for all $x\in\lbrack-1,1]$, by (\ref{equation u}) and (\ref{u stat}) it
follows that
\begin{align*}
\big|Q^{2}  &  (x,w(x)+s\varphi(x))-|\nabla u_{s}(x,w(x)+s\varphi
(x))|^{2}\big|\\
&  \leq|Q^{2}(x,w(x)+s\varphi(x))-Q^{2}(x,w(x))|\\
&  \quad+\big||\nabla u_{s}(x,w(x)+s\varphi(x))|^{2}-|\nabla u(x,w(x))|^{2}%
\big|\\
&  \leq\left\Vert Q^{2}\right\Vert _{C^{1}}\left\Vert \varphi\right\Vert
_{C^{0}}+\big||\nabla u_{s}(x,w(x)+s\varphi(x))|^{2}-|\nabla u(x,w(x))|^{2}%
\big|.
\end{align*}
By (\ref{s0}), (\ref{s1}), (\ref{s2b}), and the chain rule, the last term on
the right-hand side can be estimated from above by%
\begin{align*}
&  C(\left\Vert \nabla u_{s}\right\Vert _{C^{0}}+\left\Vert \nabla
u\right\Vert _{C^{0}})|\nabla u_{s}(x,w(x)+s\varphi(x))-\nabla u(x,w(x))|\\
&  \leq C(\left\Vert \nabla u_{s}\right\Vert _{C^{0}}+\left\Vert \nabla
u\right\Vert _{C^{0}})\big(|\nabla v_{s}(x,w(x))-\nabla u(x,w(x))|+\left\Vert
\nabla v_{s}\right\Vert _{C^{0}}|\varphi^{\prime}(x)|\big)\\
&  \leq C\left\Vert \varphi\right\Vert _{C^{2,\alpha}(a,b)},
\end{align*}
where, as before, $C$ depends only on $\left\Vert w\right\Vert _{C^{2,\alpha
}(-1,1)}$ and $\left\Vert u\right\Vert _{C^{1}(\Omega_{+})}$.
On the other hand by (\ref{s2a}),
\[
\partial_{x}\big(Q^{2}(x,w(x))-|\nabla u(x,w(x))|^{2}\big)=0
\]
for all $x\in\lbrack-1,1]$, and so%
\begin{align*}
\big|\partial_{x}\big(Q^{2}  &  (x,w(x)+s\varphi(x))-|\nabla u_{s}%
(x,w(x)+s\varphi(x))|^{2}\big)\big|\\
&  \leq\big|\partial_{x}\big(Q^{2}(x,w(x)+s\varphi(x))-Q^{2}%
(x,w(x))\big)\big|\\
&  \quad+\big|\partial_{x}\big(|\nabla u_{s}(x,w(x)+s\varphi(x))|^{2}-|\nabla
u(x,w(x))|^{2}\big)\big|\\
&  \leq C\left\Vert Q^{2}\right\Vert _{C^{1,1}}\left\Vert \varphi\right\Vert
_{C^{1}}+\big|\partial_{x}\big(|\nabla u_{s}(x,w(x)+s\varphi(x))|^{2}-|\nabla
u(x,w(x))|^{2}\big)\big|,
\end{align*}
where $C$ depends only on $\left\Vert w\right\Vert _{C^{1}(-1,1)}$. The last
term on the right-hand side can be estimated from above by%
\begin{align*}
&  C\left\Vert \nabla u_{s}\right\Vert _{C^{0}}|\nabla^{2}u_{s}%
(x,w(x)+s\varphi(x))-\nabla^{2}u(x,w(x))|\\
&  +C\left\Vert \nabla^{2}u\right\Vert _{C^{0}}|\nabla u_{s}(x,w(x)+s\varphi
(x))-\nabla u(x,w(x))|+C\left\Vert u\right\Vert _{C^{2}}^{2}|\varphi^{\prime
}(x)|,
\end{align*}
where, as before, $C$ depends only on $\left\Vert w\right\Vert _{C^{2,\alpha
}(-1,1)}$ and $\left\Vert u\right\Vert _{C^{2}(\Omega_{+})}$. By (\ref{s0})
and the chain rule, we have that
\begin{align*}
&  |\nabla^{2}u_{s}(x,w(x)+s\varphi(x))-\nabla^{2}u(x,w(x))|\\
&  \leq|\nabla^{2}v_{s}(x,w(x))-\nabla^{2}u(x,w(x))|+C\left\Vert
v_{s}\right\Vert _{C^{2,\alpha}}\left\Vert \varphi\right\Vert _{C^{2}%
([a,b])}\leq C\left\Vert \varphi\right\Vert _{C^{2,\alpha}(a,b)},
\end{align*}
where in the last inequality we used (\ref{s1}) and (\ref{s2b}). A similar
estimate holds for $|\nabla u_{s}(x,w(x)+s\varphi(x))-\nabla u(x,w(x))|$. This
concludes the proof.
\end{proof}

\begin{remark}
The proof of the previous proposition could be significantly simplified if we
could show that the diffeomorphism $\Phi_{s}$ is of class $C^{2,\alpha}$
rather than just $C^{2}$, and if we had uniform estimates on the $C^{2,\alpha}$
norm of $\Phi_{s}$ in terms of $\left\Vert w\right\Vert _{C^{2,\alpha}(-1,1)}$
and $\left\Vert \varphi\right\Vert _{C^{2,\alpha}(a,b)}$.
Indeed, the $C^{2,\alpha}$ bounds on $u_s$ and $v_s$ would follow in this case
from standard elliptic estimates. 
\end{remark}

Next we estimate the second integral on the right-hand side of (\ref{2var}).

\begin{proposition}
\label{proposition estimate 1}Let $Q\in C^{1,1}(\Omega)$, let $\varphi$ be as in Theorem~\ref{theorem second variation positive}, and let $u_{s}$ be the solution to
problem (\ref{equation u t}), where $\Phi_{s}$ is given by (\ref{phi s}). Then
there exists $C>0$, depending only on $\left\Vert w\right\Vert _{C^{2,\alpha
}(-1,1)}$ and $\left\Vert u\right\Vert _{C^{2}(\Omega_{+})}$, such that for
every $s\in\lbrack0,1]$ and every $\psi\in C(\Gamma_{s})$,%
\begin{equation}
\Big|\int_{\Gamma_{s}}\kappa_{s}(\partial_{\nu_{s}}u_{s})^{2}\psi
^{2}d\mathcal{H}^{1}-\int_{\Gamma}\kappa(\partial_{\nu}u)^{2}\psi^{2}\circ
\Phi_{s}\,d\mathcal{H}^{1}\Big|\leq C\left\Vert \varphi\right\Vert
_{C^{2}([a,b])}\int_{\Gamma_{s}}\psi^{2}d\mathcal{H}^{1} \label{stima 100}%
\end{equation}
and%
\begin{equation}
\Big|\int_{\Gamma_{s}}\partial_{\nu_{s}}Q^{2}~\psi^{2}d\mathcal{H}^{1}%
-\int_{\Gamma}\partial_{\nu}Q^{2}~\psi^{2}\circ\Phi_{s}\,d\mathcal{H}%
^{1}\Big|\leq C\left\Vert \varphi\right\Vert _{C^{1}([a,b])}\int_{\Gamma_{s}%
}\psi^{2}\,d\mathcal{H}^{1}. \label{stima 101}%
\end{equation}

\end{proposition}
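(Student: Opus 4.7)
The plan is to pull both integrals back to the fixed curve $\Gamma$ using the change-of-variables formula for one-dimensional Hausdorff measure, and then estimate the resulting difference of integrands pointwise on $\Gamma$. Using the tangential Jacobian $J_{\Phi_s} := |(D\Phi_s)^{-T}\nu|\det D\Phi_s$, for any $f\in C(\Gamma_s)$ we have $\int_{\Gamma_s} f\,d\mathcal{H}^1 = \int_\Gamma (f\circ\Phi_s)J_{\Phi_s}\,d\mathcal{H}^1$. From \eqref{D phi s -I} we have $\|D\Phi_s-I_{2\times 2}\|_{C^0}\leq C\|\varphi\|_{C^2([a,b])}$, so $J_{\Phi_s}=1+O(\|\varphi\|_{C^2([a,b])})$ uniformly on $\Gamma$, and in particular $\int_\Gamma \psi^2\circ\Phi_s\,d\mathcal{H}^1 \leq 2\int_{\Gamma_s}\psi^2\,d\mathcal{H}^1$ once $\|\varphi\|_{C^2([a,b])}$ is small enough.

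For \eqref{stima 100}, use that $(\partial_{\nu_s}u_s)^2=|\nabla u_s|^2$ on $\Gamma_s$ by \eqref{equation u t} to rewrite the difference as $\int_\Gamma E_s\,(\psi^2\circ\Phi_s)\,d\mathcal{H}^1$, where
\[
E_s:=(\kappa_s\circ\Phi_s)(|\nabla u_s|^2\circ\Phi_s)\,J_{\Phi_s}-\kappa(\partial_\nu u)^2.
\]
Next I bound $E_s$ in $C^0(\Gamma)$ by a telescoping argument. Since $\Gamma$ and $\Gamma_s$ are graphs of $w$ and $w+s\varphi$ respectively, the explicit formula $\kappa_s=\mp(w''+s\varphi'')/(1+(w'+s\varphi')^2)^{3/2}$, together with $\|\Phi_s-\mathrm{Id}\|_{C^0}\leq C\|\varphi\|_{C^2([a,b])}$, yields $|(\kappa_s\circ\Phi_s)-\kappa|\leq C\|\varphi\|_{C^2([a,b])}$. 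The critical-point identity $(\partial_\nu u)^2=Q^2$ on $\Gamma$, combined with Proposition~\ref{proposition estimates us}, gives $\bigl||\nabla u_s|^2\circ\Phi_s-(\partial_\nu u)^2\bigr|\leq C\|\varphi\|_{C^{2,\alpha}(a,b)}$ on $\Gamma$ (after controlling $Q^2\circ\Phi_s-Q^2$ by the Lipschitz regularity of $Q^2$). Multiplying out and using that $\kappa$, $|\partial_\nu u|$, $\|\nabla u_s\|_{C^0}$ are bounded by the regularity of $u$ and $w$, we conclude $\|E_s\|_{C^0(\Gamma)}\leq C\|\varphi\|_{C^2([a,b])}$. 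Estimate \eqref{stima 100} follows.

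For \eqref{stima 101} the scheme is the same but simpler. The difference integrand to control is $(\partial_{\nu_s}Q^2\circ\Phi_s)\,J_{\Phi_s}-\partial_\nu Q^2$. Since $Q\in C^{1,1}(\Omega)$, $\nabla Q^2$ is Lipschitz, so $|\nabla Q^2\circ\Phi_s-\nabla Q^2|\leq C\|\Phi_s-\mathrm{Id}\|_{C^0}\leq C\|\varphi\|_{C^1([a,b])}$. The normals satisfy $|\nu_s\circ\Phi_s-\nu|\leq C\|\varphi\|_{C^1([a,b])}$ (only first derivatives of the graph parametrizations enter), and $J_{\Phi_s}-1=O(\|\varphi\|_{C^1([a,b])})$. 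Combining, the integrand difference is $O(\|\varphi\|_{C^1([a,b])})$, which gives \eqref{stima 101} after the same reverse change of variables as above.

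The main technical obstacle is that $\kappa_s$ depends genuinely on $\varphi''$, so even the geometric comparison between the two curvatures consumes the full $C^2$-norm of $\varphi$; meanwhile, the replacement of the unknown $(\partial_{\nu_s}u_s)^2$ by the data $Q^2$ on $\Gamma_s$ is exactly what Proposition~\ref{proposition estimates us} provides, so the nonlinear/PDE part of the estimate is reduced to a clean geometric one. Everything else (Jacobian, normal, curvature expansions) is a routine Taylor-expansion computation once the smoothness of $\Phi_s$ from Section~\ref{section diff} is in hand.
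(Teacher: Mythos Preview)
Your proof is correct and follows essentially the same approach as the paper: pull back the $\Gamma_s$-integral to $\Gamma$ via the area formula, then estimate the difference of integrands pointwise using the explicit curvature formula (the paper's \eqref{s13}), the Jacobian expansion coming from \eqref{D phi s -I}, and the $C^{2,\alpha}$ closeness of $u_s$ to $u$ along the free boundaries. The only organisational difference is in how the normal-derivative term is handled: the paper compares $\partial_{\nu_s}u_s$ with $\partial_\nu u$ directly via the auxiliary function $v_s(x,y)=u_s(x,y+s\varphi(x))$ and the Schauder estimate $\|v_s-u\|_{C^{2,\alpha}(\Omega_0)}\leq C\|\varphi\|_{C^{2,\alpha}}$ (see \eqref{s11}--\eqref{s12}), whereas you route through Proposition~\ref{proposition estimates us} together with the stationarity condition \eqref{u stat} to replace $|\nabla u_s|^2$ first by $Q^2$ and then by $(\partial_\nu u)^2$. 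Since \eqref{u stat} is part of the standing hypotheses and Proposition~\ref{proposition estimates us} is itself built on the very same $v_s$-estimates, the two arguments are equivalent in substance; the paper's route is slightly more self-contained (it does not invoke \eqref{u stat}), while yours has the virtue of citing only stated results rather than intermediate formulas from another proof.
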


\begin{proof}
Let $v_{s}$ and $\Omega_{0}$ be defined as in (\ref{s0}) and (\ref{Omega 0}).
Then, by (\ref{s1}) and (\ref{s2b}),
\begin{equation}
\left\Vert v_{s}\right\Vert _{C^{2,\alpha}(\Omega_{0})}\leq C,\quad\left\Vert
v_{s}-u\right\Vert _{C^{2,\alpha}(\Omega_{0})}\leq C\left\Vert \varphi
\right\Vert _{C^{2,\alpha}(a,b)} \label{s11}%
\end{equation}
for some constant $C>0$ depending only on $\left\Vert w\right\Vert
_{C^{2,\alpha}(-1,1)}$ and $\left\Vert u\right\Vert _{C^{2}(\Omega_{+})}$. By
the chain rule,%
\begin{align*}
\partial_{\nu_{s}}  &  u_{s}(x,w+s\varphi)=\nabla u_{s}(x,w+s\varphi)\cdot
\nu_{s}(x,w+s\varphi)\\
&  =\nabla v_{s}(x,w)\cdot\nu(x,w)-s\varphi^{\prime}\partial_{y}%
v_{s}(x,w)e_{1}\cdot\nu_{s}(x,w+s\varphi)\\
&  \quad+\nabla v_{s}(x,w)\cdot(\nu_{s}(x,w+s\varphi)-\nu(x,w)).
\end{align*}
Using (\ref{s11}),%
\begin{equation}
|\partial_{\nu_{s}}u_{s}(x,w+s\varphi)-\partial_{\nu}u(x,w)|\leq|\partial
_{\nu}v_{s}(x,w)-\partial_{\nu}u(x,w)|+C\left\Vert \varphi\right\Vert
_{C^{1}([a,b])}, \label{s12}%
\end{equation}
where to estimate $|\nu_{s}(x,w+s\varphi)-\nu(x,w)|$ we used the fact that the function $t\mapsto\frac{1}{\sqrt{1+t^{2}}}$ is
$1$-Lipschitz. Similarly,
\begin{align}
|\kappa_{s}(x,w+s\varphi)-\kappa(x,w)|  &  =\Big|\frac{w^{\prime\prime
}+s\varphi^{\prime\prime}}{(1+(w^{\prime}+s\varphi^{\prime})^{2})^{3/2}}%
-\frac{w^{\prime\prime}}{(1+(w^{\prime})^{2})^{3/2}}\Big|\label{s13}\\
&  \leq C\left\Vert \varphi\right\Vert _{C^{2}([a,b])}.\nonumber
\end{align}
Combining (\ref{s11}), (\ref{s12}), and (\ref{s13}), and using a change of
variable, we obtain (\ref{stima 100}).

On the other hand,%
\begin{align*}
\partial_{\nu_{s}}Q^{2}(x,w+s\varphi)-\partial_{\nu}Q^{2}(x,w)  &  =\nabla
Q^{2}(x,w+s\varphi)\cdot(\nu_{s}(x,w+s\varphi)-\nu(x,w))\\
&  \quad+(\nabla Q^{2}(x,w+s\varphi)-\nabla Q^{2}(x,w))\cdot\nu(x,w),
\end{align*}
and so%
\[
|\partial_{\nu_{s}}Q^{2}(x,w+s\varphi)-\partial_{\nu}Q^{2}(x,w)|\leq
C\left\Vert \varphi\right\Vert _{C^{1}([a,b])},
\]
which gives (\ref{stima 101}).
\end{proof}

We now estimate the first integral on the right-hand side of (\ref{2var}).

\begin{proposition}
\label{proposition estimate 2}Let $\varphi$ be as in Theorem~\ref{theorem second variation positive}, 
and let $u_{s}$ be the solution to
problem (\ref{equation u t}), where $\Phi_{s}$ is given by (\ref{phi s}). Then
there exists $C>0$, depending only on $\left\Vert w\right\Vert _{C^{2,\alpha
}(-1,1)}$ and $\left\Vert u\right\Vert _{C^{2}(\Omega_{+})}$, such that for
every $s\in\lbrack0,1]$ and every $\psi\in C^{1}(\Gamma_{s})$,
\[
\Big|\int_{\Phi_{s}(\Omega_{+})}|\nabla u_{\psi}^{s}|^{2}\,d\boldsymbol{x}%
-\int_{\Omega_{+}}|\nabla u_{\psi\circ\Phi_{s}}^{0}|^{2}\,d\boldsymbol{x}%
\Big|\leq C\left\Vert \varphi\right\Vert _{C^{2,\alpha}(a,b)}\left\Vert
\psi\right\Vert _{H^{1/2}(\Gamma_{s})}^{2},
\]
where $u_{\psi}^{s}$ is the unique solution to the problem%
\[%
\begin{cases}
\Delta u_{\psi}^{s}=0 & \text{in }\Phi_{s}(\Omega_{+}),\\
u_{\psi}^{s}=-\psi\partial_{\nu_{s}}u_{s} & \text{on }\Gamma_{s},\\
u_{\psi}^{s}=0 & \text{on }\Phi_{s}(\{y=0\}\cap\partial\Omega_{+}).
\end{cases}
\]
with $u_{\psi}^{s}(-1,y)=u_{\psi}^{s}(1,y)$ for all $y$ such that $(\pm
1,y)\in\Phi_{s}(\overline{\Omega_{+}})$.
\end{proposition}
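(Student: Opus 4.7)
\textbf{Proof proposal for Proposition~\ref{proposition estimate 2}.} The strategy is to pull both integrals back to the reference domain $\Omega_+$ via $\Phi_s$, and then carry out a perturbative elliptic estimate comparing $\hat u_\psi^s := u_\psi^s \circ \Phi_s$ to $u_{\psi\circ\Phi_s}^0$. By the change of variables formula and (\ref{As}),
\[
\int_{\Phi_s(\Omega_+)}|\nabla u_\psi^s|^2\,d\boldsymbol{x} = \int_{\Omega_+} A_s\nabla\hat u_\psi^s\cdot\nabla\hat u_\psi^s\,d\boldsymbol{y},
\]
and $\hat u_\psi^s$ satisfies $\operatorname{div}(A_s\nabla\hat u_\psi^s)=0$ in $\Omega_+$, with boundary datum $-(\psi\circ\Phi_s)\big((\partial_{\nu_s}u_s)\circ\Phi_s\big)$ on $\Gamma$, vanishing trace on $\{y=0\}\cap\partial\Omega_+$, and the usual periodicity. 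From (\ref{As}) and the bound $|D\Phi_s-I|\leq C\Vert\varphi\Vert_{C^2}$ established in Section~\ref{section diff} (compare (\ref{s4})), we have $\Vert A_s-I\Vert_{\infty}\leq C\Vert\varphi\Vert_{C^2}$ and $A_s$ is uniformly elliptic for $\Vert\varphi\Vert_{C^{2,\alpha}}$ small.

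Set $r_s:=\hat u_\psi^s-u_{\psi\circ\Phi_s}^0$. Since $\Delta u_{\psi\circ\Phi_s}^0=0$, the function $r_s$ solves
\[
\operatorname{div}(A_s\nabla r_s) = -\operatorname{div}\big((A_s-I)\nabla u_{\psi\circ\Phi_s}^0\big) \quad\text{in }\Omega_+,
\]
with $r_s=-(\psi\circ\Phi_s)\big[(\partial_{\nu_s}u_s)\circ\Phi_s-\partial_{\nu}u\big]$ on $\Gamma$, $r_s=0$ on $\{y=0\}\cap\partial\Omega_+$, and periodicity. Expanding the quadratic form,
\begin{align*}
\int_{\Omega_+}A_s\nabla\hat u_\psi^s\cdot\nabla\hat u_\psi^s\,d\boldsymbol{y}-\int_{\Omega_+}|\nabla u_{\psi\circ\Phi_s}^0|^2\,d\boldsymbol{y}
= &\int_{\Omega_+}(A_s-I)\nabla u_{\psi\circ\Phi_s}^0\cdot\nabla u_{\psi\circ\Phi_s}^0\,d\boldsymbol{y}\\
 &+ 2\int_{\Omega_+}A_s\nabla u_{\psi\circ\Phi_s}^0\cdot\nabla r_s\,d\boldsymbol{y}+\int_{\Omega_+}A_s\nabla r_s\cdot\nabla r_s\,d\boldsymbol{y}.
\end{align*}
Since $\partial_\nu u\in C^{1,\alpha}(\Gamma)$ and does not vanish on $\Gamma$ (by (\ref{u stat}) and (\ref{function Q}) with strict positivity on the free boundary), standard elliptic regularity gives $\Vert u_{\psi\circ\Phi_s}^0\Vert_{H^1(\Omega_+)}\leq C\Vert\psi\circ\Phi_s\Vert_{H^{1/2}(\Gamma)}\leq C\Vert\psi\Vert_{H^{1/2}(\Gamma_s)}$, where the last inequality uses that $\Phi_s$ is a $C^2$ near-identity diffeomorphism.

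The remaining task is to estimate $\Vert r_s\Vert_{H^1(\Omega_+)}$. By the standard energy estimate for the divergence-form equation,
\[
\Vert r_s\Vert_{H^1(\Omega_+)}\leq C\Big(\Vert r_s\Vert_{H^{1/2}(\Gamma)}+\Vert (A_s-I)\nabla u_{\psi\circ\Phi_s}^0\Vert_{L^2(\Omega_+)}\Big),
\]
and the second term is clearly bounded by $C\Vert\varphi\Vert_{C^2}\Vert\psi\Vert_{H^{1/2}(\Gamma_s)}$. For the boundary datum, using Step~3 of the proof of Proposition~\ref{proposition estimates us} (in particular estimate (\ref{s12}) together with (\ref{s11})), the factor $(\partial_{\nu_s}u_s)\circ\Phi_s-\partial_\nu u$ belongs to $C^{1,\alpha}(\Gamma)$ with norm $\leq C\Vert\varphi\Vert_{C^{2,\alpha}(a,b)}$; a standard $H^{1/2}$-multiplier inequality (multiplication by a $C^{0,\beta}$ function with $\beta>1/2$ is continuous on $H^{1/2}$) then yields $\Vert r_s\Vert_{H^{1/2}(\Gamma)}\leq C\Vert\varphi\Vert_{C^{2,\alpha}(a,b)}\Vert\psi\Vert_{H^{1/2}(\Gamma_s)}$. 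Combining, $\Vert r_s\Vert_{H^1}\leq C\Vert\varphi\Vert_{C^{2,\alpha}(a,b)}\Vert\psi\Vert_{H^{1/2}(\Gamma_s)}$, so the cross term and the quadratic term are bounded by $C\Vert\varphi\Vert_{C^{2,\alpha}(a,b)}\Vert\psi\Vert_{H^{1/2}(\Gamma_s)}^2$. The first term on the right-hand side of the expansion is controlled directly by $\Vert A_s-I\Vert_\infty\Vert u_{\psi\circ\Phi_s}^0\Vert_{H^1}^2$, which is of the same order. Summing all contributions gives the claimed bound.

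The main obstacle is obtaining the sharp $H^{1/2}(\Gamma)$ estimate for the boundary datum of $r_s$: it requires both the pointwise (and $C^{1,\alpha}$) control on $(\partial_{\nu_s}u_s)\circ\Phi_s-\partial_\nu u$ coming from Proposition~\ref{proposition estimates us} and a careful use of $H^{1/2}$ multiplier estimates so that no regularity of $\psi$ beyond $H^{1/2}$ is consumed; everything else is a standard energy estimate for the divergence-form elliptic operator with coefficients $A_s$.
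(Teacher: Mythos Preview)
Your proposal is correct and follows essentially the same route as the paper: pull back to $\Omega_+$ via $\Phi_s$, compare $\hat u_\psi^s:=u_\psi^s\circ\Phi_s$ with $u_{\psi\circ\Phi_s}^0$, and control the difference by combining the smallness of $A_s-I$ with the $C^{2,\alpha}$ estimate $\Vert(\partial_{\nu_s}u_s)\circ\Phi_s-\partial_\nu u\Vert$ coming from Proposition~\ref{proposition estimates us} (via (\ref{s11})), together with an $H^{1/2}$ multiplier bound. The only cosmetic differences are that the paper uses the $H(\operatorname{div};\Omega_+)$ normal-trace pairing rather than a black-box energy estimate, and it splits the final comparison as $\int_{\Phi_s(\Omega_+)}|\nabla u_\psi^s|^2-\int_{\Omega_+}|\nabla\hat u_\psi^s|^2$ plus $\int_{\Omega_+}|\nabla\hat u_\psi^s|^2-\int_{\Omega_+}|\nabla u_{\psi\circ\Phi_s}^0|^2$ instead of your three-term expansion of the quadratic form; the substance is the same. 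One inessential remark: the non-vanishing of $\partial_\nu u$ that you invoke is neither guaranteed by the hypotheses of Theorem~\ref{theorem main} (where $Q$ may vanish) nor needed for the forward bound $\Vert u_{\psi\circ\Phi_s}^0\Vert_{H^1}\le C\Vert\psi\circ\Phi_s\Vert_{H^{1/2}}$---only the $C^{0,1}$ regularity of $\partial_\nu u$ is used there.
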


\begin{proof}
Reasoning as in the proof of (\ref{variant}), we have%
\[
\int_{\Phi_{s}(\Omega_{+})}|\nabla u_{\psi}^{s}|^{2}\,d\boldsymbol{x}%
=\int_{\Gamma_{s}}u_{\psi}^{s}\partial_{\nu_{s}}u_{\psi}^{s}\,d\mathcal{H}%
^{1}.
\]
Define%
\[
\hat{u}_{\psi}^{s}:=u_{\psi}^{s}\circ\Phi_{s}.
\]
Then $\hat{u}_{\psi}^{s}$ satisfies
\begin{equation}\label{ir03}
\left\{
\begin{array}
[c]{ll}%
\operatorname{div}(A_{s}\nabla\hat{u}_{\psi}^{s})=0 & \text{in }\Omega_{+},\\
\hat{u}_{\psi}^{s}=-(\psi\partial_{\nu_{s}}u_{s})\circ\Phi_{s} & \text{on
}\Gamma,\\
\hat{u}_{\psi}^{s}=0 & \text{on }\overline{\Omega_{+}}\cap\{y=0\},
\end{array}
\right.
\end{equation}
with $\hat{u}_{\psi}^{s}(-1,y)=\hat{u}_{\psi}^{s}(1,y)$ for all $y$ such that
$(\pm1,y)\in\overline{\Omega_{+}}$, where $A_{s}$ is given by (\ref{As}).
Multiplying the first equation in (\ref{ir03}) by $\hat{u}_{\psi}^{s}$ and by the divergence
theorem, we obtain%
\begin{align*}
\int_{\Omega_{+}}(A_{s}\nabla\hat{u}_{\psi}^{s})\cdot\nabla\hat{u}_{\psi}%
^{s}\,d\boldsymbol{x}  &  \leq\left\Vert (A_{s}\nabla\hat{u}_{\psi}^{s}%
)\cdot\nu\right\Vert _{H^{-1/2}(\Gamma)}\left\Vert (\psi\partial_{\nu_{s}%
}u_{s})\circ\Phi_{s}\right\Vert _{H^{1/2}(\Gamma)}\\
&  \leq C\left\Vert A_{s}\nabla\hat{u}_{\psi}^{s}\right\Vert _{L^{2}%
(\Omega_{+})}\left\Vert (\psi\partial_{\nu_{s}}u_{s})\circ\Phi_{s}\right\Vert
_{H^{1/2}(\Gamma)}\\
&  \leq C\left\Vert \nabla\hat{u}_{\psi}^{s}\right\Vert _{L^{2}(\Omega_{+}%
)}\left\Vert (\psi\partial_{\nu_{s}}u_{s})\circ\Phi_{s}\right\Vert
_{H^{1/2}(\Gamma)},
\end{align*}
where we used (\ref{s4}) and the continuity of the normal trace in the space
$H(\operatorname{div};\Omega_{+})$ (see, e.g., \cite[Section 3.2]%
{boyer-fabin}), and where the constant $C$ depends only on $\Omega
_{+}$. The previous estimate, together with (\ref{D phi s -I}) and (\ref{s4}),
implies that
\begin{align*}
\left\Vert \nabla\hat{u}_{\psi}^{s}\right\Vert _{L^{2}(\Omega_{+})}  &  \leq
C\left\Vert (\psi\partial_{\nu_{s}}u_{s})\circ\Phi_{s}\right\Vert
_{H^{1/2}(\Gamma)}\leq C\left\Vert \psi\partial_{\nu_{s}}u_{s}\right\Vert
_{H^{1/2}(\Gamma_{s})}\\
&  \leq C\left\Vert \psi\right\Vert _{L^{2}(\Gamma_{s})}\left\Vert
\partial_{\nu_{s}}u_{s}\right\Vert _{C^{0,1}(\Gamma_{s})}+C\left\vert
\psi\right\vert _{H^{1/2}(\Gamma_{s})}\left\Vert \partial_{\nu_{s}}%
u_{s}\right\Vert _{C^{0}(\Gamma_{s})}\\
&  \leq C\left\Vert \psi\right\Vert _{H^{1/2}(\Gamma_{s})},
\end{align*}
where in the last inequality we reasoned as in the proof of Proposition~\ref{proposition estimate 1} and used (\ref{s11}). By the Poincar\'{e} inequality, we get
\begin{equation}
\left\Vert \hat{u}_{\psi}^{s}\right\Vert _{H^{1}(\Omega_{+})}\leq C\left\Vert
\psi\right\Vert _{H^{1/2}(\Gamma_{s})}. \label{h1}%
\end{equation}

On the other hand,
\[%
\begin{cases}
\operatorname{div}(A_{s}\nabla(\hat{u}_{\psi}^{s}-u_{\psi\circ\Phi_{s}}%
^{0}))=-\operatorname{div}(B_{s}\nabla u_{\psi\circ\Phi_{s}}^{0}) & \text{in
}\Omega_{+},\\
\hat{u}_{\psi}^{s}-u_{\psi\circ\Phi_{s}}^{0}=-(\psi\partial_{\nu_{s}}%
u_{s})\circ\Phi_{s}+(\psi\circ\Phi_{s})\partial_{\nu}u & \text{on }\Gamma,\\
\hat{u}_{\psi}^{s}-u_{\psi\circ\Phi_{s}}^{0}=0 & \text{on }\{y=0\}\cap
\partial\Omega_{+},
\end{cases}
\]
with $(\hat{u}_{\psi}^{s}-u_{\psi\circ\Phi_{s}}^{0})(-1,y)=(\hat{u}_{\psi}%
^{s}-u_{\psi\circ\Phi_{s}}^{0})(1,y)$ for all $y$ such that $(\pm
1,y)\in\overline{\Omega_{+}}$, where $B_{s}=A_{s}-I_{2\times2}$. Reasoning as
before, by (\ref{s4}) and (\ref{h1}) we get%
\begin{align*}
&  \left\Vert \nabla(\hat{u}_{\psi}^{s}-u_{\psi\circ\Phi_{s}}^{0})\right\Vert
_{L^{2}(\Omega_{+})}^{2}\leq\left\Vert B_{s}\nabla u_{\psi\circ\Phi_{s}}%
^{0}\right\Vert _{L^{2}(\Omega_{+})}\left\Vert \nabla(\hat{u}_{\psi}%
^{s}-u_{\psi\circ\Phi_{s}}^{0})\right\Vert _{L^{2}(\Omega_{+})}\\
&  \quad+\left\Vert (A_{s}\nabla(\hat{u}_{\psi}^{s}-u_{\psi\circ\Phi_{s}}%
^{0})+B_{s}\nabla u_{\psi\circ\Phi_{s}}^{0})\cdot\nu\right\Vert _{H^{-1/2}%
(\Gamma)}\left\Vert (\psi\circ\Phi_{s})(\partial_{\nu}u-\partial_{\nu_{s}%
}u_{s}\circ\Phi_{s})\right\Vert _{H^{1/2}(\Gamma)}\\
&  \leq C\left\Vert \varphi\right\Vert _{C^{2}([a,b])}\left\Vert
\psi\right\Vert _{H^{1/2}(\Gamma_{s})}\left\Vert \nabla(\hat{u}_{\psi}%
^{s}-u_{\psi\circ\Phi_{s}}^{0})\right\Vert _{L^{2}(\Omega_{+})}\\
&  \quad+C\left\Vert A_{s}\nabla(\hat{u}_{\psi}^{s}-u_{\psi\circ\Phi_{s}}%
^{0})+B_{s}\nabla u_{\psi\circ\Phi_{s}}^{0}\right\Vert _{L^{2}(\Omega_{+}%
)}\left\Vert (\psi\circ\Phi_{s})(\partial_{\nu}u-\partial_{\nu_{s}}u_{s}%
\circ\Phi_{s})\right\Vert _{H^{1/2}(\Gamma)}\\
&  \leq C\left\Vert \varphi\right\Vert _{C^{2}([a,b])}\left\Vert
\psi\right\Vert _{H^{1/2}(\Gamma_{s})}\left\Vert \nabla(\hat{u}_{\psi}%
^{s}-u_{\psi\circ\Phi_{s}}^{0})\right\Vert _{L^{2}(\Omega_{+})}\\
&  \quad+C\left\Vert \nabla(\hat{u}_{\psi}^{s}-u_{\psi\circ\Phi_{s}}%
^{0})\right\Vert _{L^{2}(\Omega_{+})}\left\Vert (\psi\circ\Phi_{s}%
)(\partial_{\nu}u-\partial_{\nu_{s}}u_{s}\circ\Phi_{s})\right\Vert
_{H^{1/2}(\Gamma)}\\
&  \quad+C\left\Vert \varphi\right\Vert _{C^{2}([a,b])}\left\Vert
\psi\right\Vert _{H^{1/2}(\Gamma_{s})}\left\Vert (\psi\circ\Phi_{s}%
)(\partial_{\nu}u-\partial_{\nu_{s}}u_{s}\circ\Phi_{s})\right\Vert
_{H^{1/2}(\Gamma)}.
\end{align*}
Hence,%
\begin{align*}
\left\Vert \nabla(\hat{u}_{\psi}^{s}-u_{\psi\circ\Phi_{s}}^{0})\right\Vert
_{L^{2}(\Omega_{+})}  &  \leq C\left\Vert \varphi\right\Vert _{C^{2}%
([a,b])}\left\Vert \psi\right\Vert _{H^{1/2}(\Gamma_{s})}\\
&  \quad+C\left\Vert (\psi\circ\Phi_{s})(\partial_{\nu}u-\partial_{\nu_{s}%
}u_{s}\circ\Phi_{s})\right\Vert _{H^{1/2}(\Gamma)}.
\end{align*}
As before, using (\ref{s11}), we obtain
\begin{align*}
&  \left\Vert (\psi\circ\Phi_{s})(\partial_{\nu}u-\partial_{\nu_{s}}u_{s}%
\circ\Phi_{s})\right\Vert _{H^{1/2}(\Gamma)}\leq C\left\Vert \psi\right\Vert
_{L^{2}(\Gamma_{s})}\left\Vert \partial_{\nu}u-\partial_{\nu_{s}}u_{s}%
\circ\Phi_{s}\right\Vert _{C^{0,1}(\Gamma)}\\
&  \quad+C\left\vert \psi\right\vert _{H^{1/2}(\Gamma_{s})}\left\Vert
\partial_{\nu}u-\partial_{\nu_{s}}u_{s}\circ\Phi_{s}\right\Vert _{C^{0}%
(\Gamma)}\leq C\left\Vert \varphi\right\Vert _{C^{2,\alpha}(a,b)}\left\Vert
\psi\right\Vert _{H^{1/2}(\Gamma_{s})},
\end{align*}
and so%
\begin{equation}
\left\Vert \nabla\hat{u}_{\psi}^{s}-\nabla u_{\psi\circ\Phi_{s}}%
^{0}\right\Vert _{L^{2}(\Omega_{+})}\leq C\left\Vert \varphi\right\Vert
_{C^{2,\alpha}(a,b)}\left\Vert \psi\right\Vert _{H^{1/2}(\Gamma_{s})}.
\label{h2}%
\end{equation}
Then, also by (\ref{h1}),%
\[
\Big|\int_{\Omega_{+}}|\nabla\hat{u}_{\psi}^{s}|^{2}\,d\boldsymbol{x}%
-\int_{\Omega_{+}}|\nabla u_{\psi\circ\Phi_{s}}^{0}|^{2}\,d\boldsymbol{x}%
\Big|\leq C\left\Vert \varphi\right\Vert _{C^{2,\alpha}(a,b)}\left\Vert
\psi\right\Vert _{H^{1/2}(\Gamma_{s})}^{2},
\]
By a change of variables, we get%
\[
\int_{\Omega_{+}}|\nabla\hat{u}_{\psi}^{s}|^{2}\,d\boldsymbol{x}=\int%
_{\Phi_{s}(\Omega_{+})}|(D\Phi_{s}\circ\Phi_{s}^{-1})\nabla u_{\psi}^{s}%
|^{2}\det D\Phi_{s}\,d\boldsymbol{y}%
\]
In turn, by (\ref{D phi s -I}) and (\ref{h1}), we deduce that%
\begin{align*}
\Big|\int_{\Phi_{s}(\Omega_{+})}|\nabla u_{\psi}^{s}|^{2}\,d\boldsymbol{x}%
-\int_{\Omega_{+}}|\nabla\hat{u}_{\psi}^{s}|^{2}\,d\boldsymbol{x}\Big|  &
\leq C\left\Vert \varphi\right\Vert _{C^{2}([a,b])}\int_{\Phi_{s}(\Omega_{+}%
)}|\nabla u_{\psi}^{s}|^{2}\,d\boldsymbol{x}\\
&  \leq C\left\Vert \varphi\right\Vert _{C^{2}([a,b])}\left\Vert
\psi\right\Vert _{H^{1/2}(\Gamma_{s})}^{2},
\end{align*}
and this concludes the proof.
\end{proof}

Finally, we estimate the first term in the last integral on the right-hand
side of (\ref{2var}).

\begin{proposition}
\label{proposition tangential zero}Let $Q\in C^{1,1}(\Omega)$, let $\varphi$ be as in Theorem~\ref{theorem second variation positive}, 
and let $u_{s}$ be the solution to
problem (\ref{equation u t}), where $\Phi_{s}$ is given by (\ref{phi s}). If
$\left\Vert \varphi\right\Vert _{C^{2,\alpha}(a,b)}$ is sufficiently small,
then for every $s\in\lbrack0,1]$ the following inequality holds:
\begin{equation}
\Big|\int_{\Gamma_{s}}(Q^{2}-|\nabla u_{s}|^{2})Z_{s}\cdot\nu_{s}%
\,d\mathcal{H}^{1}\Big|\leq C\left\Vert \varphi\right\Vert _{C^{2,\alpha
}(a,b)}\int_{\Gamma_{s}}(X_{s}\cdot\nu_{s})^{2}\,d\mathcal{H}^{1},
\label{condition (ii)}%
\end{equation}
where $X_{s}$ and $Z_{s}$ are given in (\ref{X and Z}) and $C>0$ depends on
$\left\Vert w\right\Vert _{C^{2,\alpha}(-1,1)}$.
\end{proposition}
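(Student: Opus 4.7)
The plan is to express $(Z_s\cdot\nu_s)\,d\mathcal{H}^1$ along $\Gamma_s$ as an exact $x$-derivative (modulo a harmless $O(\varphi^2)$ remainder), then integrate by parts to move that derivative onto the factor $F_s:=Q^2-|\nabla u_s|^2$, whose $C^1$ norm is $O(\|\varphi\|_{C^{2,\alpha}})$ by Proposition~\ref{proposition estimates us}. The two essential ingredients are the tangential-zero property $X_s\cdot\tau_s=0$ provided by Theorem~\ref{theorem diffeomorphism} and the boundary vanishing $\varphi(a)=\varphi(b)=0$.

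I would parameterize $\Gamma_s$ via $X\mapsto\phi(s,X):=\Phi_s(X,w(X))=(g(s,X),h(s,X))$. Solving (\ref{i equiv}) together with (\ref{partial h}) yields at $\phi(s,X)$
\[
\partial_s g=-\frac{N_s(g)\varphi(g)}{D_s(g)},\qquad \partial_s h=\frac{\varphi(g)}{D_s(g)},\qquad X_s\cdot\nu_s=\frac{\varphi(g)}{\sqrt{D_s(g)}},
\]
where $N_s:=w'+s\varphi'$ and $D_s:=1+N_s^2$. I would then extend $\nu_s$ to $\mathbb{R}^2$ by the $y$-independent unit vector field $\mathcal{N}(s,x,y):=(-N_s(x),1)/\sqrt{D_s(x)}$, which satisfies $\mathcal{N}\cdot\partial_s\mathcal{N}\equiv 0$ and $\mathcal{N}\cdot D_{(x,y)}\mathcal{N}\equiv 0$ pointwise by unitarity. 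Applying the product rule to $\partial_s[\partial_s\phi\cdot\mathcal{N}(s,\phi)]$ and using $\partial_s\phi=(X_s\cdot\nu_s)\,\mathcal{N}$, both cross terms vanish, giving the exact identity
\[
(Z_s\cdot\nu_s)(\phi)=\partial_s\!\left[\frac{\varphi(g)}{\sqrt{D_s(g)}}\right]=-(\varphi^2)'(g)\,\frac{N_s(g)}{D_s(g)^{3/2}}+\varphi(g)^2\,\frac{N_s^2\,N_s'(g)}{D_s(g)^{5/2}},
\]
where the prime denotes the $x$-derivative.

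Multiplying by $d\mathcal{H}^1=\sqrt{D_s(g)}\,\partial_X g\,dX$ (positive Jacobian for small $\|\varphi\|_{C^2}$ by Theorem~\ref{theorem norm of g and h}) and changing variables to $x=g(s,X)$, the left-hand side of (\ref{condition (ii)}) becomes
\[
-\int_a^b F_s(x)\,(\varphi^2)'(x)\,\frac{N_s}{D_s}\,dx+\int_a^b F_s(x)\,\varphi(x)^2\,\frac{N_s^2 N_s'}{D_s^2}\,dx.
\]
I would integrate the first integral by parts; since $\varphi(a)=\varphi(b)=0$, the boundary terms vanish, leaving $\int_a^b (F_s N_s/D_s)'\,\varphi^2\,dx$. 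Because $\|F_s\|_{C^1([-1,1])}\le C\|\varphi\|_{C^{2,\alpha}}$ by Proposition~\ref{proposition estimates us} and the coefficients $N_s/D_s$, $N_s^2 N_s'/D_s^2$ together with their $x$-derivatives are uniformly bounded in terms of $\|w\|_{C^{2,\alpha}(-1,1)}$, both integrals are controlled by $C\|\varphi\|_{C^{2,\alpha}}\int_a^b\varphi^2\,dx$. Since $\int_{\Gamma_s}(X_s\cdot\nu_s)^2\,d\mathcal{H}^1=\int_a^b\varphi^2/\sqrt{D_s}\,dx$ and $\sqrt{D_s}$ is bounded, a trivial change of variables shows $\int_a^b\varphi^2\,dx\le C\int_{\Gamma_s}(X_s\cdot\nu_s)^2\,d\mathcal{H}^1$, completing the bound.

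The main obstacle I anticipate is establishing the exact identity $(Z_s\cdot\nu_s)(\phi)=\partial_s[\varphi(g)/\sqrt{D_s(g)}]$ with no leftover first-order terms. This is precisely where the condition $X_s\cdot\tau_s=0$ is indispensable: only then can $\partial_s\phi$ be written as a scalar multiple of the unit vector field $\mathcal{N}$, after which unitarity forces the unwanted cross terms to cancel. Without this structural factorization, a naive pointwise bound on $Z_s\cdot\nu_s$ would only yield control of the weaker quantity $\|\varphi\|_{C^1}\int|\varphi|\,dx$, which is strictly insufficient for the quadratic estimate $\int\varphi^2\,dx$ required by (\ref{condition (ii)}).
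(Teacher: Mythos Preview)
Your proof is correct and, at the level of the key integrals, coincides with the paper's: after the change of variables $x=g(s,X)$ both arguments reduce to the same two integrals (your first integral is the paper's term $II$, your second is the paper's term $I$), and both finish with the identical integration by parts against $(\varphi^2)'$ followed by Proposition~\ref{proposition estimates us}. The one genuine difference is in how you obtain the formula for $Z_s\cdot\nu_s$ on $\Gamma_s$. The paper computes $\ddot\Phi_s\cdot(D\Phi_s)^{-T}\nu$ by direct matrix manipulation (equations~\eqref{600}--\eqref{phi ddot}), differentiating \eqref{partial h} once more to eliminate $\partial_s^2h$. You instead establish the clean identity $(Z_s\cdot\nu_s)\circ\phi=\partial_s\bigl[(X_s\cdot\nu_s)\circ\phi\bigr]$ by extending $\nu_s$ to the unit field $\mathcal N$ and exploiting that $\partial_s\phi\parallel\mathcal N$ (which is exactly \eqref{tangential part equal zero}) together with $\mathcal N\cdot\partial\mathcal N=0$. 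This is a more transparent route: it makes explicit, in a single line, why the tangential-zero property kills all first-order-in-$\varphi$ contributions to $Z_s\cdot\nu_s$ and leaves only the quadratic terms you can control. The paper's brute-force computation obtains the same end formula but obscures this structural reason.
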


\begin{proof}
Observe that, by (\ref{phi s}), (\ref{partial h}), and the fact that $\lambda(y)=1$ if $L+\delta
_{0}\leq y\leq M+2-\delta_{0}$, we have that%
\begin{align}
&  \dot{\Phi}_{s}(x,w(x))\cdot(D\Phi_{s}(x,w(x)))^{-T}\nu(x,w(x))\nonumber\\
&  =\dot{\Psi}_{s}(x,w(x))\cdot(D\Psi_{s}(x,w(x)))^{-T}\nu(x,w(x))\nonumber\\
&  =-\frac{\partial_{x}h(s,x)}{\partial_{x}g(s,x)}\frac{\partial_{s}%
g(s,x)}{\sqrt{1+(w^{\prime}(x))^{2}}}+\frac{\partial_{s}h(s,x)}{\sqrt
{1+(w^{\prime}(x))^{2}}}=\frac{\varphi(g(s,x))}{\sqrt{1+(w^{\prime}(x))^{2}}}
\label{phi dot}%
\end{align}
provided $\left\Vert \varphi\right\Vert _{C^{2,\alpha}(a,b)}$ is sufficiently
small. Similarly,%
\begin{align}
&  \ddot{\Phi}_{s}(x,w(x))\cdot(D\Phi_{s}(x,w(x)))^{-T}\nu(x,w(x))\nonumber\\
&  =\ddot{\Psi}_{s}(x,w(x))\cdot(D\Psi_{s}(x,w(x)))^{-T}\nu(x,w(x))\nonumber\\
&  =\frac{1}{\partial_{x}g(s,x)}\left(  \partial_{s}^{2}g(s,x),\partial
_{s}^{2}h(s,x)\right)  \cdot\left(
\begin{array}
[c]{cc}%
1 & -\partial_{x}h(s,x)+w^{\prime}(x)\\
0 & \partial_{x}g(s,x)
\end{array}
\right)  \frac{\left(  -w^{\prime}(x),1\right)  }{\sqrt{1+(w^{\prime}(x))^{2}%
}}\nonumber\\
&  =-\frac{\partial_{x}h(s,x)}{\partial_{x}g(s,x)}\frac{\partial_{s}%
^{2}g(s,x)}{\sqrt{1+(w^{\prime}(x))^{2}}}+\frac{\partial_{s}^{2}h(s,x)}%
{\sqrt{1+(w^{\prime}(x))^{2}}}. \label{600}%
\end{align}
Differentiating (\ref{partial h})$_{2}$ with respect to $s$ gives%
\[
\partial_{s}^{2}h=(w^{\prime\prime}(g)+s\varphi^{\prime\prime}(g))(\partial
_{s}g)^{2}+2\varphi^{\prime}(g)\partial_{s}g+(w^{\prime}(g)+s\varphi^{\prime
}(g))\partial_{s}^{2}g.
\]
so that, by (\ref{partial h})$_{1}$ and (\ref{600}),%
\begin{align}
&  \ddot{\Phi}_{s}(x,w(x))\cdot(D\Phi_{s}(x,w(x)))^{-T}\nu(x,w(x))\nonumber\\
&  =\frac{(w^{\prime\prime}(g)+s\varphi^{\prime\prime}(g))(\partial_{s}g)^{2}%
}{\sqrt{1+(w^{\prime}(x))^{2}}}+\frac{2\varphi^{\prime}(g)\partial_{s}g}%
{\sqrt{1+(w^{\prime}(x))^{2}}}. \label{phi ddot}%
\end{align}
Since, by (\ref{tangent and normal}), we have
\[
\nu_{s}(\Phi_{s}(x,w(x)))=\frac{(D\Phi_{s}(x,w(x)))^{-T}\nu(x,w(x))}%
{\Big((D\Phi_{s}(x,w(x)))^{-T}\nu(x,w(x))\Big)}%
\]
and
\[
\Big((D\Phi_{s}(x,w(x)))^{-T}\nu(x,w(x))\Big)=\frac{\sqrt{1+[w^{\prime
}(g(x))+s\varphi^{\prime}(g(x))]^{2}}}{\sqrt{1+(w^{\prime}(x))^{2}}},
\]
by (\ref{X and Z}) we have%
\begin{align*}
&  \int_{\Gamma_{s}}(Q^{2}-|\nabla u_{s}|^{2})Z_{s}\cdot\nu_{s}\,d\mathcal{H}%
^{1}\\
&  =\int_{a}^{b}(Q^{2}(g,h)-|\nabla u_{s}(g,h)|^{2})(w^{\prime\prime
}(g)+s\varphi^{\prime\prime}(g))(\partial_{s}g)^{2}\partial_{x}g~dx\\
&  \quad+\int_{a}^{b}(Q^{2}(g,h)-|\nabla u_{s}(g,h)|^{2})2\varphi^{\prime
}(g)\,\partial_{s}g\,\partial_{x}g~dx=:I+II.
\end{align*}
By (\ref{partial s of g (new)}) and (\ref{partial s of g at a}) in the
appendix we obtain
\begin{equation}
|\partial_{s}g(s,x)|\leq|\varphi(g(s,x))| \label{est1}%
\end{equation}
for every $(s,x)\in\lbrack0,1]\times\lbrack a,b]$. Hence, by Proposition~\ref{proposition estimates us}, for $\left\Vert \varphi\right\Vert
_{C^{2,\alpha}(a,b)}$ sufficiently small,
\begin{align*}
|I|  &  \leq C\left\Vert \varphi\right\Vert _{C^{2,\alpha}(a,b)}\int_{a}%
^{b}\varphi^{2}(g(s,x))\,\partial_{x}g~dx\\
&  \leq C\left\Vert \varphi\right\Vert _{C^{2,\alpha}(a,b)}\int_{a}^{b}%
\frac{\varphi^{2}(g(s,x))}{\sqrt{1+[w^{\prime}(g)+s\varphi^{\prime}(g)]^{2}}%
}\,\partial_{x}g~dx\\
&  =C\left\Vert \varphi\right\Vert _{C^{2,\alpha}(a,b)}\int_{\Gamma_{s}}%
(X_{s}\cdot\nu_{s})^{2}\,d\mathcal{H}^{1},
\end{align*}
where $C$ depends only on $\left\Vert w\right\Vert _{C^{2,\alpha}(a,b)}$, and where
we have used (\ref{phi dot}).

To estimate $II$, we use (\ref{partial s of g (new)}) to write%
\[
II=-\int_{a}^{b}(Q^{2}(g,h)-|\nabla u_{s}(g,h)|^{2})\frac{2\varphi^{\prime
}(g)\varphi(g)(w^{\prime}(g)+s\varphi^{\prime}(g))\partial_{x}g}{1+[w^{\prime
}(g)+s\varphi^{\prime}(g)]^{2}}~dx
\]
Using the change of variables $r=g(x,s)$ and (\ref{h}), we have
\[
II=-\int_{a}^{b}\frac{(Q^{2}(r,w+s\varphi)-|\nabla u_{s}(r,w+s\varphi
)|^{2})2\varphi^{\prime}\varphi(w^{\prime}+s\varphi^{\prime})}{1+[w^{\prime
}+s\varphi^{\prime}]^{2}}~dr.
\]
Integrating by parts and using (\ref{f and itz derivatives zero at endpoints}),
we obtain%
\[
II=\int_{a}^{b}\varphi^{2}\partial_{r}\Big(\frac{(Q^{2}(r,w+s\varphi)-|\nabla
u_{s}((r,w+s\varphi)|^{2})(w^{\prime}+s\varphi^{\prime})}{1+[w^{\prime
}+s\varphi^{\prime}]^{2}}\Big)~dr.
\]
It follows from Proposition~\ref{proposition estimates us} that
\[
|II|\leq C\left\Vert \varphi\right\Vert _{C^{2,\alpha}(a,b)}\int_{a}%
^{b}\varphi^{2}~dr\leq C\left\Vert \varphi\right\Vert _{C^{2,\alpha}(a,b)}%
\int_{\Gamma_{s}}(X_{s}\cdot\nu_{s})^{2}\,d\mathcal{H}^{1},
\]
where in the last inequality we have reasoned as in the estimate of $I$.
\end{proof}

Next we prove Theorem~\ref{theorem second variation positive}.\medskip

\begin{proof}
[Proof of Theorem~\ref{theorem second variation positive}]Let $\varphi$ be as
in the statement of Theorem~\ref{theorem second variation positive}, and let
$\{\Phi_{s}\}_{s\in\lbrack0,1]}$ be the admissible flow given in Theorem~\ref{theorem diffeomorphism}. By Theorem~\ref{thm:vars} and
Propositions~\ref{proposition estimates us}
and~\ref{proposition tangential zero},
\begin{align*}
\frac{d^{2}}{ds^{2}}\mathcal{F}(u_{s})\geq &  \int_{\Phi_{s}(\Omega_{+}%
)}2|\nabla\dot{u}_{s}|^{2}\,d\boldsymbol{x}+\int_{\Gamma_{s}}\big(\partial
_{\nu_{s}}Q^{2}+2\kappa_{s}(\partial_{\nu_{s}}u_{s})^{2}\big)(X_{s}\cdot
\nu_{s})^{2}\,d\mathcal{H}^{1}\\
&  \quad-C\left\Vert \varphi\right\Vert _{C^{2,\alpha}(a,b)}\int_{\Gamma_{s}%
}(X_{s}\cdot\nu_{s})^{2}\,d\mathcal{H}^{1},
\end{align*}
where we used the fact that $|\kappa_{s}|\leq C$. On the other hand, by
(\ref{udot eq}) and by Propositions~\ref{proposition estimate 1}
and~\ref{proposition estimate 2} with $\psi=X_{s}\cdot\nu_{s}$, we have
\begin{align*}
\frac{d^{2}}{ds^{2}}\mathcal{F}(u_{s})\geq &  \int_{\Omega_{+}}2|\nabla
u_{\psi_{s}}|^{2}\,d\boldsymbol{x}+\int_{\Gamma}\big(\partial_{\nu}%
Q^{2}+2\kappa(\partial_{\nu}u)^{2}\big)(X_{s}\cdot\nu_{s})^{2}\circ\Phi
_{s}\,d\mathcal{H}^{1}\\
&  \quad-C_{1}\left\Vert \varphi\right\Vert _{C^{2,\alpha}(a,b)}\left\Vert
X_{s}\cdot\nu_{s}\right\Vert _{H^{1/2}(\Gamma_{s})}^{2},
\end{align*}
where $\psi_{s}:=(X_{s}\cdot\nu_{s})\circ\Phi_{s}$ and $u_{\psi_{s}}$ is the
unique solution to the problem%
\[%
\begin{cases}
\Delta u_{\psi_{s}}=0 & \text{in }\Omega_{+},\\
u_{\psi_{s}}=-\psi_{s}\partial_{\nu}u & \text{on }\Gamma,\\
u_{\psi_{s}}=0 & \text{on }\{y=0\}\cap\partial\Omega_{+},%
\end{cases}
\]
with $u_{\psi_{s}}(-1,y)=u_{\psi_{s}}(1,y)$ for all $y$ such that $(\pm
1,y)\in\overline{\Omega_{+}}$. Now we apply
(\ref{u stat}) and (\ref{coercivity}) to obtain
\[
\frac{d^{2}}{ds^{2}}\mathcal{F}(u_{s})\geq(C_{0}-C_{1}\left\Vert
\varphi\right\Vert _{C^{2,\alpha}(a,b)})\left\Vert X_{s}\cdot\nu
_{s}\right\Vert _{H^{1/2}(\Gamma_{s})}^{2}.
\]
By taking $\left\Vert \varphi\right\Vert _{C^{2,\alpha}(a,b)}\leq
C_{0}/(2C_{1})$ we get $\frac{d^{2}}{ds^{2}}\mathcal{F}(u_{s})\geq0$ for all
$s\in\lbrack0,1]$. In turn, by~(\ref{u stat}),%
\begin{align*}
\mathcal{F}(u)  &  =\mathcal{F}(u_{1})-\int_{0}^{1}(1-s)\frac{d^{2}}{ds^{2}%
}\mathcal{F}(u_{s})~ds\\
&  \leq\mathcal{F}(u_{1})=\int_{\Phi_{1}(\Omega_{+})}\big(|\nabla u_{1}%
|^{2}+Q^{2}(\boldsymbol{x})\big)\,d\boldsymbol{x}.
\end{align*}
In view of (\ref{equation u t}), $u_{1}$ is the unique minimizer of
$\mathcal{F}$ over all functions $v\in H^{1}(\Phi_{1}(\Omega_{+}))$ such that
$v=0$ on $\Phi_{1}(\Gamma)$, $v=u$ on $\partial\Phi_{1}(\{y=0\}\cap
\partial\Omega_{+})$ and $v(-1,y)=v(1,y)$ for all $y$ such that $(\pm
1,y)\in\Phi_{1}(\overline{\Omega_{+}})$.
In particular, for every $v\in\mathcal{A}$ with $\{v>0\}=\Phi_{1}(\{u>0\})$,
we have%
\[
\mathcal{F}(u)\leq\mathcal{F}(v),
\]
which concludes the proof.
\end{proof}

We conclude this section with the proof of the main theorem.\medskip

\begin{proof}
[Proof of Theorem~\ref{theorem main}]Let $U\Subset\Omega$, $\delta>0$, and let
$\Phi\in C^{2,\alpha}(\mathbb{R}^{2};\mathbb{R}^{2})$ be a diffeomorphism
satisfying (\ref{m1}) and (\ref{m2}).\smallskip

\noindent\textbf{Step 1:} We begin by proving that there exist a constant
$C>0$ and an interval $[a,b]\subset(-1,1)$ (independent of $\Phi$) such that
the set $\Phi(\Gamma)$ is the graph of a function $w+\varphi$, where
$\varphi\in C^{2,\alpha}(-1,1)$ has compact support in $[a,b]$ and satisfies
\begin{equation}
\left\Vert \varphi\right\Vert _{C^{2,\alpha}(-1,1)}\leq C\delta. \label{m4}%
\end{equation}

Consider the function
\[
\psi(x):=\Phi^{1}(x,w(x)),\quad x\in\lbrack-1,1],
\]
where $\Phi=(\Phi^{1},\Phi^{2})$. By the chain rule, $\psi\in C^{2,\alpha
}(-1,1)$ with
\begin{align}
\psi^{\prime}(x)  &  =\partial_{x}\Phi^{1}(x,w(x))+w^{\prime}(x)\partial
_{y}\Phi^{1}(x,w(x))\label{m3}\\
&  \geq1-\delta-\delta\left\Vert w^{\prime}\right\Vert _{C^{0}(-1,1)}\geq
\frac{1}{2}\nonumber
\end{align}
for all $0<\delta<\frac{1}{2+2\left\Vert w^{\prime}\right\Vert _{C^{0}(-1,1)}%
}$, where we used the facts that $\partial_{x}\Phi_{1}(x,y)\geq1-\delta$ and
$\left\vert \partial_{y}\Phi_{1}(x,y)\right\vert \leq\delta$ by (\ref{m2}).
Moreover, by (\ref{m1}), $\psi(-1)=-1$ and $\psi(1)=1$. Hence, $\psi
:[-1,1]\rightarrow\lbrack-1,1]$ is invertible, and by the chain rule $\psi
^{-1}\in C^{2,\alpha}(-1,1)$. It follows that
\[
\Phi(\Gamma)=\{(x,\Phi^{2}(\psi^{-1}(x),w(\psi^{-1}(x)))):~x\in\lbrack
-1,1]\}.
\]
Define $\varphi(x):=\Phi^{2}(\psi^{-1}(x),w(\psi^{-1}(x)))-w(x)$. By
(\ref{m1}), $\psi(x)=x$ for $x$ in a neighborhood of $-1$ and of $1$,
$\Phi^{2}(x,y)=y$ for $(x,y)\notin U$. Hence, $\varphi$ has compact support in
$(-1,1)$. A lengthy, but straightforward calculation using (\ref{m2}), shows
that (\ref{m4}) holds.\smallskip

\noindent\textbf{Step 2:} Let now $\{\varphi_{n}\}_{n}$ be a sequence of
polynomials satisfying (\ref{f and itz derivatives zero at endpoints}) and
such that $\varphi_{n}\rightarrow\varphi$ in $C^{2,\alpha}(a,b)$. By
Theorems~\ref{theorem diffeomorphism}
and~\ref{theorem second variation positive}, for $\delta$ small enough we can
construct an admissible flow $\{\Phi_{s,n}\}_{s\in[0,1]}$ (see
Definition~\ref{definition admissible flow}) for every $n$ such that
\[
\Phi_{1,n}(\Gamma)=\left\{  (x,w(x)+\varphi_{n}(x)):\,x\in(-1,1)\right\}
\]
and
\begin{equation}
\mathcal{F}(u)\leq\mathcal{F}(v) \label{m6}%
\end{equation}
for every $v\in\mathcal{A}$ with $\{v>0\}=\Phi_{1,n}(\{u>0\})$.

Consider now a function $v\in\mathcal{A}$ with $\{v>0\}=\Phi(\{u>0\})$, and
define
\[
v_{n}(x,y):=v(x,y-\varphi_{n}(x)+\varphi(x)).
\]
Then $y<w(x)+\varphi_{n}(x)$ if and only if $y-\varphi_{n}(x)+\varphi
(x)<w(x)+\varphi(x)$. Let $\tau>0$. Since $\varphi_{n}\rightarrow\varphi$ in
$C^{2,\alpha}(a,b)$, we have that $v_{n}\rightarrow v$ in $H^{1}%
((-1,1)\times(\tau,\infty))$.

We now construct $\lambda_{\tau}\in C^{\infty}(\mathbb{R})$ such that
$0\leq\lambda_{\tau}\leq1$, $\lambda_{\tau}(y)=1$ if $2\tau\leq y$,
$\lambda_{\tau}(y)=0$ if $y\leq\tau$ and $\left\vert \lambda_{\tau}^{\prime
}(y)\right\vert \leq2/\tau$ for all $y\in\mathbb{R}$. Define
\[
v_{n,\tau}(x,y):=\lambda_{\tau}(y)v_{n}(x,y)+(1-\lambda_{\tau}(y))u(x,y).
\]
Since $\Phi_{1,n}$ satisfies (\ref{m1}), we have that $v_{n,\tau}%
\in\mathcal{A}$ and $\{v_{n,\tau}>0\}=\Phi_{1,n}(\{u>0\})$. Hence, by
(\ref{m6}), we have
\begin{align}
\mathcal{F}(u)  &  \leq\mathcal{F}(v_{n,\tau})=\int_{\Phi_{1,n}(\Omega_{+}%
)}\big(|\nabla v_{n,\tau}|^{2}+Q^{2}(\boldsymbol{x})\big)\,d\boldsymbol{x}%
\nonumber\\
&  =\int_{\Omega}\big(|\nabla v_{n,\tau}|^{2}+\chi_{\{v_{n,\tau}>0\}}%
Q^{2}(\boldsymbol{x})\big)\,d\boldsymbol{x}. \label{m6a}%
\end{align}
Since $\varphi_{n}\rightarrow\varphi$ in $C^{2,\alpha}(a,b)$, if
$(x,y)\in\Omega$ is such that $y\neq w(x)+\varphi(x)$ then for all $n$
sufficiently large $y\neq w(x)+\varphi_{n}(x)$, and so $\chi_{\{v_{n,\tau}%
>0\}}(x,y)=\chi_{\{v>0\}}(x,y)$. It follows by the Lebesgue dominated
convergence theorem that
\begin{equation}
\lim_{n\rightarrow\infty}\int_{\Omega}\chi_{\{v_{n,\tau}>0\}}Q^{2}%
(\boldsymbol{x})\,d\boldsymbol{x}=\int_{\Omega}\chi_{\{v>0\}}Q^{2}%
(\boldsymbol{x})\,d\boldsymbol{x}. \label{m6b}%
\end{equation}
On the other hand,
\[
\nabla v_{n,\tau}=\lambda_{\tau}\nabla v_{n}+(1-\lambda_{\tau})\nabla
u+(v_{n}-u)\lambda_{\tau}^{\prime}e_{2}.
\]
Hence, using convexity and the inequality $(a+b)^{2}\leq(1+\varepsilon)a^{2}+C_{\varepsilon
}b^{2}$, we obtain%
\begin{align*}
\int_{\Omega}|\nabla v_{n,\tau}|^{2}\,d\boldsymbol{x}  &  \leq(1+\varepsilon
)\int_{\Omega}\lambda_{\tau}|\nabla v_{n}|^{2}\,d\boldsymbol{x}+(1+\varepsilon
)\int_{\Omega}(1-\lambda_{\tau})|\nabla u|^{2}\,d\boldsymbol{x}\\
&  \quad+\frac{4C_{\varepsilon}}{\tau^{2}}\int_{(-1,1)\times(\tau,2\tau
)}|v_{n}-u|^{2}\,d\boldsymbol{x}.
\end{align*}
Since $v_{n}\rightarrow v$ in $H^{1}((-1,1)\times(\tau,\infty))$, letting
$n\rightarrow\infty$ we have that%
\begin{align}
\limsup_{n\rightarrow\infty}\int_{\Omega}|\nabla v_{n,\tau}|^{2}%
\,d\boldsymbol{x}  &  \leq(1+\varepsilon)\int_{\Omega}\lambda_{\tau}|\nabla
v|^{2}\,d\boldsymbol{x}+(1+\varepsilon)\int_{\Omega}(1-\lambda_{\tau})|\nabla
u|^{2}\,d\boldsymbol{x}\nonumber\\
&  \quad+\frac{4C_{\varepsilon}}{\tau^{2}}\int_{(-1,1)\times(\tau,2\tau
)}|v-u|^{2}\,d\boldsymbol{x}. \label{m7}%
\end{align}
By (\ref{class admissible}), if $v$ is of class $C^{1}$, it holds%
\[
v(x,y)-u(x,y)=\int_{0}^{y}(\partial_{y}v(x,r)-\partial_{y}u(x,r))~dr,
\]
and so by H\"{o}lder's inequality
\begin{align*}
\int_{(-1,1)\times(\tau,2\tau)}|v-u|^{2}\,d\boldsymbol{x}  &  \leq
\int_{(-1,1)\times(\tau,2\tau)}\Big(\int_{0}^{y}|\partial_{y}v(x,r)|+|\partial
_{y}u(x,r)|~dr\Big)^{2}\,d\boldsymbol{x}\\
&  \leq\int_{(-1,1)\times(\tau,2\tau)}\int_{0}^{y}y((\partial_{y}%
v(x,r))^{2}+(\partial_{y}u(x,r))^{2})~dr\,d\boldsymbol{x}\\
&  \leq4\tau^{2}\int_{(-1,1)\times(0,2\tau)}(\partial_{y}v(x,r))^{2}%
+(\partial_{y}u(x,r))^{2})~dx\,dr.
\end{align*}
By density, the same inequality is satisfied without any extra regularity on $v$.

We now combine (\ref{m6a})--(\ref{m7}) with the previous inequality. By first
letting $\tau\rightarrow0^{+}$ and then $\varepsilon\rightarrow0^{+}$, we
conclude that $\mathcal{F}(u)\leq\mathcal{F}(v)$.
\end{proof}

\section{Proof of Theorem~\ref{thm stability}}

\label{section proof stability}

The proof of Theorem~\ref{thm stability} is based on some auxiliary lemmas. We
start by showing that the first term in the expression (\ref{coercivity}) of
the second variation is coercive with respect to the $H^{1/2}$ norm of the
boundary datum on $\Gamma$.

\begin{lemma}
\label{lm1}Let $Q$, $u$, and $\Gamma$ be as in Theorem~\ref{thm stability},
let $U\subset\Omega$ be an open set such that $U\cap\Gamma\neq\emptyset$, and
let $A:=\{u>0\}\cap U$. Assume that $A$ has a Lipschitz boundary. Then there
exist two constants $C_{1},C_{2}>0$, depending on $A$, such that
\begin{equation}
C_{1}\Vert\hat{\psi}\Vert_{H^{1/2}(\partial A)}^{2}\leq\inf\Big\{\int%
_{A}|\nabla v|^{2}\,d\boldsymbol{x}:\ v\in H^{1}(A),\ v=\hat{\psi}\text{ on
}\partial A\Big\}\leq C_{2}\Vert\hat{\psi}\Vert_{H^{1/2}(\partial A)}^{2}
\label{trace ineq}%
\end{equation}
for every $\psi\in C_{c}^{1}(\Gamma\cap U)$, where
\[
\hat{\psi}:=%
\begin{cases}
Q\,\psi & \text{in }\Gamma\cap U,\\
0 & \text{in }\partial U\cap\{u>0\}.
\end{cases}
\]

\end{lemma}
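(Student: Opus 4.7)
The plan is to recognize the infimum on the right-hand side of \eqref{trace ineq} as the Dirichlet energy of the harmonic extension of $\hat\psi$ to $A$, and then to bound $\|\hat\psi\|_{H^{1/2}(\partial A)}$ against $\|\nabla v\|_{L^2(A)}$ using the trace theorem on the Lipschitz domain $A$ together with a Poincaré-type inequality that exploits the vanishing of $\hat\psi$ on $\partial A\setminus\overline{\Gamma}$.

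For the upper bound I would appeal to the standard existence of a bounded linear right inverse of the trace, namely an extension operator $E:H^{1/2}(\partial A)\to H^{1}(A)$ whose norm depends only on the Lipschitz character of $\partial A$. Testing the infimum with $v:=E(\hat\psi)$ gives $\int_{A}|\nabla v|^{2}\,d\boldsymbol{x}\le \|v\|_{H^{1}(A)}^{2}\le C_{2}\,\|\hat\psi\|_{H^{1/2}(\partial A)}^{2}$, which is the right-hand inequality.

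For the lower bound I would fix any competitor $v\in H^{1}(A)$ with trace $\hat\psi$ on $\partial A$ and consider the relatively open subset $\Sigma:=\partial U\cap\{u>0\}\subset\partial A$, on which $\hat\psi$ vanishes by definition. Since $\Sigma$ has positive $\mathcal{H}^{1}$-measure (see the last paragraph), the Poincaré inequality for Lipschitz domains with a boundary portion on which the function vanishes gives $\|v\|_{L^{2}(A)}\le C\|\nabla v\|_{L^{2}(A)}$, hence $\|v\|_{H^{1}(A)}\le C'\|\nabla v\|_{L^{2}(A)}$. The continuity of the trace $H^{1}(A)\to H^{1/2}(\partial A)$ then yields $\|\hat\psi\|_{H^{1/2}(\partial A)}\le C''\|v\|_{H^{1}(A)}\le C'''\|\nabla v\|_{L^{2}(A)}$, and the left-hand inequality of \eqref{trace ineq} follows upon passing to the infimum in $v$.

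The main obstacle will be justifying that $\mathcal{H}^{1}(\Sigma)>0$ so that the Poincaré inequality is applicable. Because $U$ is open and $U\cap\Gamma\ne\emptyset$, the open set $U$ must also meet the open set $\{u>0\}$; since $A=\{u>0\}\cap U$ is prescribed to have Lipschitz boundary and is properly contained in neither $U$ nor $\{u>0\}$ (as $\Gamma\cap U\ne\emptyset$), the portion of $\partial A$ coming from $\partial U$ must contain a nontrivial arc lying in $\{u>0\}$, which is precisely $\Sigma$. Once this is in hand, the two constants $C_{1},C_{2}$ in \eqref{trace ineq} depend on $A$ only through its Lipschitz character and through the measure of $\Sigma$, as claimed.
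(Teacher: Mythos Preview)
Your proposal is correct and follows essentially the same route as the paper: the upper bound via a bounded extension operator $H^{1/2}(\partial A)\to H^{1}(A)$, and the lower bound via the continuity of the trace combined with a Poincar\'e inequality that uses the vanishing of $\hat\psi$ on $\partial U\cap\{u>0\}$. You are in fact slightly more careful than the paper in justifying that $\Sigma=\partial U\cap\{u>0\}$ has positive $\mathcal{H}^1$-measure; the only point the paper adds that you leave implicit is the remark that $\hat\psi\in H^{1/2}(\partial A)$, which follows from $Q^{2}\in C^{0,1}$ and $Q\geq Q_{\min}>0$.
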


\begin{proof}
Let $\psi\in C_{c}^{1}(\Gamma\cap U)$. Since $Q^{2}\in C^{0,1}$ and $Q\geq
Q_{\mathrm{min}}>0$, we have that $\hat{\psi}\in H^{1/2}(\partial A)$, and so
there exists $v^{\ast}\in H^{1}(A)$ such that $v^{\ast}=\hat{\psi}$ on
$\partial A$ in the sense of traces and
\[
\Vert v^{\ast}\Vert_{H^{1}(A)}^{2}\leq C_{2}\Vert\hat{\psi}\Vert
_{H^{1/2}(\partial A)}^{2},
\]
where $C_{2}$ is a positive constant depending on $A$. Thus, the second
inequality in (\ref{trace ineq}) holds. On the other hand, the trace operator
$T:H^{1}(A)\rightarrow H^{1/2}(\partial A)$ is continuous, and so there exists
a positive constant $\hat{C}_{1}$, depending on $A$, such that
\[
\Vert T(v)\Vert_{H^{1/2}(\partial A)}^{2}\leq\hat{C}_{1}\Vert v\Vert
_{H^{1}(A)}^{2}%
\]
for every $v\in H^{1}(A)$. In particular, given $\psi\in C_{c}^{1}(\Gamma\cap
U)$, we have that
\[
\Vert\hat{\psi}\Vert_{H^{1/2}(\partial A)}^{2}\leq\hat{C}_{1}\Vert
v\Vert_{H^{1}(A)}^{2}%
\]
for every $v\in H^{1}(A)$ with $T(v)=\hat{\psi}$. Since $\hat{\psi}=0$ in
$\partial U\cap\{u>0\}$, by Poincar\'{e}'s inequality
\[
\Vert v\Vert_{H^{1}(A)}^{2}\leq\tilde{C}_{1}\Vert\nabla v\Vert_{L^{2}(A)}^{2}%
\]
for every $v\in H^{1}(A)$ with $T(v)=\hat{\psi}$. Combining these two last
inequalities, we get the first inequality in (\ref{trace ineq}).
\end{proof}

\begin{lemma}
\label{lm2}Let $Q$, $u$, and $\Gamma$ be as in Theorem~\ref{thm stability}.
For every $\varepsilon>0$ let $U_{\varepsilon}$ be the intersection of
$\Omega$ with the $\varepsilon$-tubular neighborhood of $\Gamma$. Define
\[
\mu_{\varepsilon}:=\inf\Big\{\int_{U_{\varepsilon}\cap\{u>0\}}|\nabla u_{\psi
}|^{2}\,d\boldsymbol{x}:\ \psi\in C_{c}^{1}(\Gamma),\ \Vert\psi\Vert
_{L^{2}(\Gamma)}=1\Big\},
\]
where for every $\psi\in C_{c}^{1}(\Gamma)$ the function $u_{\psi}$ is the
solution to
\[%
\begin{cases}
\Delta u_{\psi}=0 & \text{in }U_{\varepsilon}\cap\{u>0\},\\
u_{\psi}=Q\psi & \text{on }\Gamma,\\
u_{\psi}=0 & \text{on }\partial U_{\varepsilon}\cap\{u>0\},
\end{cases}
\]
with $u_{\psi}(-1,y)=u_{\psi}(1,y)$ for all $y$ such that $(\pm1,y)\in
\overline{U_{\varepsilon}\cap\{u>0\}}$. Then
\[
\lim_{\varepsilon\rightarrow0^{+}}\mu_{\varepsilon}=\infty.
\]

\end{lemma}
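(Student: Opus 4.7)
The plan is to exploit the fact that in a tubular strip around $\Gamma$ of normal width $\varepsilon$, a harmonic function must jump from $Q\psi$ on $\Gamma$ to $0$ on the opposite boundary, and this forces its Dirichlet energy to be of order $\varepsilon^{-1}\|Q\psi\|_{L^2(\Gamma)}^2$. I will carry this out via a one-dimensional Poincar\'{e} estimate along the normals to $\Gamma$, after a tubular change of coordinates. First I would fix $\varepsilon_0>0$ small enough that the normal map $\Phi(\boldsymbol{x}_0,t):=\boldsymbol{x}_0-t\nu(\boldsymbol{x}_0)$ is a $C^2$ diffeomorphism from $\Gamma\times(-2\varepsilon_0,2\varepsilon_0)$ onto its image, which lies in $\Omega$ and stays disjoint from $\{y=0\}$; the orientation of $\nu$ is chosen so that $t>0$ corresponds to $\{u>0\}$. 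Then for every $\varepsilon\in(0,\varepsilon_0]$, after the periodic identification of $x=-1$ with $x=1$, one has $U_\varepsilon\cap\{u>0\}=\Phi(\Gamma\times(0,\varepsilon))$ with area Jacobian $|1-t\kappa(\boldsymbol{x}_0)|$, and $\partial U_\varepsilon\cap\{u>0\}=\Phi(\Gamma\times\{\varepsilon\})$.

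Next, for $\psi\in C_c^1(\Gamma)$ with $\|\psi\|_{L^2(\Gamma)}=1$, I would work fiber by fiber in the normal variable. For $\mathcal{H}^1$-a.e.\ $\boldsymbol{x}_0\in\Gamma$, the restriction $f_{\boldsymbol{x}_0}(t):=u_\psi(\Phi(\boldsymbol{x}_0,t))$ belongs to $H^1(0,\varepsilon)$ by a standard Fubini-type argument for Sobolev functions, and its traces satisfy $f_{\boldsymbol{x}_0}(0)=Q(\boldsymbol{x}_0)\psi(\boldsymbol{x}_0)$ and $f_{\boldsymbol{x}_0}(\varepsilon)=0$. The fundamental theorem of calculus and Cauchy--Schwarz then give
\[
Q^2(\boldsymbol{x}_0)\psi^2(\boldsymbol{x}_0)=|f_{\boldsymbol{x}_0}(\varepsilon)-f_{\boldsymbol{x}_0}(0)|^2\leq\varepsilon\int_0^\varepsilon|\nabla u_\psi(\Phi(\boldsymbol{x}_0,t))|^2\,dt.
\]
Integrating over $\Gamma$ and using the change of variables formula, with $1-t\kappa(\boldsymbol{x}_0)\geq 1-\varepsilon\|\kappa\|_\infty$ on $t\in(0,\varepsilon)$, I obtain
\[
\int_\Gamma Q^2\psi^2\,d\mathcal{H}^1\leq\frac{\varepsilon}{1-\varepsilon\|\kappa\|_\infty}\int_{U_\varepsilon\cap\{u>0\}}|\nabla u_\psi|^2\,d\boldsymbol{x}.
\]
Combining this with $Q\geq Q_{\min}>0$ and the normalization of $\psi$ yields
\[
\int_{U_\varepsilon\cap\{u>0\}}|\nabla u_\psi|^2\,d\boldsymbol{x}\geq\frac{Q_{\min}^2\,(1-\varepsilon\|\kappa\|_\infty)}{\varepsilon},
\]
which blows up as $\varepsilon\to 0^+$ and therefore forces $\mu_\varepsilon\to\infty$.

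The main obstacle I expect is the geometric bookkeeping in the first paragraph: verifying that for all sufficiently small $\varepsilon$ the tubular neighborhood $U_\varepsilon$ is genuinely foliated by normal segments, that the portion inside $\{u>0\}$ is exactly $\Phi(\Gamma\times(0,\varepsilon))$, and that its boundary inside $\{u>0\}$ reduces to $\Phi(\Gamma\times\{\varepsilon\})$ modulo the prescribed periodic identification at $x=\pm 1$. This requires choosing $\varepsilon<\min\{\varepsilon_0,\,1/\|\kappa\|_\infty,\,\min w\}$ so that $\Phi$ is a diffeomorphism and $U_\varepsilon$ stays away from $\{y=0\}$, together with the observation that the periodicity imposed on $u_\psi$ matches the periodic extension of $\Gamma$ so that the lateral cuts at $x=\pm 1$ are harmless. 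Once the geometric setup is in place, the rest is a routine Poincar\'{e}-type estimate in a thin strip.
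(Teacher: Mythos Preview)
Your argument is correct and gives a genuinely different proof from the paper's. The paper argues by contradiction and compactness: assuming $\mu_{\varepsilon_n}$ stays bounded along some sequence $\varepsilon_n\to 0$, it extends the corresponding $u_{\psi_n}$ by zero to the fixed domain $U_1\cap\{u>0\}$, extracts a weak $H^1$ limit, observes that this limit must vanish (since the supports shrink to $\Gamma$), and then derives a contradiction from the compactness of the trace operator on $\Gamma$, which forces $\|\psi_n\|_{L^2(\Gamma)}\to 0$. Your approach is instead direct and quantitative: a one-dimensional Poincar\'e inequality along normal fibers yields the explicit lower bound $\mu_\varepsilon\geq Q_{\min}^2(1-\varepsilon\|\kappa\|_\infty)/\varepsilon$. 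This buys you a precise blow-up rate $\mu_\varepsilon\sim\varepsilon^{-1}$, which the paper's soft argument does not provide, at the modest cost of setting up the tubular coordinates carefully (the geometric bookkeeping you flag is routine once $\varepsilon$ is taken small enough). Both proofs use $Q\geq Q_{\min}>0$ in the same way, to pass from $\|Q\psi\|_{L^2(\Gamma)}$ back to $\|\psi\|_{L^2(\Gamma)}$.
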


\begin{proof}
Assume, by contradiction, that there exist $C>0$, $\varepsilon_{n}%
\rightarrow0^{+}$, and $\psi_{n}\in C_{c}^{1}(\Gamma)$ with $\Vert\psi
_{n}\Vert_{L^{2}(\Gamma)}=1$, such that
\[
\int_{U_{\varepsilon_{n}}\cap\{u>0\}}|\nabla u_{n}|^{2}\,d\boldsymbol{x}\leq
C\quad\text{for every }n,
\]
where $u_{n}:=u_{\psi_{n}}$. We extend $u_{n}$ by $0$ to the set $U_{1}%
\cap\{u>0\}=:V$. Then
\[
\int_{V}|\nabla u_{n}|^{2}\,d\boldsymbol{x}\leq C\quad\text{for every }n.
\]
By Poincar\'{e}'s inequality there exists $u_{\infty}\in H^{1}(V)$ such that
$u_{n}\rightharpoonup u_{\infty}$ weakly in $H^{1}(V)$, up to a subsequence,
not relabeled. This implies that $u_{n}\rightarrow u_{\infty}$ strongly in
$L^{2}(\partial V)$. Since $\mathcal{L}^{2}(U_{\varepsilon_{n}})\rightarrow0$,
we have that $u_{n}\rightarrow0$ a.e.\ in $V$, hence $u_{\infty}=0$ and
$u_{n}\rightarrow0$ strongly in $L^{2}(\partial V)$.

On the other hand,
\[
1=\Vert\psi_{n}\Vert_{L^{2}(\Gamma)}\leq C\big\|Q^{-1/2}\big\|_{C^{0}}\Vert
u_{n}\Vert_{L^{2}(\Gamma)}.
\]
Since the right-hand side tends to $0$, we arrive at a contradiction.
\end{proof}

We now prove Theorem~\ref{thm stability}.\medskip

\begin{proof}
[Proof of Theorem~\ref{thm stability}]By Lemmas~\ref{lm1} and \ref{lm2}, for
$\varepsilon>0$ small enough we have%
\[
\int_{U_{\varepsilon}\cap\{u>0\}}2|\nabla u_{\psi}|^{2}\,d\boldsymbol{x}%
+\int_{\Gamma}(\partial_{\nu}Q^{2}+2\kappa Q^{2})\psi^{2}\,d\mathcal{H}%
^{1}\geq C_{\varepsilon}|\hat{\psi}|_{H^{1/2}(\Gamma)}^{2}+(\mu_{\varepsilon
}-C_{3})\Vert\psi\Vert_{L^{2}(\Gamma)}^{2},
\]
where $C_{3}:=(1+\Vert\kappa\Vert_{C^{0}})\Vert Q^{2}\Vert_{C^{0,1}}$. On the
other hand,
\begin{align*}
|\psi|_{H^{1/2}(\Gamma)}  &  =|\hat{\psi}/Q|_{H^{1/2}(\Gamma)}\leq\Vert
\hat{\psi}\Vert_{L^{2}(\Gamma)}|1/Q|_{C^{0,1}(\Gamma)}+|\hat{\psi}%
|_{H^{1/2}(\Gamma)}\Vert1/Q\Vert_{C^{0}(\Gamma)}\\
&  \leq\Vert\psi\Vert_{L^{2}(\Gamma)}\Vert Q\Vert_{C^{0}(\Gamma)}%
|1/Q|_{C^{0,1}(\Gamma)}+|\hat{\psi}|_{H^{1/2}(\Gamma)}/Q_{\mathrm{min}},
\end{align*}
and so
\[
|\hat{\psi}|_{H^{1/2}(\Gamma)}^{2}\geq\frac{1}{2}Q_{\mathrm{min}}^{2}%
|\psi|_{H^{1/2}(\Gamma)}^{2}-C_{4}\Vert\psi\Vert_{L^{2}(\Gamma)}^{2}.
\]
Hence,
\begin{align*}
\int_{U_{\varepsilon}\cap\{u>0\}}  &  2|\nabla u_{\psi}|^{2}\,d\boldsymbol{x}%
+\int_{\Gamma}(\partial_{\nu}Q^{2}+2\kappa Q^{2})\psi^{2}\,d\mathcal{H}^{1}\\
&  \geq\frac{C_{\varepsilon}}{2}Q_{\mathrm{min}}^{2}|\psi|_{H^{1/2}(\Gamma
)}^{2}+(\mu_{\varepsilon}-C_{3}-\min\{C_{\varepsilon}/2,1\}C_{4})\Vert
\psi\Vert_{L^{2}(\Gamma)}^{2}.
\end{align*}
Since $\mu_{\varepsilon}\rightarrow\infty$ by Lemma~\ref{lm2}, the inequality
(\ref{coerc Ue}) holds.

The second part of the statement follows from (\ref{coerc Ue}) by repeating
the proof of Theorem~\ref{theorem main}. We omit the details.
\end{proof}

\section*{Appendix}
Here we sketch the proof of the derivation of the  Euler-Lagrange equations \eqref{E-L} of (\ref{functional F}). Let $v\in\mathcal{A}_{0}\cap C^2(\overline{\Omega})$ be such that $\partial\Omega_+\cap\Omega$ (see \eqref{Omega plus}) is a manifold of class $C^2$,  $\mathcal{F}(v)\in\mathbb{R}$ and \eqref{critical} holds. Since $\Omega_{+}$ is open, consider variations $\varphi\in C^\infty_c(\Omega_{+})$. For $\varepsilon>0$ sufficiently small it can be shown that $\{v+\varepsilon\varphi>0\}=\{v>0\}$, therefore from \eqref{critical} we obtain
\begin{align*}
0=\frac{d}{d\varepsilon}\int_{\Omega}\vert\nabla (v+\varepsilon\varphi)\vert^2\,d\boldsymbol{x}\Big{\vert}_{\varepsilon=0}
=2\int_{\Omega_+}\nabla v\cdot\nabla\varphi\,d\boldsymbol{x}.
\end{align*}
This gives \eqref{E-L}$_1$, and the condition $v=0$ on $\Omega\cap
\partial\left\{  v>0\right\}$ follows from the continuity of $v$. To prove that $\left\vert \nabla v\right\vert =Q$ on $\Omega\cap
\partial\left\{  v>0\right\}$ we use Theorem 2.5 in \cite{alt-caffarelli81} to obtain 
\begin{equation}\label{boundary AC}
\lim_{\varepsilon\rightarrow0^{+}}\int_{\partial\{v>\varepsilon\}}(|\nabla v|^2-Q^2)\eta\cdot\nu \,d\mathcal{H}^1=0
\end{equation}
for every $\eta\in C^\infty_c(\Omega;\mathbb{R}^2)$. Note that in the original proof of \eqref{boundary AC}, $v$ was assumed to be a local minimizer, but this property was used only to guarantee the validity of   \eqref{critical}.
In view of the smoothness of $v$ and $\partial\Omega_+\cap\Omega$, for $\varepsilon$ sufficiently small $\partial\{v>\varepsilon\}$ is a smooth manifold of class $C^2$, and using a partition of unity, it can be shown that \eqref{boundary AC} reduces to
\begin{equation}\label{AC2}
\int_{\partial\{v>0\}}(|\nabla v|^2-Q^2)\eta\cdot\nu \,d\mathcal{H}^1=0.
\end{equation}
Extend locally the outward unit normal $\nu$ to $\partial\{v>0\}$ as a $C^1$ function $\bar{\nu}$ in an open neighborhood of $\partial\{v>0\}$, and take $\eta:=\varphi\bar{\nu}$, where $\varphi \in C^\infty_c(\Omega)$ is supported in that neighborhood. Then \eqref{AC2} yields
\begin{equation*}
\int_{\partial\{v>0\}}(|\nabla v|^2-Q^2)\varphi \,d\mathcal{H}^1=0.
\end{equation*}
By the arbitrariness of $\varphi$ we deduce that $\left\vert \nabla v\right\vert =Q$ on $\Omega\cap
\partial\left\{  v>0\right\}$.

\bigskip

The remaining of the Appendix is dedicated to the proof of Theorem~\ref{theorem regularity g}.\medskip

\begin{proof}
[Proof of Theorem~\ref{theorem regularity g}]Let
\[
D:=\left\{  x\in\mathbb{R}:\,\varphi(x)\neq0\right\}  .
\]

\noindent\textbf{Step 1:} \textbf{Regularity at points }$s\geq0$, $x\in
D$\textbf{.} By Theorem~\ref{theorem existence t0} the function $t_{0}$ is of
class $C^{\infty}$ in $[0,1]\times D$. Hence, by (\ref{g}) and the smooth
dependence of $\xi$ with respect to initial data, we have that $g$ is of class
$C^{\infty}$ in $[0,1]\times D$. Taking $t=t_{0}(s,x)$ in
(\ref{conservation law}) gives%
\[
\int_{0}^{t_{0}(s,x)}\left[  \frac{\left(  w^{\prime}(\xi(r,x))\varphi
(\xi(r,x))+\left(  \eta(r,x)-w(\xi(r,x))\right)  \varphi^{\prime}%
(\xi(r,x))\right)  ^{2}}{\varphi^{2}(\xi(r,x))}+1\right]  \,dr=s
\]
for all $(s,x)\in[0,1]\times D$. Differentiating with respect to $s$, and using
(\ref{eta at time t0}), yields%
\begin{equation}
\partial_{s}t_{0}(s,x)=\frac{1}{1+\left[  w^{\prime}(\xi(t_{0}%
(s,x),x))+s\varphi^{\prime}(\xi(t_{0}(s,x),x))\right]  ^{2}}
\label{partial s of t0}%
\end{equation}
for all $(s,x)\in[0,1]\times D$. Since $t_{0}(0,x)=0$, it follows upon
integration and by (\ref{g}) that%
\begin{equation}
t_{0}(s,x)=\int_{0}^{s}\frac{1}{1+\left[  w^{\prime}(g(r,x))+r\varphi^{\prime
}(g(r,x))\right]  ^{2}}\,dr. \label{formula t0}%
\end{equation}

By (\ref{IVP}), (\ref{g}), (\ref{eta at time t0}), and (\ref{partial s of t0}%
),%
\begin{align}
\partial_{s}g(s,x)  &  =\partial_{t}\xi(t_{0}(s,x),x)\partial_{s}%
t_{0}(s,x)\nonumber\\
&  =\frac{-w^{\prime}(g(s,x))\varphi(g(s,x))-\left(  \eta(t_{0}%
(s,x),x)-w(g(s,x))\right)  \varphi^{\prime}(g(s,x))}{1+\left[  w^{\prime
}(g(s,x))+s\varphi^{\prime}(g(s,x))\right]  ^{2}} \label{partial s of g (new)}%
\\
&  =-\frac{\varphi(g(s,x))\left[  w^{\prime}(g(s,x))+s\varphi^{\prime
}(g(s,x))\right]  }{1+\left[  w^{\prime}(g(s,x))+s\varphi^{\prime
}(g(s,x))\right]  ^{2}}.\nonumber
\end{align}
Differentiating (\ref{partial s of g (new)}) with respect to $s$ and $x$,
respectively, gives
\begin{align}
\partial_{s}^{2}g  &  =-\frac{\varphi^{\prime}(g)\left[  w^{\prime
}(g)+s\varphi^{\prime}(g)\right]  \partial_{s}g+\varphi(g)\left[
w^{\prime\prime}(g)+s\varphi^{\prime\prime}(g)\right]  \partial_{s}%
g+\varphi(g)\varphi^{\prime}(g)}{1+\left[  w^{\prime}(g)+s\varphi^{\prime
}(g)\right]  ^{2}}\nonumber\\
&  \quad{}+\frac{2\varphi(g)\left(  w^{\prime}(g)+s\varphi^{\prime}(g)\right)
^{2}\left\{  \left(  w^{\prime\prime}(g)+s\varphi^{\prime\prime}(g)\right)
\partial_{s}g+\varphi^{\prime}(g)\right\}  }{[1+(w^{\prime}(g)+s\varphi
^{\prime}(g))^{2}]^{2}}, \label{partial 2s of g}%
\end{align}
and
\begin{align}
\partial_{x,s}^{2}g  &  =-\frac{\varphi^{\prime}(g)\left[  w^{\prime
}(g)+s\varphi^{\prime}(g)\right]  \partial_{x}g+\varphi(g)\left[
w^{\prime\prime}(g)+s\varphi^{\prime\prime}(g)\right]  \partial_{x}%
g}{1+\left[  w^{\prime}(g)+s\varphi^{\prime}(g)\right]  ^{2}}%
\label{partial 2sx of g}\\
&  \quad+\frac{2\varphi(g)\left(  w^{\prime}(g)+s\varphi^{\prime}(g)\right)
^{2}\left(  w^{\prime\prime}(g)+s\varphi^{\prime\prime}(g)\right)
\partial_{x}g}{[1+(w^{\prime}(g)+s\varphi^{\prime}(g))^{2}]^{2}},\nonumber
\end{align}
while by Schwartz's theorem $\partial_{s,x}^{2}g=\partial_{x,s}^{2}g$.

On the other hand, by Theorem~\ref{theorem existence t0}, (\ref{IVP}),
(\ref{partial x of t0}), (\ref{g}), and (\ref{eta at time t0}), for
$(s,x)\in[0,1]\times D$,%
\begin{align}
\partial_{x}g(s,x)  &  =\partial_{t}\xi(t_{0}(s,x),x)\partial_{x}%
t_{0}(s,x)+\partial_{x}\xi(t_{0}(s,x),x)\nonumber\\
&  =\frac{\partial_{x}\xi(t_{0}(s,x),x)}{1+\left[  w^{\prime}(g(s,x))+s\varphi
^{\prime}(g(s,x))\right]  ^{2}}\label{partial x of g (new)}\\
&  \quad+\frac{w^{\prime}(g(s,x))+s\varphi^{\prime}(g(s,x))}{1+\left[
w^{\prime}(g(s,x))+s\varphi^{\prime}(g(s,x))\right]  ^{2}}\partial_{x}%
\eta(t_{0}(s,x),x).\nonumber
\end{align}
Differentiating with respect to $x$, we get%
\begin{align}
\partial_{x}^{2}g  &  =-\frac{2\left(  w^{\prime}(g)+s\varphi^{\prime
}(g)\right)  (w^{\prime\prime}(g)+s\varphi^{\prime\prime}(g))}{\left[
1+\left(  w^{\prime}(g)+s\varphi^{\prime}(g)\right)  ^{2}\right]  ^{2}%
}\partial_{x}g\partial_{x}\xi(t_{0},x)\nonumber\\
&  \quad+\frac{\partial_{x,t}^{2}\xi(t_{0},x)\partial_{x}t_{0}}{1+\left(
w^{\prime}(g)+s\varphi^{\prime}(g)\right)  ^{2}}+\frac{\partial_{x}^{2}%
\xi(t_{0},x)}{1+\left(  w^{\prime}(g)+s\varphi^{\prime}(g)\right)  ^{2}%
}\label{partial 2x of g (new)}\\
&  \quad+\frac{\left[  1-\left(  w^{\prime}(g)+s\varphi^{\prime}(g)\right)
^{2}\right]  (w^{\prime\prime}(g)+s\varphi^{\prime\prime}(g))}{\left[
1+\left(  w^{\prime}(g)+s\varphi^{\prime}(g)\right)  ^{2}\right]  ^{2}%
}\partial_{x}g\partial_{x}\eta(t_{0},x)\nonumber\\
&  \quad+\frac{(w^{\prime}(g)+s\varphi^{\prime}(g))\partial_{x,t}^{2}%
\eta(t_{0},x)\partial_{x}t_{0}}{1+\left(  w^{\prime}(g)+s\varphi^{\prime
}(g)\right)  ^{2}}+\frac{(w^{\prime}(g)+s\varphi^{\prime}(g))\partial_{x}%
^{2}\eta(t_{0},x)}{1+\left(  w^{\prime}(g)+s\varphi^{\prime}(g)\right)  ^{2}%
}\nonumber\\
&  =:I+II+III+IV+V+VI.\nonumber
\end{align}

It remains to study the regularity of $g$ at points $(s_{0},x_{0})$ with
$\varphi(x_{0})=0$.\smallskip

\noindent\textbf{Step 2:} \textbf{Regularity at points }$s\geq0$, $x\notin[
a,b]$\textbf{.} Let $s_{0}\geq0$ and $x_{0}\notin[ a,b]$. Since $\varphi
\equiv0$ outside $(a,b)$, by (\ref{g at s=0}) we have that $g(s,x)=x$ for all
$s\geq0$ and $x\in\mathbb{R}\setminus(a,b)$. It follows that for all $s\geq0$
and $x\in\mathbb{R}\setminus[ a,b]$,
\begin{align}
\partial_{x}g(s,x)  &  =1,\quad\partial_{s}%
g(s,x)=0,\label{partial g outside [a,b]}\\
\partial_{s}^{2}g(s,x)  &  =\partial_{x}^{2}g(s,x)=\partial_{x,s}%
^{2}g(s,x)=\partial_{s,x}^{2}g(s,x)=0.\nonumber
\end{align}

\noindent\textbf{Step 3: Continuity of }$g$\textbf{.} Let $s_{0}\geq0$ and let
$x_{0}\in[ a,b]$ be such that $\varphi(x_{0})=0$. By (\ref{t0 when u=0}) and
(\ref{g at s=0}), we have that $t_{0}(\cdot,x_{0})\equiv0$ and $g(\cdot
,x_{0})\equiv x_{0}$, respectively. Then by (\ref{IVP}), (\ref{g}), and
(\ref{g at s=0}), we have
\begin{align}
g(s,x)-x_{0}  &  =\xi(t_{0}(s,x),x)-\xi(0,x)+x-x_{0}\nonumber\\
&  =\int_{0}^{t_{0}(s,x)}\partial_{t}\xi(r,x)\,dr+x-x_{0}\label{g-a}\\
&  =-\int_{0}^{t_{0}(s,x)}[w^{\prime}(\xi(r,x))\varphi(\xi(r,x))\nonumber\\
&  \quad+\left(  \eta(r,x)-w(\xi(r,x))\right)  \varphi^{\prime}(\xi
(r,x))\,dr+x-x_{0}.\nonumber
\end{align}
Since $\xi(\cdot,x_{0})\equiv x_{0}$ and $\eta(\cdot,x_{0})\equiv w(x_{0})$ by
(\ref{solution when u=0}), it follows that%
\[
w^{\prime}(\xi(t,x_{0}))\varphi(\xi(t,x_{0}))+\left(  \eta(t,x_{0}%
)-w(\xi(t,x_{0}))\right)  \varphi^{\prime}(\xi(t,x_{0}))=0
\]
for all $t\in[0,1]$. By continuity with respect to initial data, we deduce that
the functions $(t,x)\mapsto\xi(t,x)$ and $(t,x)\mapsto\eta(t,x)$ are uniformly
continuous on compact sets, and so using also the facts that $w$ is smooth and
$\varphi\in C^{2}(\mathbb{R})$, we have that given $\varepsilon>0$ there
exists $\delta>0$ such that
\[
\left\vert w^{\prime}(\xi(t,x))\varphi(\xi(t,x))+\left(  \eta(t,x)-w(\xi
(t,x))\right)  \varphi^{\prime}(\xi(t,x))\right\vert \leq\varepsilon
\]
for all $t\in\left[  0,1\right]  $ and all $x$ with $\left\vert x-x_{0}%
\right\vert \leq\delta$. Since $0\leq t_{0}\leq1$ by (\ref{t0 bounded}), it
follows that
\[
\int_{0}^{t_{0}(s,x)}\left\vert w^{\prime}(\xi(r,x))\varphi(\xi(r,x))+\left(
\eta(r,x)-w(\xi(r,x))\right)  \varphi^{\prime}(\xi(r,x))\right\vert
\,dr\leq\varepsilon
\]
for all $(s,x) $ with $\left\vert x-x_{0}\right\vert \leq\delta$. By (\ref{g-a}) we obtain
\begin{equation}
\lim_{x\rightarrow x_{0}}g(s,x)=x_{0} \label{g continuous at x0}%
\end{equation}
uniformly for all $s\in[0,1]$. This shows that $g$ is continuous at
$(s_{0},x_{0})$.

In particular, if $\varphi\neq0$ in some interval $(\alpha,\beta)$ and
$\varphi(\alpha)=\varphi(\beta)=0$, by the continuity of $g$, it follows from
(\ref{formula t0}) that%
\begin{equation}
\lim_{(s,x) \rightarrow(s_{0},\alpha) ^{+}}t_{0}(s,x) =T_{0}( s_{0},\alpha) ,
\label{limit t0}%
\end{equation}
where%
\begin{align}
T_{0}(s,x) :  &  =\int_{0}^{s}\frac{1}{1+\left[  w^{\prime}(x)+r\varphi
^{\prime}(x)\right]  ^{2}}\,dr\label{T0(s,x)}\\
&  =\left\{
\begin{array}
[c]{ll}%
[\arctan(w^{\prime}(x)+s\varphi^{\prime}(x))-\arctan(w^{\prime}(x))]/\varphi
^{\prime}(x) & \text{if }\varphi^{\prime}(x)\neq0,\\
s/[1+(w^{\prime}(x))^{2}] & \text{if }\varphi^{\prime}(x)=0.
\end{array}
\right. \nonumber
\end{align}
Since $t_{0}(s,\alpha) =0$, this shows that the function $t_{0}$ is
discontinuous at $( s_{0},\alpha)$ for all $s_{0}>0$. A similar result holds
at the endpoint $\beta$.\smallskip

\noindent\textbf{Step 4: Existence and continuity of }$\partial_{s}g$
\textbf{and} $\partial_{x}g$. Let $s_{0}\geq0$ and let $x_{0}\in\left[
a,b\right]  $ be such that $\varphi(x_{0}) =0$. By (\ref{g at s=0}), we have
that $g(\cdot,x_{0})\equiv x_{0}$, and so
\begin{equation}
\partial_{s}g(s,x_{0})=0 \label{partial s of g at a}%
\end{equation}
for all $s\geq0$. On the other hand, if $\varphi\neq0$ in some interval
$(x_{0},x_{0}+\delta)$ (the case $(x_{0}-\delta,x_{0})$ is similar), by the
continuity of $g$ and (\ref{partial s of g (new)}),
\[
\partial_{s}g(s,x)\rightarrow\frac{-\varphi(x_{0})\left[  w^{\prime}%
(x_{0})+s\varphi^{\prime}(x_{0})\right]  }{1+\left[  w^{\prime}(x_{0}%
)+s\varphi^{\prime}(x_{0})\right]  ^{2}}=0
\]
as $(s,x)\rightarrow(s_{0},x_{0})^{+}$. Hence, $\partial_{s}g$ is continuous
at $(s_{0},x_{0})$ for all $s_{0}\geq0$.

Next, we prove the existence and continuity of $\partial_{x}g$ at $(s_{0}%
,x_{0})$ for all $s_{0}\geq0$. We assume, as before, that $\varphi\neq0$ in some
interval $(x_{0},x_{0}+\delta)$ (the case $(x_{0}-\delta,x_{0})$ is similar).
Differentiating (\ref{IVP}) with respect to $x$, we obtain%
\begin{equation}
\left\{
\begin{array}
[c]{l}%
\partial_{t}( \partial_{x}\xi) =-[w^{\prime\prime}(\xi)\varphi(\xi)+\left(
\eta-w(\xi)\right)  \varphi^{\prime\prime}(\xi)]\partial_{x}\xi-\varphi
^{\prime}(\xi)\partial_{x}\eta,\\
\partial_{t}\left(  \partial_{x}\eta\right)  =\varphi^{\prime}(\xi
)\partial_{x}\xi,\\
\partial_{x}\xi(0,x)=1,\\
\partial_{x}\eta(0,x)=w^{\prime}(x) .
\end{array}
\right.  \label{IVP for partial x}%
\end{equation}
Since $\xi(\cdot,x_{0})\equiv x_{0}$ and $\eta(\cdot,x_{0})\equiv w(x_{0})$ by
(\ref{solution when u=0}), we have that $\partial_{x}\xi(\cdot,x_{0})$ and
$\partial_{x}\eta(\cdot,x_{0})$ solve the system%
\[
\left\{
\begin{array}
[c]{l}%
\partial_{t}\left(  \partial_{x}\xi(\cdot,x_{0})\right)  =-\varphi^{\prime
}(x_{0})\partial_{x}\eta(\cdot,x_{0}),\\
\partial_{t}\left(  \partial_{x}\eta(\cdot,x_{0})\right)  =\varphi^{\prime
}(x_{0})\partial_{x}\xi(\cdot,x_{0}),\\
\partial_{x}\xi(0,x_{0})=1,\\
\partial_{x}\eta(0,x_{0})=w^{\prime}( x_{0} ) ,
\end{array}
\right.
\]
and so
\begin{align}
\partial_{x}\xi(t,x_{0})  &  =\cos(\varphi^{\prime}(x_{0})t)-w^{\prime}%
(x_{0})\sin(\varphi^{\prime}(x_{0})t),\label{partialx x of xi and eta at x0}\\
\partial_{x}\eta(t,x_{0})  &  =w^{\prime}(x_{0})\cos(\varphi^{\prime}%
(x_{0})t)+\sin(\varphi^{\prime}(x_{0})t).\nonumber
\end{align}
By the the continuity of $\partial_{x}\xi$ and $\partial_{x}\eta$,
(\ref{limit t0}) and (\ref{partialx x of xi and eta at x0}),%
\begin{align}
\lim_{(s,x)\rightarrow(s_{0},x_{0})^{+}}  &  \partial_{x}\xi(t_{0}%
(s,x),x)=\partial_{x}\xi(T_{0}(s_{0},x_{0}),x_{0})\nonumber\\
&  =\cos(\varphi^{\prime}(x_{0})T_{0}(s,x_{0}))-w^{\prime}(x_{0})\sin
(\varphi^{\prime}(x_{0})T_{0}(s,x_{0}))\label{limit partial x xi}\\
&  =\frac{\sqrt{1+\left[  w^{\prime}(x_{0})\right]  ^{2}}}{\sqrt{1+\left[
w^{\prime}(x_{0})+s_{0}\varphi^{\prime}(x_{0})\right]  ^{2}}}\nonumber
\end{align}
and%
\begin{align}
\lim_{(s,x)\rightarrow(s_{0},x_{0})^{+}}  &  \partial_{x}\eta(t_{0}%
(s,x),x)=\partial_{x}\eta(T_{0}(s_{0},x_{0}),x_{0})\nonumber\\
&  =w^{\prime}(x_{0})\cos(\varphi^{\prime}(x_{0})T_{0}(s,x_{0}))+\sin
(\varphi^{\prime}(x_{0})T_{0}(s,x_{0}))\label{limit partial x eta}\\
&  =\frac{(w^{\prime}(x_{0})+s_{0}\varphi^{\prime}(x_{0}))\sqrt{1+\left[
w^{\prime}(x_{0})\right]  ^{2}}}{\sqrt{1+\left[  w^{\prime}(x_{0}%
)+s_{0}\varphi^{\prime}(x_{0})\right]  ^{2}}},\nonumber
\end{align}
where we have used the formulas%
\begin{align*}
\cos\left(  \arctan( x+y ) -\arctan y\right)   &  =\frac{1+xy+y^{2}}%
{\sqrt{1+y^{2}}\sqrt{1+(x+y)^{2}}},\\
\sin\left(  \arctan( x+y) -\arctan y\right)   &  =\frac{x}{\sqrt{1+y^{2}}%
\sqrt{1+(x+y)^{2}}}.
\end{align*}
Note that
\begin{equation}
\left[  w^{\prime}(x_{0})+s\varphi^{\prime}(x_{0})\right]  \partial_{x}%
\xi(T_{0}(s,x_{0}),x_{0})-\partial_{x}\eta(T_{0}(s,x_{0}),x_{0})=0
\label{ell 1 and ell2 =0}%
\end{equation}
for every $s\in\left[  0,1\right]  $.

By (\ref{partial x of g (new)}), (\ref{limit partial x xi}),
(\ref{limit partial x eta}), we obtain%
\begin{equation}
\lim_{(s,x)\rightarrow(s_{0},x_{0})^{+}}\partial_{x}g(s,x)=\frac
{\sqrt{1+\left[  w^{\prime}(x_{0})\right]  ^{2}}}{\sqrt{1+\left[  w^{\prime
}(x_{0})+s_{0}\varphi^{\prime}(x_{0})\right]  ^{2}}}.
\label{limit partial x g}%
\end{equation}
By the continuity of $g$ proved in Step~3,%
\[
\lim_{x\rightarrow x_{0}^{+}}\frac{g(s_{0},x)-g(s_{0},x_{0})}{x-x_{0}}%
=\frac{0}{0},
\]
and so we can apply L'H\^{o}pital's rule to the function
$x\mapsto g(s_{0},x)$ to conclude that there exists
\[
\lim_{x\rightarrow x_{0}^{+}}\frac{g(s_{0},x)-g(s_{0},x_{0})}{x-x_{0}}%
=\lim_{x\rightarrow x_{0}^{+}}\frac{\partial_{x}g(s_{0},x)}{1}=\frac
{\sqrt{1+\left[  w^{\prime}(x_{0})\right]  ^{2}}}{\sqrt{1+\left[  w^{\prime
}(x_{0})+s_{0}\varphi^{\prime}(x_{0})\right]  ^{2}}}.
\]
If $\varphi\neq0$ also in some interval $(x_{0}-\delta_{1},x_{0})$, then we
conclude in the same way that%
\[
\lim_{x\rightarrow x_{0}^{-}}\frac{g(s_{0},x)-g(s_{0},x_{0})}{x-x_{0}}%
=\lim_{x\rightarrow x_{0}^{-}}\frac{\partial_{x}g(s_{0},x)}{1}=\frac
{\sqrt{1+\left[  w^{\prime}(x_{0})\right]  ^{2}}}{\sqrt{1+\left[  w^{\prime
}(x_{0})+s_{0}\varphi^{\prime}(x_{0})\right]  ^{2}}},
\]
and so we deduce that there exists
\begin{equation}
\partial_{x}g(s_{0},x_{0})=\frac{\sqrt{1+\left[  w^{\prime}(x_{0})\right]
^{2}}}{\sqrt{1+\left[  w^{\prime}(x_{0})+s_{0}\varphi^{\prime}(x_{0})\right]
^{2}}} \label{partial x of g at x=a}%
\end{equation}
and that $\partial_{x}g$ is continuous at $(s_{0},x_{0})$. On the other hand,
if $\varphi=0$ in some interval $(x_{0}-\delta_{1},x_{0})$, then $x_{0}=a$,
and $\varphi^{\prime}( x_{0} ) =0$. It follows that the limit in
(\ref{limit partial x g}) is $1$, and so by (\ref{partial g outside [a,b]}) we
obtain again that there exists $\partial_{x}g(s_{0},x_{0})=1$ and that
$\partial_{x}g$ is continuous at $(s_{0},x_{0})$.\smallskip

\noindent\textbf{Step 5: Existence and continuity of }$\partial_{s}^{2}g$,
$\partial_{x,s}^{2}g$, \textbf{and} $\partial_{s,x}^{2}g$. Let $s_{0}\geq0$
and let $x_{0}\in\left[  a,b\right]  $ be such that $\varphi( x_{0}) =0$. We
assume, as before, that $\varphi\neq0$ in some interval $(x_{0},x_{0}+\delta)$
(the case $(x_{0}-\delta,x_{0})$ is similar). By (\ref{partial s of g at a}),
we have that $\partial_{s}^{2}g(s,x_{0})=0$ for all $s\geq0$. On the other
hand, by (\ref{partial 2s of g}) and the continuity of $g$ and $\partial_{s}%
g$,%
\begin{equation}
\lim_{(s,x)\rightarrow(s_{0},a)^{+}}\partial_{s}^{2}g(s,x) =0.\nonumber
\end{equation}
By (\ref{partial 2sx of g}) and the continuity of $g$, $\partial_{s}g$, and
$\partial_{x}g$,%
\begin{equation}
\lim_{(s,x)\rightarrow(s_{0},x_{0})^{+}}\partial_{x,s}^{2}g(s,x)
=-\frac{\varphi^{\prime}(x_{0})\left[  w^{\prime}(x_{0})+s_{0}\varphi^{\prime
}(x_{0})\right]  \partial_{x}g(s_{0},x_{0})}{1+\left[  w^{\prime}%
(x_{0})+s\varphi^{\prime}(x_{0})\right]  ^{2}}.\nonumber
\end{equation}
On the other hand, by Step~4,%
\[
\lim_{x\rightarrow x_{0}^{+}}\frac{\partial_{s}g(s_{0},x)-\partial_{s}%
g(s_{0},x_{0})}{x-x_{0}}=\frac{0}{0},
\]
and so we can apply L'H\^{o}pital's rule to the function
$x\mapsto\partial_{s}g(s_{0},x)$ to conclude that there exists%
\begin{align}
\lim_{x\rightarrow x_{0}^{+}}\frac{\partial_{s}g(s_{0},x)-\partial_{s}%
g(s_{0},x_{0})}{x-x_{0}}  &  =\lim_{x\rightarrow x_{0}^{+}}\frac
{\partial_{x,s}^{2}g(s_{0},x)}{1}\label{295}\\
&  =-\frac{\varphi^{\prime}(x_{0})\left[  w^{\prime}(x_{0})+s_{0}%
\varphi^{\prime}(x_{0})\right]  \partial_{x}g(s_{0},x_{0})}{1+\left[
w^{\prime}(x_{0})+s\varphi^{\prime}(x_{0})\right]  ^{2}}.\nonumber
\end{align}
If $\varphi\neq0$ also in some interval $(x_{0}-\delta_{1},x_{0})$, then we
deduce as in the previous step that that there exists $\partial
_{x,s}^{2}g(s_{0},x_{0})$ and that $\partial_{x,s}^{2}g$ is continuous at
$(s_{0},x_{0})$. On the other hand, if $\varphi=0$ in some interval
$(x_{0}-\delta_{1},x_{0})$, then $x_{0}=a$, and $\varphi^{\prime}( x_{0}) =0$.
It follows from (\ref{295}) and (\ref{partial g outside [a,b]}) we obtain
again that there exists $\partial_{x,s}^{2}g(s_{0},x_{0})=0$ and that
$\partial_{x,s}^{2}g$ is continuous at $(s_{0},x_{0})$.

In both cases we can apply Schwartz's theorem to conclude that there exists
$\partial_{s,x}^{2}g(s_{0},x_{0})$ and that%
\[
\partial_{s,x}^{2}g(s_{0},x_{0})=\partial_{x,s}^{2}g(s_{0},x_{0}).
\]

\noindent\textbf{Step 6: Existence and continuity of }$\partial_{x}^{2}g$. By
Step~3, (\ref{limit partial x xi}), (\ref{limit partial x eta}),
(\ref{limit partial x g}), and (\ref{partial 2x of g (new)}), we have%
\begin{align}
\lim_{(s,x)\rightarrow(s_{0},x_{0})^{+}}I  &  =-\frac{2\left(  w^{\prime
}(x_{0})+s\varphi^{\prime}(x_{0})\right)  (w^{\prime\prime}(x_{0}%
)+s\varphi^{\prime\prime}(x_{0}))}{\left[  1+\left(  w^{\prime}(x_{0}%
)+s\varphi^{\prime}(x_{0})\right)  ^{2}\right]  ^{2}}\label{limit I}\\
&  \quad\times\partial_{x}g(s_{0},x_{0})\partial_{x}\xi(T_{0}(s_{0}%
,x_{0}),x_{0}),\nonumber\\
\lim_{(s,x)\rightarrow(s_{0},x_{0})^{+}}IV  &  =\frac{\left[  1-\left(
w^{\prime}(x_{0})+s\varphi^{\prime}(x_{0})\right)  ^{2}\right]  (w^{\prime
\prime}(x_{0})+s\varphi^{\prime\prime}(x_{0}))}{\left[  1+\left(  w^{\prime
}(x_{0})+s\varphi^{\prime}(x_{0})\right)  ^{2}\right]  ^{2}}\label{limit IV}\\
&  \quad\times\partial_{x}g(s_{0},x_{0})\partial_{x}\eta(T_{0}(s_{0}%
,x_{0}),x_{0}).\nonumber
\end{align}
On the other hand, by Step~3, the continuity of $\partial_{x}^{2}\xi$ and
$\partial_{x}^{2}\eta$, (\ref{partial 2x of g (new)}), and (\ref{limit t0}),%
\begin{equation}
\lim_{(s,x)\rightarrow(s_{0},x_{0})^{+}}III=\frac{\partial_{x}^{2}\xi
(T_{0}(s_{0},x_{0}),x_{0})}{1+\left(  w^{\prime}(x_{0})+s_{0}\varphi^{\prime
}(x_{0})\right)  ^{2}}, \label{limit III}%
\end{equation}
and
\begin{equation}
\lim_{(s,x)\rightarrow(s_{0},x_{0})^{+}}VI=\frac{(w^{\prime}(x_{0}%
)+s\varphi^{\prime}(x_{0}))\partial_{x}^{2}\eta(T_{0}(s_{0},x_{0}),x_{0}%
)}{1+\left(  w^{\prime}(x_{0})+s\varphi^{\prime}(x_{0})\right)  ^{2}}.
\label{limit VI}%
\end{equation}

It remains to estimate $II$ and $V$ in (\ref{partial 2x of g (new)}). By
Taylor's formula, we obtain%
\[
w^{\prime}(z)+s\varphi^{\prime}(z)=w^{\prime}(x_{0})+s\varphi^{\prime}%
(x_{0})+(w^{\prime\prime}(x_{0})+s\varphi^{\prime\prime}(x_{0})) ( z-x_{0}) +o
( z-x_{0}) .
\]
Hence, also by (\ref{g continuous at x0}),%
\begin{align}
&  \frac{\left[  w^{\prime}(g(s,x))+s\varphi^{\prime}(g(s,x))\right]
\partial_{x}\xi(t_{0}(s,x),x)-\partial_{x}\eta(t_{0}(s,x),x)}{g(s,x)-x_{0}%
}\label{901}\\
&  =\frac{(w^{\prime}(x_{0})+s\varphi^{\prime}(x_{0}))\partial_{x}\xi
(t_{0}(s,x),x)-\partial_{x}\eta(t_{0}(s,x),x)}{g(s,x)-x_{0}}\nonumber\\
&  \quad+\left(  w^{\prime\prime}(x_{0})+s\varphi^{\prime\prime}(x_{0})+o( 1)
\right)  \partial_{x}\xi(t_{0}(s,x),x).\nonumber
\end{align}
By repeated applications of the mean value theorem, we have that
\begin{align}
(w^{\prime}  &  (x_{0})+s\varphi^{\prime}(x_{0}))\partial_{x}\xi
(t_{0}(s,x),x)-\partial_{x}\eta(t_{0}(s,x),x)\nonumber\\
&  =(w^{\prime}(x_{0})+s\varphi^{\prime}(x_{0}))\partial_{x}\xi(t_{0}%
(s,x),x_{0})-\partial_{x}\eta(t_{0}(s,x),x_{0})\nonumber\\
&  \quad+(x-x_{0})[(w^{\prime}(x_{0})+s\varphi^{\prime}(x_{0}))\partial
_{x}^{2}\xi(t_{0}(s,x),x_{1})-\partial_{x}^{2}\eta(t_{0}(s,x),x_{1}%
)]\label{902}\\
&  =(x-x_{0})\Big\{ \Big( \frac{t_{0}(s,x)-T_{0}(s,x_{0})}{x-x_{0}%
}\Big) [(w^{\prime}(x_{0})+s\varphi^{\prime}(x_{0}))\partial_{x,t}^{2}%
\xi(t_{1},x_{0})\nonumber\\
&  \quad-\partial_{x,t}^{2}\eta(t_{1},x_{0})]+ [(w^{\prime}(x_{0}%
)+s\varphi^{\prime}(x_{0}))\partial_{x}^{2}\xi(t_{0}(s,x),x_{1})-\partial
_{x}^{2}\eta(t_{0}(s,x),x_{1})]\Big\}\nonumber
\end{align}
for some $x_{1}$ between $x$ and $x_{0}$ and for some $t_{1}(s,x)$ between
$t_{0}(s,x)$ and $T_{0}(s,x_{0})$, and where we have used
(\ref{ell 1 and ell2 =0}).

By (\ref{g at s=0}), (\ref{formula t0}), (\ref{T0(s,x)}), and again the mean
value theorem, we get%
\begin{align}
&  t_{0}(s,x)-T_{0}(s,x_{0})\nonumber\\
&  =\int_{0}^{s}\frac{(w^{\prime}(x_{0})+r\varphi^{\prime}(x_{0}%
))^{2}-(w^{\prime}(g(r,x) )+r\varphi^{\prime}(g( r,x) ))^{2}}{[1+(w^{\prime
}(g(r,x) )+r\varphi^{\prime}(g(r,x) ))^{2}][1+(w^{\prime}(x_{0})+r\varphi
^{\prime}(x_{0}))^{2}]}\,dr\label{903}\\
&  =-(x-x_{0})\int_{0}^{s}\frac{2(w^{\prime}(g( r,c) )+r\varphi^{\prime}(g(
r,c) ))(w^{\prime\prime}(g( r,c) )+r\varphi^{\prime\prime}(g( r,c)
))\partial_{x}g(r,c)}{[1+(w^{\prime}(g(r,x) )+r\varphi^{\prime}(g( r,x)
))^{2}][1+(w^{\prime}(x_{0})+r\varphi^{\prime}(x_{0}))^{2}]}\,dr\nonumber
\end{align}
for some $c=c( r,x,x_{0})$ between $x$ and $x_{0}$. Hence, by (\ref{g at s=0})
and the continuity of $g$ and $\partial_{x}g$,%
\begin{align}
&  \lim_{(s,x)\rightarrow(s_{0},x_{0})^{+}}\frac{t_{0}(s,x)-T_{0}(s,x_{0}%
)}{x-x_{0}}\label{904}\\
&  \quad=-\int_{0}^{s_{0}}\frac{2(w^{\prime}(x_{0})+r\varphi^{\prime}%
(x_{0}))(w^{\prime\prime}(x_{0})+r\varphi^{\prime\prime}(x_{0}))\partial
_{x}g(r,x_{0})}{[1+(w^{\prime}(x_{0})+r\varphi^{\prime}(x_{0}))^{2}]^{2}%
}\,dr=:\ell_{1}.\nonumber
\end{align}
By (\ref{g at s=0}), (\ref{limit partial x g}), and the mean value theorem, we
deduce that
\begin{equation}
\frac{g(s,x) -x_{0}}{x-x_{0}}=\frac{g(s,x) -g( s,x_{0}) }{x-x_{0}}%
=\partial_{x}g( s,\theta) \rightarrow\partial_{x}g(s_{0},x_{0})
\label{limit xi-a}%
\end{equation}
as $(s,x)\rightarrow(s_{0},x_{0})$. Hence, letting $(s,x)\rightarrow
(s_{0},x_{0})$ in (\ref{901}) and using (\ref{902}), (\ref{904}), and
(\ref{limit xi-a}) gives%
\begin{align}
&  \lim_{(s,x)\rightarrow(s_{0},x_{0})^{+}}\frac{\left[  w^{\prime
}(g(s,x))+s\varphi^{\prime}(g(s,x))\right]  \partial_{x}\xi(t_{0}%
(s,x),x)-\partial_{x}\eta(t_{0}(s,x),x)}{g(s,x)-x_{0}}\nonumber\\
&  =\frac{\left\{  \ell_{1}[(w^{\prime}(x_{0})+s\varphi^{\prime}%
(x_{0}))\partial_{x,t}^{2}\xi(T_{0}(s_{0},x_{0}),x_{0})-\partial_{x,t}^{2}%
\eta(T_{0}(s_{0},x_{0}),x_{0})]\right.  }{\partial_{x}g(s_{0},x_{0}%
)}\label{905}\\
&  \quad+\left.  [(w^{\prime}(x_{0})+s\varphi^{\prime}(x_{0}))\partial_{x}%
^{2}\xi(T_{0}(s_{0},x_{0}),x_{0})-\partial_{x}^{2}\eta(T_{0}(s_{0}%
,x_{0}),x_{0})]\right\} \nonumber\\
&  \quad+\left(  w^{\prime\prime}(x_{0})+s\varphi^{\prime\prime}%
(x_{0})\right)  \partial_{x}\xi(T_{0}(s_{0},x_{0}),x_{0})=:\ell_{2}.\nonumber
\end{align}

By (\ref{eta at time t0}), (\ref{partial x of t0}), (\ref{IVP for partial x}),
(\ref{partial 2x of g (new)}),%
\begin{align}
II  &  = -\frac{[w^{\prime\prime}(g)\varphi(g)+s\varphi(g)\varphi
^{\prime\prime}(g)]\partial_{x}\xi(t_{0},x)-\varphi^{\prime}(g)\partial
_{x}\eta(t_{0},x)}{1+\left(  w^{\prime}(g)+s\varphi^{\prime}(g)\right)  ^{2}%
}\nonumber\\
&  \times\frac{\left[  w^{\prime}(g)+s\varphi^{\prime}(g)\right]  \partial
_{x}\xi(t_{0},x)-\partial_{x}\eta(t_{0},x)}{\varphi(g)\left[  1+\left(
w^{\prime}(g)+s\varphi^{\prime}(g)\right)  ^{2}\right]  }\nonumber\\
&  =-[w^{\prime\prime}(g)+s\varphi^{\prime\prime}(g)]\partial_{x}\xi
(t_{0},x)\frac{\left[  w^{\prime}(g)+s\varphi^{\prime}(g)\right]  \partial
_{x}\xi(t_{0},x)-\partial_{x}\eta(t_{0},x)}{\left[  1+\left(  w^{\prime
}(g)+s\varphi^{\prime}(g)\right)  ^{2}\right]  ^{2}}\label{II=IIa+IIb}\\
&  \quad-\frac{\varphi^{\prime}(g)}{\varphi(g)}\partial_{x}\eta(t_{0}%
,x)\frac{\left[  w^{\prime}(g)+s\varphi^{\prime}(g)\right]  \partial_{x}%
\xi(t_{0},x)-\partial_{x}\eta(t_{0},x)}{\left[  1+\left(  w^{\prime
}(g)+s\varphi^{\prime}(g)\right)  ^{2}\right]  ^{2}}\nonumber\\
&  =II_{a}+II_{b}.\nonumber
\end{align}
By Step~3, (\ref{limit partial x xi}), (\ref{limit partial x eta}), and
(\ref{ell 1 and ell2 =0}),
\begin{align}
&  \lim_{(s,x)\rightarrow(s_{0},x_{0})^{+}}II_{a}=-[w^{\prime\prime}%
(x_{0})+s_{0}\varphi^{\prime\prime}(x_{0})]\partial_{x}\xi(T_{0}(s_{0}%
,x_{0}),x_{0})\label{limit IIa}\\
&  \quad\times\frac{\left[  w^{\prime}(x_{0})+s_{0}\varphi^{\prime}%
(x_{0})\right]  \partial_{x}\xi(T_{0}(s_{0},x_{0}),x_{0})-\partial_{x}%
\eta(T_{0}(s_{0},x_{0}),x_{0})}{\left[  1+\left(  w^{\prime}(x_{0}%
)+s_{0}\varphi^{\prime}(x_{0})\right)  ^{2}\right]  ^{2}}=0.\nonumber
\end{align}
Since $\varphi$ is a polynomial with $\varphi( x_{0}) =0$, we may write%
\begin{equation}
\varphi( z ) =p ( z) ( z-x_{0}) ^{k}, \label{u polynomial near a}%
\end{equation}
where $p$ is a polynomial with $p( x_{0}) \neq0$ and $k\geq1$. In turn,
\begin{equation}
\varphi^{\prime}( z) = ( z-x_{0}) ^{k-1}[p^{\prime}( z) ( z-x_{0}) +kp( z) ].
\label{derrivative u}%
\end{equation}
By (\ref{u polynomial near a}), (\ref{derrivative u}),
\begin{align*}
II_{b}  &  =-\frac{[p^{\prime}( g) ( g-x_{0}) +kp( g) ]\partial_{x}\eta
(t_{0},x)}{\left[  1+\left(  w^{\prime}(g)+s\varphi^{\prime}(g)\right)
^{2}\right]  ^{2}p(g)}\\
&  \quad\times\frac{\left[  w^{\prime}(g)+s\varphi^{\prime}(g)\right]
\partial_{x}\xi(t_{0},x)-\partial_{x}\eta(t_{0},x)}{g-x_{0}},
\end{align*}
and so by Step~3, (\ref{f and itz derivatives zero at endpoints}),
(\ref{limit partial x eta}), and (\ref{905}),%
\begin{equation}
\lim_{(s,x)\rightarrow(s_{0},x_{0})^{+}}II_{b}=-\frac{k \partial_{x}\eta
(T_{0}(s_{0},x_{0}),x_{0})}{\left[  1+\left(  w^{\prime}(x_{0})+s_{0}%
\varphi^{\prime}(x_{0})\right)  ^{2}\right]  ^{2}}\ell_{2}. \label{limit IIb}%
\end{equation}

By (\ref{eta at time t0}), (\ref{u polynomial near a}), (\ref{partial x of t0}%
), (\ref{IVP for partial x}), (\ref{partial 2x of g (new)}),
(\ref{derrivative u}),
\begin{align*}
V  &  =\frac{(w^{\prime}(g)+s\varphi^{\prime}(g))\varphi^{\prime}(g
)\partial_{x}\xi(t_{0},x)}{1+\left(  w^{\prime}(g)+s\varphi^{\prime
}(g)\right)  ^{2}}\frac{\left[  w^{\prime}(g)+s\varphi^{\prime}(g)\right]
\partial_{x}\xi(t_{0},x)-\partial_{x}\eta(t_{0},x)}{\varphi(g)\left[
1+\left(  w^{\prime}(g)+s\varphi^{\prime}(g)\right)  ^{2}\right]  }\\
&  =\frac{(w^{\prime}(g)+s\varphi^{\prime}(g))[p^{\prime}( g) ( g-x_{0}) +kp(
g) ]\partial_{x}\xi(t_{0},x)}{\left[  1+\left(  w^{\prime}(g)+s\varphi
^{\prime}(g)\right)  ^{2}\right]  ^{2}p(g)}\\
&  \quad\times\frac{\left[  w^{\prime}(g)+s\varphi^{\prime}(g)\right]
\partial_{x}\xi(t_{0},x)-\partial_{x}\eta(t_{0},x)}{g-x_{0}},
\end{align*}
and so by Step~3, (\ref{limit partial x eta}), and (\ref{905}),%
\begin{equation}
\lim_{(s,x)\rightarrow(s_{0},x_{0})^{+}}V=\frac{(w^{\prime}(x_{0}%
)+s\varphi^{\prime}(x_{0}))k \partial_{x}\xi(T_{0}(s_{0},x_{0}),x_{0}%
)}{\left[  1+\left(  w^{\prime}(x_{0})+s_{0}\varphi^{\prime}(x_{0})\right)
^{2}\right]  ^{2}}\ell_{2}. \label{limit V}%
\end{equation}
Finally, by (\ref{partial 2x of g (new)}), (\ref{limit I}), (\ref{limit IV}),
(\ref{II=IIa+IIb}), (\ref{limit IIa}), (\ref{limit IIb}), (\ref{limit V}),
(\ref{limit III}), and (\ref{limit VI}), there exists
\[
\lim_{(s,x)\rightarrow(s_{0},x_{0})^{+}}\partial_{x}^{2}g(s,x) =\ell_{3}%
\in\mathbb{R}.
\]
By the continuity of $\partial_{x}g$ proved in Step~4,%
\[
\lim_{x\rightarrow x_{0}^{+}}\frac{\partial_{x}g(s_{0},x)-\partial_{x}%
g(s_{0},x_{0})}{x-x_{0}}=\frac{0}{0},
\]
and so we can apply L'H\^{o}pital's rule to the function
$x\mapsto\partial_{x}g(s_{0},x)$ to conclude that there exists
\[
\lim_{x\rightarrow x_{0}^{+}}\frac{\partial_{x}g(s_{0},x)-\partial_{x}%
g(s_{0},x_{0})}{x-x_{0}}=\lim_{x\rightarrow x_{0}^{+}}\frac{\partial_{x}%
^{2}g(s_{0},x)}{1}=\ell_{3}.
\]
If $\varphi\neq0$ also in some interval $(x_{0}-\delta_{1},x_{0})$, then the
limit as $x\rightarrow x_{0}^{-}$ is still $\ell_{3}$, and so there exists
$\partial_{x}^{2}g(s_{0},x_{0})=\ell_{3}$ and $\partial_{x}^{2}g$ is
continuous at $(s_{0},x_{0})$. On the other hand, if $\varphi=0$ in some
interval $(x_{0}-\delta_{1},x_{0})$, then $x_{0}=a$, and $\varphi^{\prime}(
x_{0}) =\varphi^{\prime\prime}(x_{0})=0$.

Then by (\ref{limit partial x xi}), (\ref{limit partial x eta}),
(\ref{limit partial x g}),
\begin{align}
\partial_{x}\xi(T_{0}(s_{0},x_{0}),x_{0})  &  =1,\quad\partial_{x}\eta
(T_{0}(s_{0},x_{0}),x_{0})=w^{\prime}(x_{0}),\label{1000}\\
\quad\partial_{x}g(s_{0},x_{0})  &  =1.\nonumber
\end{align}
To calculate $\partial_{x}^{2}\xi(T_{0}(s_{0},x_{0}),x_{0})$ and $\partial
_{x}^{2}\eta(T_{0}(s_{0},x_{0}),x_{0})$, differentiate
(\ref{IVP for partial x}) with respect to $x$ to obtain%
\[%
\begin{cases}
\partial_{t}\left(  \partial_{x}^{2}\xi\right)  =-[w^{\prime\prime\prime}%
(\xi)\varphi(\xi)+w^{\prime\prime}(\xi)\varphi^{\prime}(\xi)-w^{\prime}%
(\xi)\varphi^{\prime\prime}(\xi)\\
\quad+\left(  \eta-w(\xi)\right)  \varphi^{\prime\prime\prime}(\xi
)](\partial_{x}\xi)^{2}\\
\quad-[w^{\prime\prime}(\xi)\varphi(\xi)+\left(  \eta-w(\xi)\right)
\varphi^{\prime\prime}(\xi)]\partial_{x}^{2}\xi-\varphi^{\prime}(\xi
)\partial_{x}^{2}\eta-2\varphi^{\prime\prime}(\xi)\partial_{x}\xi\partial
_{x}\eta, \smallskip\\
\partial_{t}\left(  \partial_{x}^{2}\eta\right)  =\varphi^{\prime\prime}%
(\xi)(\partial_{x}\xi)^{2}+\varphi^{\prime}(\xi)\partial_{x}^{2}\xi,
\smallskip\\
\partial_{x}^{2}\xi(0,x)=0, \ \partial_{x}^{2}\eta(0,x)=w^{\prime\prime}(x) .
\end{cases}
\]
Since $\xi(\cdot,x_{0})\equiv x_{0}$ and $\eta(\cdot,x_{0})\equiv w(x_{0})$ by
(\ref{solution when u=0}), we have that $\partial_{x}^{2}\xi(\cdot,x_{0})$ and
$\partial_{x}^{2}\eta(\cdot,x_{0})$ solve the system%
\[%
\begin{cases}
\partial_{t}\left(  \partial_{x}^{2}\xi(\cdot,x_{0})\right)  =0,\\
\partial_{t}\left(  \partial_{x}^{2}\eta(\cdot,x_{0})\right)  =0,\\
\partial_{x}\xi(0,x_{0})=0, \ \partial_{x}\eta(0,x_{0})=w^{\prime\prime}(
x_{0}) ,
\end{cases}
\]
and so
\begin{equation}
\partial_{x}^{2}\xi(t,x_{0})\equiv0,\quad\partial_{x}^{2}\eta(t,x_{0})\equiv
w^{\prime\prime}(x_{0}). \label{1001}%
\end{equation}

By (\ref{partial 2x of g (new)}), (\ref{limit I}), (\ref{limit IV}),
(\ref{II=IIa+IIb}), (\ref{limit IIa}), (\ref{limit IIb}), (\ref{limit V}),
(\ref{limit III}), (\ref{limit VI}), (\ref{1000}), and (\ref{1001}), we have
that
\begin{align*}
\lim_{(s,x)\rightarrow(s_{0},x_{0})^{+}}\partial_{x}^{2}g(s,x)  &
=-\frac{2w^{\prime}(x_{0})w^{\prime\prime}(x_{0})}{\left[  1+\left(
w^{\prime}(x_{0})\right)  ^{2}\right]  ^{2}}-\frac{k w^{\prime}(x_{0}%
)}{\left[  1+\left(  w^{\prime}(x_{0})\right)  ^{2}\right]  ^{2}}\ell_{2}+0\\
&  \quad+\frac{\left[  1-\left(  w^{\prime}(x_{0})\right)  ^{2}\right]
w^{\prime\prime}(x_{0})}{\left[  1+\left(  w^{\prime}(x_{0})\right)
^{2}\right]  ^{2}}w^{\prime}( x_{0}) +\frac{w^{\prime}(x_{0})k }{\left[
1+\left(  w^{\prime}(x_{0})\right)  ^{2}\right]  ^{2}}\ell_{2}\\
&  \quad+\frac{w^{\prime}(x_{0})w^{\prime\prime}( x_{0}) }{1+\left(
w^{\prime}(x_{0})\right)  ^{2}}=0,
\end{align*}
and so we can conclude, as before, that $\partial_{x}^{2}g$ exists and is
continuous at $(s_{0},x_{0})$.
\end{proof}

\section*{Acknowledgements}

The authors acknowledge the Center for Nonlinear Analysis (NSF PIRE
Grant No.\ OISE-0967140) where part of this work was carried out. The research
of I.~Fonseca was partially funded by the National Science Foundation under
Grant No.\ DMS-1411646 and that of G.~Leoni under Grant No.\ DMS-1412095. 
M.G.~Mora acknowledges support by the European Research Council under Grant No.\ 290888.
The authors would like to thank Bob Pego for his helpful insights.

\Addresses

\end{document}